\numberwithin{equation}{section}
\newtheorem{theorem}[equation]{Theorem}
\newtheorem{proposition}[equation]{Proposition}
\newtheorem{lemma}[equation]{Lemma}
\newtheorem{corollary}[equation]{Corollary}
\newtheorem{maintheorem}{Theorem}
\theoremstyle{definition}
\newtheorem{remark}[equation]{Remark}
\newtheorem{example}[equation]{Example}
\newtheorem{definition}[equation]{Definition}
\newcommand{\cA}{\mathcal{A}}
\newcommand{\cC}{\mathcal{C}}
\newcommand{\cF}{\mathcal{F}}
\newcommand{\rK}{\mathrm{K}}
\newcommand{\rN}{\mathrm{N}}
\newcommand{\bR}{\mathbf{R}}
\newcommand{\bS}{\mathbf{S}}
\newcommand{\fS}{\mathfrak{S}}
\newcommand{\bT}{\mathbf{T}}
\newcommand{\fT}{\mathfrak{T}}
\newcommand{\fn}{\mathfrak{n}}
\newcommand{\fp}{\mathfrak{p}}
\newcommand{\fq}{\mathfrak{q}}
\newcommand{\arxiv}[1]{\href{http://arxiv.org/abs/#1}{{\tiny\tt arXiv:#1}}}
\newcommand{\DOI}[1]{\href{http://doi.org/#1}{\color{purple}{\tiny\tt DOI:#1}}}
\newcommand{\defn}[1]{\emph{#1}}
\newcommand{\myuline}[1]{%
  \uline{\phantom{#1}}%
  \llap{\contour{white}{#1}}%
}
\DeclareMathOperator{\uRep}{\text{\myuline{\rm Rep}}}
\DeclareMathOperator{\uPerm}{\ul{Perm}}
\let\ul\underline
\renewcommand{\phi}{\varphi}
\DeclareMathOperator{\End}{End}
\DeclareMathOperator{\Aut}{Aut}
\DeclareMathOperator{\Hom}{Hom}
\DeclareMathOperator{\uHom}{\ul{Hom}}
\DeclareMathOperator{\Rep}{Rep}
\DeclareMathOperator{\Spec}{Spec}
\DeclareMathOperator{\Et}{Et}
\DeclareMathOperator{\im}{im}
\DeclareMathOperator{\Ind}{Ind}
\DeclareMathOperator{\Res}{Res}
\DeclareMathOperator{\udim}{\ul{dim}}
\newcommand{\id}{\mathrm{id}}
\renewcommand{\Vec}{\mathrm{Vec}}
\newcommand{\GL}{\mathbf{GL}}
\newcommand{\op}{\mathrm{op}}
\newcommand{\bone}{\mathbf{1}}
\newcommand{\GG}{\mathbb{G}}
\let\wa\bullet
\let\wb\circ
\newcommand{\bb}{{\bullet}}
\newcommand{\ww}{{\circ}}
\title[Simple algebras in the Delannoy category]{Classification of simple commutative algebras \\ in the Delannoy category}
\author{Pavel Etingof}
\address{Department of Mathematics, MIT, Cambridge, MA 02139, USA}
\email{\href{mailto:etingof@math.mit.edu}{etingof@math.mit.edu}}
\urladdr{\url{https://math.mit.edu/~etingof/}}
\thanks{PE was partially supported by NSF grants DMS-2001318 and DMS-2502467}
\author{Andrew Snowden}
\address{Department of Mathematics, University of Michigan, Ann Arbor, MI, USA}
\email{\href{mailto:asnowden@umich.edu}{asnowden@umich.edu}}
\urladdr{\url{http://www-personal.umich.edu/~asnowden/}}
\thanks{AS was supported by NSF grant DMS-2301871.}
\date{June 25, 2026}
\begin{document}

\begin{abstract}
The Delannoy category is an interesting pre-Tannakian category associated to the oligomorphic group $\GG$ of automorphisms of the totally ordered set $(\bR, <)$. By construction, it admits some obvious simple commutative algebras, corresponding to certain transitive $\GG$-sets. We show that these account for all of the simple commutative algebras in the Delannoy category. Previous results of this kind have been limited to interpolation categories; since the Delannoy category cannot be obtained by interpolation, new methods are required.
\end{abstract}

\maketitle
\tableofcontents

\section{Introduction}

Pre-Tannakian categories are a natural class of tensor categories generalizing representation categories of algebraic groups (see \S \ref{ss:tencat} for the definition). One of the first interesting examples is Deligne's category $\uRep(\fS_t)$, which is obtained by ``interpolating'' the representation categories of finite symmetric groups \cite{Deligne}. More recently, a number of quite different examples have been constructed using the oligomorphic group approach introduced in \cite{repst}. Of these, the most prominent is the Delannoy category, studied in depth in \cite{line}.

Each pre-Tannakian category provides a whole ``world'' in which one can do algebra: one can consider commutative algebras, Azumaya algebras, Lie algebras, etc., internal to such a category. Classifying various kinds of algebraic objects like these is a natural problem. To date, progress has been limited to Deligne's category $\uRep(\fS_t)$ \cite{HarmanKalinov, Sciarappa}; here, one can use the interpolation description to essentially reduce the problem to questions about finite symmetric groups\footnote{It is possible to study other interpolation categories in the same manner, e.g., one can show that the only simple commutative algebra in $\uRep(\GL_t)$ is the trivial one.}. In this paper, we establish the first results in this direction outside of the interpolation setting: we give a complete classification of simple commutative algebras in the Delannoy category.

\subsection{Statement of results}

Let $\GG=\Aut(\bR,<)$ be the group of order-preserving self-bijections of the real numbers, and fix an algebraically closed field $k$. Let $\uRep(\GG)$ denote the Delannoy category over $k$; see \S \ref{s:delannoy}. The set $\bR^{(n)}$ of tuples $(x_1, \ldots, x_n) \in \bR^n$ with $x_1<\cdots<x_n$ carries a natural transitive action of $\GG$, and the basic objects of the Delannoy category are the Schwartz spaces $\cC(\bR^{(n)})$; see \S \ref{ss:schwartz}. By construction, these spaces are naturally commutative algebras, and as such, are simple and \'etale. There are no other obvious examples of simple algebras, so it is reasonable to suspect that they exhaust the class. This is our main result:

\begin{maintheorem} \label{mainthm}
Any simple commutative algebra in $\uRep(\GG)$ is isomorphic to some $\cC(\bR^{(n)})$.
\end{maintheorem}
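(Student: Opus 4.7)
The plan is to associate to any simple commutative algebra $A$ in $\uRep(\GG)$ a transitive $\GG$-set $Y$ with $A \cong \cC(Y)$; since every transitive $\GG$-set is (up to isomorphism) of the form $\bR^{(n)}$, the theorem would follow. This mirrors the classical correspondence between simple commutative algebras over an algebraically closed field and points of a scheme, adapted to the internal categorical setting.

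The first task is a structural reduction. Simplicity of $A$ forces $\Hom_{\uRep(\GG)}(\bone, A) = k$, since any nontrivial idempotent in this ring would split $A$ as a product. Next I would show that $A$ is \emph{étale}, i.e., the multiplication $\mu \colon A \otimes A \to A$ admits a section as an $A$-module map. For such algebras, $\ker(\mu)$ is cut out by an idempotent complement, and this rigid structure will drive the subsequent analysis. The étale hypothesis should follow from simplicity together with a Tannakian-style argument: any nilpotent ideal in $A$ would give a non-trivial quotient, and positivity/separability features of $\uRep(\GG)$ rule out infinitesimal structure.

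The crucial step is to produce a nonzero algebra homomorphism between $A$ and some $\cC(\bR^{(n)})$. Since the basic objects $\cC(\bR^{(n)})$ tensor-generate $\uRep(\GG)$, there must exist nonzero morphisms $\cC(\bR^{(n)}) \to A$ for some $n$; the difficulty is to upgrade such a morphism to an algebra map. The plan is to introduce the functor of points $h_A(X) = \Hom_{\text{alg}}(A, \cC(X))$ on transitive $\GG$-sets and show it is representable by some $\bR^{(m)}$. Representability can be pursued by combining étaleness with an analysis of idempotents in $A \otimes \cC(\bR^{(n)})$ corresponding to ``graphs'' of algebra maps: an étale algebra is, morally, locally isomorphic to copies of the unit, and isolating a single component on a basic object should yield the desired map. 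Once an algebra map $f \colon A \to \cC(\bR^{(m)})$ is obtained, injectivity follows from simplicity of $A$ (the kernel is a proper ideal and hence zero), while surjectivity follows because $\cC(\bR^{(m)})$ is itself simple as an algebra.

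The main obstacle is the construction of the algebra map. In the interpolation setting of Deligne's $\uRep(\fS_t)$, such algebra maps can be tracked through the finite symmetric groups; here no such reduction is available, so one must work intrinsically inside the Delannoy category. The required tool is likely a careful analysis of the ideal structure in tensor products $\cC(\bR^{(m)}) \otimes \cC(\bR^{(n)})$, whose summands are indexed by orbits of $\GG$ on pairs of ordered tuples (equivalently, by shuffles of the two tuples), combined with the rigidity afforded by the order-preserving structure of $\GG$. In particular, one likely needs a new criterion for when a $\GG$-stable subobject of $\cC(\bR^{(n)})$ is a subalgebra, capturing the geometric content of algebra structures inside $\uRep(\GG)$ without appeal to an interpolation mechanism.
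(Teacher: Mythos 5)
Your overall architecture (reduce to showing $A$ is a subalgebra of some $\cC(\bR^{(n)})$, then classify such subalgebras) matches the paper's, but three steps have genuine gaps.

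\textbf{\'Etaleness.} You propose to establish early on that $A$ is \'etale via ``a Tannakian-style argument'' that rules out infinitesimal structure. This does not work as stated. The Delannoy category contains plenty of sub\'etale algebras that are \emph{not} \'etale --- $L_{\wa}\oplus\bone$ inside $\cC(\bR)$ is one --- so \'etaleness is genuinely delicate here. There is no general semi-simplicity argument that a simple algebra is \'etale (the paper only proves a direct statement of this kind in characteristics $0$ and $2$, via a dimension count, and explicitly notes it is unused in the main argument). What the paper actually does is the reverse: it first shows $A$ embeds into some $\cC(\bR^{(n)})$ and then deduces \'etaleness of $A$ from the fact that, in a semi-simple pre-Tannakian category, a simple subalgebra of an \'etale algebra is \'etale (a Frobenius-form argument using a map $\lambda\colon\cC(\bR^{(n)})\to\bone$ with nonzero restriction to $A$). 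So \'etaleness is a \emph{consequence} of the embedding, not an ingredient used to produce it.

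\textbf{Constructing the algebra map.} This is the crux, and your proposal does not give a working mechanism. Representability of $h_A$ and ``isolating a component of $A\otimes\cC(\bR^{(n)})$'' would suffice, but you do not say how to get started; in particular, no analysis of the ideal structure of $\cC(\bR^{(m)})\otimes\cC(\bR^{(n)})$ alone will produce an algebra map out of an abstract simple $A$. The paper's key new idea is to restrict $A$ to the open subgroup $\GG(0)\cong\GG\times\GG$, use the explicit restriction rule for Delannoy simples to extract ideals $\fp,\fq$ with $\fp\fq=0$, and argue by a careful descent that after restricting to a small enough open subgroup $U\cong\GG^s$, the algebra acquires a quotient in which no nontrivial Delannoy simples appear, i.e.\ a quotient isomorphic to $\bone$. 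By Frobenius-adjunction $\Res^{\GG}_U\dashv\Ind_U^{\GG}$, an algebra map $\Res^{\GG}_U(A)\to\bone$ produces an algebra map $A\to\cC(\GG/U)$, which is injective since $A$ is simple. Your sketch has no analogue of this descent step and it is exactly what the earlier interpolation methods do not provide.

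\textbf{Surjectivity.} You assert that once you have an algebra map $f\colon A\to\cC(\bR^{(m)})$, surjectivity follows ``because $\cC(\bR^{(m)})$ is itself simple.'' This is a non sequitur: simplicity of the target controls its \emph{ideals}, not its subalgebras, and $\operatorname{im}(f)$ is a subalgebra. Indeed $\cC(\bR)\hookrightarrow\cC(\bR^{(2)})$ (via projection $\bR^{(2)}\to\bR$) is a non-surjective map between simple algebras. The correct endgame is the one the paper uses: classify \emph{all} simple subalgebras of $\cC(\bR^{(n)})$ by translating \'etale subalgebras into $\GG$-stable equivalence relations on $\bR^{(n)}$ (E-idempotents), so that a simple subalgebra is forced to be $\cC(\bR^{(n)}/R)\cong\cC(\bR^{(m)})$ for some quotient $\bR^{(m)}$ of $\bR^{(n)}$.
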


We also prove two ``relative'' classification results that generalize Theorem~\ref{mainthm}. Let $\fT$ be an arbitrary pre-Tannakian category. We classify certain types of algebras in the Deligne tensor product $\uRep(\GG) \boxtimes \fT$, in terms of information about $\fT$. The first result deals with \'etale algebras, and requires no assumptions on $\fT$.

\begin{maintheorem} \label{mainthm2}
A simple commutative \'etale algebra in $\uRep(\GG) \boxtimes \fT$ is isomorphic to $\cC(\bR^{(n)}) \boxtimes E$, for some $n$ and some simple commutative \'etale algebra $E$ in $\fT$.
\end{maintheorem}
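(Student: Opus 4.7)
The strategy is to bootstrap Theorem~\ref{mainthm} by proving a general factorization statement: any simple commutative \'etale algebra $A$ in a Deligne tensor product $\cC_1 \boxtimes \cC_2$ of pre-Tannakian categories has the form $A \cong A_1 \boxtimes A_2$ with each $A_i$ a simple commutative \'etale algebra in $\cC_i$. Granting this for $\cC_1 = \uRep(\GG)$ and $\cC_2 = \fT$, the $\fT$-factor becomes the claimed $E$, and Theorem~\ref{mainthm} immediately identifies the $\uRep(\GG)$-factor with some $\cC(\bR^{(n)})$.

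The conceptual route to the factorization goes through the \'etale Galois theory of pre-Tannakian categories. To any such $\cC$ one can attach a profinite fundamental groupoid $\pi_1(\cC)$ so that the category of commutative \'etale algebras in $\cC$ is anti-equivalent to finite continuous $\pi_1(\cC)$-sets, with simple algebras corresponding to transitive sets. This description does not require a fiber functor, so it applies to the arbitrary pre-Tannakian $\fT$. One then shows that $\pi_1$ is multiplicative under Deligne tensor product, $\pi_1(\cC_1 \boxtimes \cC_2) \cong \pi_1(\cC_1) \times \pi_1(\cC_2)$, and invokes the elementary fact that every transitive $G_1 \times G_2$-set is a product of a transitive $G_1$-set and a transitive $G_2$-set. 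Transporting back across the equivalence yields $A \cong A_1 \boxtimes A_2$.

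A more hands-on alternative avoids setting up the full Galois machinery. The \'etale hypothesis provides a separability idempotent $e \in A \otimes A$ (a section of the multiplication $A \otimes A \to A$ as $A$-bimodules). Decomposing $A$ as a sum of pure tensors $X_i \boxtimes Y_i$ in $\cC_1 \boxtimes \cC_2$, one analyzes how $e$ sits inside $A \otimes A$; the simplicity of $A$ forces $e$ to be ``indecomposable'' in an appropriate sense, which in turn forces the decomposition of $A$ itself to collapse to a single pure tensor $A_1 \boxtimes A_2$. The algebra structure on $A$ then restricts to make $A_1$ and $A_2$ algebras in their respective factors, and the simplicity and \'etaleness of $A$ transfer to each factor by symmetry.

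The main obstacle in either approach is the factorization statement itself; everything afterwards is a direct application of Theorem~\ref{mainthm}. In the Galois-theoretic route the technical heart is the Künneth-type identification of fundamental groupoids under Deligne tensor product, which must be proved carefully since $\fT$ may fail to be Tannakian. In the direct route the difficulty lies in controlling how algebra structure interacts with the fact that a general object of $\cC_1 \boxtimes \cC_2$ is only a \emph{sum} of pure tensors, and the separability idempotent is the essential tool for pinning down how this sum collapses under the simplicity hypothesis.
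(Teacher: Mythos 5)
Your central ``general factorization statement'' --- that any simple commutative \'etale algebra in an arbitrary Deligne tensor product $\cC_1 \boxtimes \cC_2$ factors as $A_1 \boxtimes A_2$ --- is false, and so is the ``elementary fact'' you invoke to prove it. Not every transitive $G_1 \times G_2$-set is a product of a transitive $G_1$-set and a transitive $G_2$-set: a transitive $G_1 \times G_2$-set corresponds to an open subgroup of $G_1 \times G_2$, which need not be of the form $U_1 \times U_2$. Concretely, take $\cC_1 = \cC_2 = \Rep(\bZ/2)$ over a field of characteristic $\ne 2$; the function algebra on $(\bZ/2 \times \bZ/2)/\Delta$ (quotient by the diagonal subgroup) is a simple \'etale algebra in $\cC_1 \boxtimes \cC_2 = \Rep(\bZ/2 \times \bZ/2)$ whose underlying object is $\bone \oplus (\chi \boxtimes \chi)$, and a quick check of the four candidate box products shows this does not factor. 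The same example refutes your separability-idempotent variant, since that argument as sketched would apply to any Deligne tensor product. Both routes fail for the same reason: you are trying to prove something that is not true at that level of generality.

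What the paper actually does exploits two inputs you have not supplied. First, the paper needs the hard \emph{embedding} step: a simple \'etale algebra $A$ in $\uRep(\GG) \boxtimes \fT$ embeds into some $\cC(\bR^{(n)}) \boxtimes E$ (Corollary~\ref{cor:key}); this is the content of the lengthy restriction argument of \S\ref{s:proof}, and without it the Galois dictionary has nothing to bite on. Second, and decisively, the paper isolates a special group-theoretic property of $\GG = \Aut(\bR,<)$ --- being \emph{split} (\S\ref{ss:split}) --- which is exactly the statement that transitive $\GG \times H$-sets factor as products, for every pro-oligomorphic $H$. This is proved for $\GG$ by showing transitive $\GG$-sets have trivial automorphism groups (Proposition~\ref{prop:del-split}), and it is this property, not a general K\"unneth formula for $\pi_1$, that makes the factorization go through (Propositions~\ref{prop:split-quot} and~\ref{prop:split-etale-alg}). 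Your outline correctly identifies the Galois-theoretic framework of \cite{pregalois,discrete} and correctly notes that Theorem~\ref{mainthm} finishes the job once a factorization is in hand, but the proposed route omits both the embedding and the splitness argument, and the missing content is precisely where the theorem lives.
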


The second generalization looks at simple algebras, and requires some assumptions.

\begin{maintheorem} \label{mainthm3}
Suppose $\fT$ is semi-simple and all its simple commutative algebras are \'etale. Then any simple commutative algebra in $\uRep(\GG) \boxtimes \fT$ is \'etale, and therefore of the form $\cC(\bR^{(n)}) \boxtimes E$, as above.
\end{maintheorem}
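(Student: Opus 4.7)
After Theorem~\ref{mainthm2}, the content of the statement reduces to showing that any simple commutative algebra $A$ in $\uRep(\GG)\boxtimes\fT$ is étale; the identification with $\cC(\bR^{(n)})\boxtimes E$ then follows at once. My plan is to adapt the ``simple implies étale'' step that must already underlie the proof of Theorem~\ref{mainthm} to the relative setting over $\fT$, invoking the hypotheses on $\fT$ precisely where the absolute argument relies on $\Vec$ being trivial.

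Since $\uRep(\GG)$ is semi-simple (from prior work on the Delannoy category) and $\fT$ is semi-simple by hypothesis, the Deligne product $\uRep(\GG)\boxtimes\fT$ is semi-simple as well. I would first extract an étale base inside $A$: the ``invariants'' object $E := \uHom_{\uRep(\GG)\boxtimes\fT}(\bbone\boxtimes\bbone,\, A)\in\fT$ carries a natural commutative algebra structure inherited from $A$, and simplicity of $A$ should force $E$ to be connected, because a non-trivial idempotent in $E$ would yield a non-trivial decomposition $A\cong e_1 A\oplus e_2 A$ as algebras in $\uRep(\GG)\boxtimes\fT$. Combined with the hypothesis that simple commutative algebras in $\fT$ are étale, $E$ is then a simple étale algebra in $\fT$. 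Passing to the category of $E$-modules inside $\uRep(\GG)\boxtimes\fT$ reduces the problem to the case $E=\bbone$, at the cost of replacing $\fT$ by a new pre-Tannakian category $\fT_E$ which is still semi-simple and for which simple commutative algebras remain étale.

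In this reduced situation, I would rerun the core construction from the proof of Theorem~\ref{mainthm}: produce a non-zero morphism $f:A\to\cC(\bR^{(n)})\boxtimes\bbone$ for some minimal $n$ (read off from the support of $A$ in the $\uRep(\GG)$-direction), upgrade it via simplicity of $A$ to an algebra embedding, and use the explicit orbit combinatorics of $\GG$ acting on $\bR^{(n)}\times\bR^{(m)}$ together with the algebra structure to force $f$ to be an isomorphism. Since $\cC(\bR^{(n)})\boxtimes\bbone$ is étale, so is $A$, completing the proof.

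The step I expect to be the main obstacle is this last one: transporting the Theorem~\ref{mainthm} argument to a general $\fT_E$ requires verifying that its geometric inputs (chiefly the morphism spaces between Schwartz objects in the Delannoy category and the combinatorics of $\GG$-orbits on products) are unaffected by Deligne tensoring with a semi-simple pre-Tannakian category, and, more delicately, that the scalar manipulations appearing in the argument remain valid when scalars live in $\fT_E$ rather than in $k$. The hypothesis that simple commutative algebras in $\fT_E$ are étale is precisely what makes these manipulations go through, by ensuring that any potential obstructions arising from commutative subalgebras of the coefficient category are in fact invertible.
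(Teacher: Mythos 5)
Your overall plan diverges substantially from the paper, and there is a real gap at its foundation. You want to show that $E:=\uHom_{\uRep(\GG)}(\bbone, A)$ is a simple commutative algebra in $\fT$ (so that the hypothesis on $\fT$ yields $E$ \'etale), and you justify this by observing that a non-trivial idempotent in $E$ would split $A$. That only shows $E$ is \emph{connected}. Connected does not imply simple: a finite-length commutative algebra in a semi-simple tensor category can have no non-trivial idempotents yet still have a non-trivial nilradical (e.g.\ $\bbone\oplus\chi$ with $\chi^2=0$ in $\Rep(\bZ/2)$). Without simplicity of $E$, the hypothesis on $\fT$ gives you nothing, and you cannot pass to the module category $\fT_E$ — that construction and the claim that it is again pre-Tannakian and inherits the ``simple $\Rightarrow$ \'etale'' property all presuppose $E$ \'etale. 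The paper does prove that the $\uRep(\GG)$-invariant subalgebra of $A$ is \'etale (Proposition~\ref{prop:et-invar}), but only under the hypothesis that $A$ itself is already \'etale, which is exactly what you are trying to establish; so the reduction is circular.

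The paper's route is different and cleaner. The core technical input is Theorem~\ref{thm:key}, which is stated and proved \emph{directly for a general semi-simple $\fT$} (the semi-simple branch, Proposition~\ref{prop:step2-ss}, uses only Corollary~\ref{cor:Gamma-field}, Corollary~\ref{cor:simple-Frob}, and the structure of $\Gamma(\Res^{\GG}_{\GG(0)}A)$ as a Frobenius algebra — no \'etaleness of $A$ is assumed). This produces an open $U$ and a quotient $\Res^{\GG}_U(A)\twoheadrightarrow\bbone\boxtimes E$ with $E$ a \emph{simple} algebra in $\fT$, whence by adjunction an injection $A\hookrightarrow\cC(\bR^{(n)})\boxtimes E$. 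Only at this point does the hypothesis on $\fT$ enter: $E$ simple implies $E$ \'etale, so $\cC(\bR^{(n)})\boxtimes E$ is \'etale and $A$ is sub\'etale. Then, since $\uRep(\GG)\boxtimes\fT$ is semi-simple, Corollary~\ref{cor:simple-sub} (simple sub\'etale $\Rightarrow$ \'etale, proved by a splitting argument and Proposition~\ref{prop:etale-summand}) forces $A$ \'etale. This ``sub\'etale plus semi-simple implies \'etale'' step is the key closure move and is absent from your proposal; it is also what makes ``rerunning'' the Theorem~\ref{mainthm} argument unnecessary. If you want to salvage your outline, the missing ingredients are (i) a proof that $E$ is simple, which I do not see how to get without effectively reproving étaleness of $A$, and (ii) an explicit appeal to Corollary~\ref{cor:simple-sub} rather than an isomorphism onto $\cC(\bR^{(n)})\boxtimes\bbone$, which in general will not exist (the correct target object need not be $\bbone$ in the $\fT$-slot).
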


For example, Theorem~\ref{mainthm3} shows that the simple commutative algebras in $\uRep(\GG) \boxtimes \uRep(\GG)$ are of the form $\cC(\bR^{(n)}) \boxtimes \cC(\bR^{(m)})$.

\begin{remark}
Let $\fT$ be a pre-Tannakian category and let $\Et(\fT)$ be the category of \'etale algebras in $\fT$. In \cite{discrete} it is shown that $\Et(\fT)^{\op}$ is a pre-Galois category, and thus, by the main result of \cite{pregalois}, it is equivalent to the category $\bS(G)$ of finitary smooth $G$-sets for some pro-oligomorphic group $G$. In \cite{discrete}, the \defn{oligomorphic fundamental group} of $\fT$ is defined to be this $G$ (see \S \ref{ss:oligo-fund} for details). Theorem~\ref{mainthm} can be rephrased as saying that the oligomorphic fundamental group of $\uRep(\GG)$ is $\GG$.
\end{remark}

\begin{remark}
Our assumption that $k$ is algebraically closed can be relaxed. If $k$ is perfect then the simple commutative algebras in $\uRep(\GG)$ have the form $K \otimes \cC(\bR^{(n)})$, where $K/k$ is a finite field extension. This statement can be deduced from Theorem~\ref{mainthm} using descent theory; note that the algebras $\cC(\bR^{(n)})$ have trivial automorphism groups, and so there are no twisted forms. Theorems~\ref{mainthm2} and~\ref{mainthm3} hold as written for arbitrary $k$; note, however, that the assumption in Theorem~\ref{mainthm3} that simple algebras in $\fT$ are \'etale forces $k$ to be perfect. We assume $k$ is algebraically closed merely to simplify the exposition.
\end{remark}

\subsection{Sketch of proof}

We explain the proof of Theorem~\ref{mainthm}. Let $A$ be a simple commutative algebra in $\uRep(\GG)$. The main step in the proof is to show that there is some open subgroup $U$ of $\GG$ such that the restriction of $A$ to $\uRep(U)$ admits an algebra homomorphism to $\bone$, the trivial algebra. Given this, by adjunction, we obtain an algebra homomorphism $A \to \cC(\GG/U)$ in $\uRep(\GG)$, which is necessarily injective since $A$ is simple. Note that $\GG/U$ is isomorphic to $\bR^{(n)}$ for some $n$. We are then reduced to showing that a simple subalgebra of $\cC(\bR^{(n)})$ is of the form $\cC(\bR^{(m)})$. To do this, we first show that such algebras are \'etale (see \S \ref{ss:et-sub}), and then appeal to a general classification of \'etale subalgebras of \'etale algebras from \cite{discrete}. Ultimately, this shows that $A$ is isomorphic to $\cC(X)$, where $X$ is a quotient $\GG$-set of $\bR^{(n)}$, and such an $X$ must have the form $\bR^{(m)}$ by the classification of $\GG$-sets.

We now say a few words about the main step in the proof, which is carried out in \S \ref{s:proof}. We must produce an open subgroup $U$ and an algebra homomorphism $\Res^{\GG}_U(A) \to \bone$. Instead of doing this in one step, we first find a $U$ and an algebra homomorphism $\Res^{\GG}_U(A) \to B$, where $B$ is an algebra in $\uRep(U)$ that is smaller than $A$, in an appropriate sense. By iterating this procedure, we eventually obtain a homomorphism to $\bone$. This argument is somewhat delicate, and relies on the explicit rule for how simples in the Delannoy category restrict to open subgroups, which was obtained in \cite{line}.

Essentially the same argument is used to prove the first part of Theorem~\ref{mainthm3}; the second part of that theorem follows from Theorem~\ref{mainthm2}. To prove Theorem~\ref{mainthm2}, we establish a special group-theoretic property of $\GG$ (it is \emph{split}, see \S \ref{ss:split}), and then translate the group theory into algebra using the theory of the oligomorphic fundamental group from \cite{discrete}.

\subsection{Tensor category terminology} \label{ss:tencat}

A \defn{tensor category} is an additive $k$-linear category equipped with a symmetric monoidal structure that is $k$-bilinear. We write $\bone$ for the unit of a tensor category (the ``trivial representation'') and $\Gamma$ for the invariants functor, defined by $\Gamma(X)=\Hom(\bone, X)$. A \defn{tensor functor} is a symmetric monoidal $k$-linear functor. We write $X^{\vee}$ for the dual of $X$ and $\udim{X}$ for the categorical dimension of $X$, when $X$ is a rigid object (i.e., it admits a dual). A \defn{pre-Tannakian category} is an abelian tensor category in which all objects have finite length, all $\Hom$ spaces are finite dimensional, all objects are rigid, and $\End(\bone)=k$.

\subsection{Notation}

We list the most important notation:
\begin{description}[align=right,labelwidth=1.9cm,leftmargin=!]
\item[ $k$ ] the coefficient field (algebraically closed in \S \ref{s:proof} and \S \ref{s:proof2})
\item[ $\bone$ ] the tensor unit, or trivial representation
\item[ $\Gamma$ ] the invariants functor $\Hom(\bone, -)$
\item[ $\fT$ ] an arbitrary pre-Tannakian category
\item[ $\GG$ ] the Delannoy group $\Aut(\bR, <)$
\item[ $\uRep(\GG)$ ] the Delannoy category
\end{description}

\section{Algebras in tensor categories} \label{s:alg}

In this section, we establish some general results about algebras in tensor categories. We fix a pre-Tannakian category $\fT$ for the duration of \S \ref{s:alg}. We do not require $k$ to be algebraically closed in \S \ref{s:alg}.

\subsection{Algebras}

By an \defn{algebra} in $\fT$ we will always mean a commutative, unital, and associative algebra. Let $A$ be an algebra in $\fT$. An \defn{ideal} of $A$ is an $A$-submodule of $A$. We say that $A$ is \defn{simple} if it is non-zero and its only ideals are~the zero ideal and the unit ideal. We note that $\Gamma(A)$ is always a finite dimensional $k$-algebra.

\begin{proposition} \label{prop:Gamma-field}
If $A$ is a simple algebra then $\Gamma(A)$ is a finite extension field of $k$.
\end{proposition}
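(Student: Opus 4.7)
The plan is to show that every nonzero element of $\Gamma(A)$ is invertible, which, combined with the fact (already noted in the statement above the proposition) that $\Gamma(A)$ is a finite-dimensional commutative $k$-algebra, immediately gives that $\Gamma(A)$ is a finite extension field of $k$ (the inclusion $k = \Gamma(\bone) \hookrightarrow \Gamma(A)$ comes from the unit map $u\colon \bone\to A$, which is nonzero since $A\neq 0$).

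The key construction is ``multiplication by $f$''. For each $f\in\Gamma(A)=\Hom(\bone,A)$, I would consider the morphism
\[
m_f\colon A \;\xrightarrow{\id\otimes f}\; A\otimes A \;\xrightarrow{m}\; A,
\]
where $m$ is the multiplication of $A$ (and I identify $A\otimes\bone\cong A$). Commutativity and associativity of $A$ imply that $m_f$ is a morphism of $A$-modules, so $\ker(m_f)$ and $\im(m_f)$ are ideals of $A$. Since $A$ is simple, each is either $0$ or $A$. There are then two cases: either $\ker(m_f)=A$, in which case $m_f=0$ and so $f=m_f\circ u=0$; or $\ker(m_f)=0$, so $m_f$ is injective. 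In the second case I would use that $A$ has finite length (as an object of the pre-Tannakian category $\fT$, hence also as an $A$-module, since $A$-submodules are subobjects) to conclude that the injective endomorphism $m_f$ is automatically an isomorphism.

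Given a nonzero $f\in\Gamma(A)$, the previous paragraph produces an isomorphism $m_f\colon A\to A$. I would then set $g := m_f^{-1}\circ u\in\Gamma(A)$ and check that $fg=1$ in $\Gamma(A)$. The computation is straightforward from the definitions: $m_f\circ g = m\circ(g\otimes f)=fg$ (using commutativity of $A$), and $m_f\circ g = m_f\circ m_f^{-1}\circ u = u = 1_{\Gamma(A)}$. Hence every nonzero element of $\Gamma(A)$ has a two-sided inverse (two-sided because $\Gamma(A)$ is commutative), so $\Gamma(A)$ is a field.

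There is no real obstacle here; the one subtle point is the passage from ``$m_f$ injective'' to ``$m_f$ an isomorphism'', which rests on $A$ having finite length. Everything else is just the unit/multiplication axioms plus the definition of simplicity (as an algebra) in a tensor-categorical setting.
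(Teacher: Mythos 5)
Your proof is correct and follows essentially the same route as the paper's: construct the multiplication map $m_f$, observe its kernel is an ideal, use simplicity to conclude $m_f$ is injective when $f\neq 0$, and then use finite length to upgrade injectivity to an isomorphism. The only cosmetic difference is that the paper deduces $f$ is a unit by noting $\Gamma(m_f)$ is an isomorphism of $\Gamma(A)$, whereas you explicitly exhibit the inverse $g=m_f^{-1}\circ u$; these are the same computation phrased differently.
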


\begin{proof}
Let $x \in \Gamma(A)$ be non-zero. Let $m_x \colon A \to A$ be the multiplication-by-$x$ map. The kernel of $m_x$ is an ideal of $A$, and is not the unit ideal since it does not contain the unit. It is therefore the zero ideal, since $A$ is simple. Thus $m_x$ is injective, and therefore an isomorphism since $A$ has finite length. It follows that the map $\Gamma(m_x) \colon \Gamma(A) \to \Gamma(A)$ is also an isomorphism, and so $x$ is a unit of the ring $\Gamma(A)$, as required.
\end{proof}

\begin{corollary} \label{cor:Gamma-field}
If $\fT$ is semi-simple, $k$ is algebraically closed, and $A$ is a simple algebra in $\fT$ then the trivial representation appears in $A$ with multiplicity one.
\end{corollary}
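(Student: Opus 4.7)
The proof is very short and assembles three ingredients already in place. My plan is as follows.

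First, I invoke Proposition~\ref{prop:Gamma-field}: since $A$ is simple, $\Gamma(A) = \Hom(\bone, A)$ is a finite field extension of $k$. Because $k$ is algebraically closed, the only such extension is $k$ itself, so $\dim_k \Gamma(A) = 1$.

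Second, I translate this back to multiplicities. Under the semi-simplicity hypothesis, $A$ decomposes as a finite direct sum of simple objects of $\fT$, and for any simple object $S$ the multiplicity of $S$ in $A$ equals $\dim_k \Hom(S, A)$ (using $\End(S) = k$, which holds since $k$ is algebraically closed and $\fT$ is pre-Tannakian). Applying this with $S = \bone$ and using $\Hom(\bone, A) = \Gamma(A)$, the multiplicity of $\bone$ in $A$ equals $\dim_k \Gamma(A) = 1$.

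There is no obstacle here; the corollary is just the combination of Proposition~\ref{prop:Gamma-field} with the standard fact that over an algebraically closed field, in a semi-simple pre-Tannakian category, multiplicities are computed as dimensions of $\Hom$-spaces. The only mild point worth noting in the write-up is that one must use both $k = \overline{k}$ (to force $\Gamma(A) = k$) and semi-simplicity (to pass from the dimension of $\Hom(\bone, A)$ to an actual multiplicity in a decomposition of $A$).
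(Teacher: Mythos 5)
Your proof is correct and is essentially the paper's own argument, just spelled out: the paper simply says ``Indeed, $\Gamma(A)=k$ and so the result follows,'' which is exactly your combination of Proposition~\ref{prop:Gamma-field}, algebraic closedness of $k$, and semi-simplicity to convert $\dim_k\Hom(\bone,A)$ into a multiplicity.
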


\begin{proof}
Indeed, $\Gamma(A)=k$ and so the result follows.
\end{proof}

\begin{remark}
Semi-simplicity is necessary in Corollary~\ref{cor:Gamma-field}. Indeed, suppose $k$ has positive characteristic $p$, $\fT=\Rep(\fS_p)$, and $A$ is the algebra of functions on the $\fS_p$-set $\{1, \ldots, p\}$ (under pointwise multiplication). Then $A$ is a simple algebra (it is \'etale and $\Gamma(A)=k$, see \S \ref{ss:etale} below), but the trivial representation appears in it with multiplicity two (once as a subobject and once as a quotient). There is a similar example in characteristic~0: if $\fT$ is the abelian envelope of $\uRep(\fS_t)$ with $t=0$ then the defining object $A$ is a simple \'etale algebra in which the trivial representation appears with multiplicity two.
\end{remark}

\subsection{Frobenius algebras}

Let $A$ be an algebra in $\fT$ and suppose that $\lambda \colon A \to \bone$ is a map in $\fT$. We then obtain a symmetric bilinear form
\begin{displaymath}
(,)_{\lambda} \colon A \otimes A \to \bone, \qquad
(x,y)_{\lambda}=\lambda(xy).
\end{displaymath}
We say that $(A, \lambda)$ is a \defn{Frobenius algebra} if this pairing is perfect, i.e., it identifies $A$ with its dual.

\begin{proposition} \label{prop:simple-Frob}
Suppose that $A$ is a simple algebra and $\lambda \colon A \to \bone$ is a non-zero map. Then $(,)_{\lambda}$ is perfect, and so $(A, \lambda)$ is a Frobenius algebra.
\end{proposition}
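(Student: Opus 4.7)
The plan is to recast the perfectness of $(,)_\lambda$ as an isomorphism statement about a certain $A$-module map, and then deduce injectivity from simplicity and surjectivity from a length comparison.

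First I would convert the pairing into its adjoint map $\phi_\lambda \colon A \to A^\vee$, characterized by $\phi_\lambda(x)(y) = \lambda(xy)$; perfectness of $(,)_\lambda$ is by definition the statement that $\phi_\lambda$ is an isomorphism. The key structural observation is that $\phi_\lambda$ is in fact a map of $A$-modules, where $A^\vee$ is given the $A$-module structure dual to multiplication on $A$. This is a direct computation using commutativity and associativity: for $a,x,y \in A$ one has $\phi_\lambda(ax)(y) = \lambda(axy) = \phi_\lambda(x)(ay)$, which is exactly the statement that $\phi_\lambda$ is $A$-linear. (In categorical terms, this amounts to the identity between two morphisms $A \otimes A \otimes A \to \bone$ that expresses commutativity of an easy diagram built from the multiplication.)

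Given this, $\ker(\phi_\lambda)$ is an ideal of $A$. Simplicity of $A$ then forces $\ker(\phi_\lambda)$ to be either $0$ or all of $A$. To rule out the latter, I observe that $\phi_\lambda$ is nonzero: composing $\phi_\lambda$ with the evaluation against the unit $\bone \to A$ (i.e., plugging in $y = 1$) recovers $\lambda$ itself, so $\phi_\lambda = 0$ would force $\lambda = 0$, contrary to hypothesis. Hence $\ker(\phi_\lambda) = 0$, and $\phi_\lambda$ is injective.

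For surjectivity, I would use that in the pre-Tannakian category $\fT$ all objects have finite length, and duality is an exact contravariant auto-equivalence, hence preserves length. Therefore $A$ and $A^\vee$ have the same (finite) length, and an injection between two objects of equal finite length in an abelian category is automatically an isomorphism. This gives $\phi_\lambda$ an isomorphism, i.e., the pairing $(,)_\lambda$ is perfect, so $(A, \lambda)$ is Frobenius. I do not expect any serious obstacle here; the only subtlety is checking the $A$-linearity of $\phi_\lambda$ carefully in the categorical setting, but this is routine.
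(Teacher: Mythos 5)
Your proof is correct and follows essentially the same route as the paper: you form the adjoint map $\phi_\lambda \colon A \to A^\vee$ (whose kernel is exactly the ideal $\fn$ the paper considers), show its kernel is an ideal not containing the unit, and invoke simplicity. You are slightly more explicit than the paper about why injectivity of $\phi_\lambda$ gives perfectness (via finite length and length-preservation of duality), but this is the same argument the paper leaves implicit.
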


\begin{proof}
Let $\fn \subset A$ be the kernel of the form $(,)_{\lambda}$, i.e., the maximal subobject of $A$ such that $(,)_{\lambda}$ vanishes identically on $\fn \otimes A$. One easily sees that $\fn$ is an ideal of $A$. It is not the unit ideal since the restriction of $(,)_{\lambda}$ to $\bone \otimes A=A$ is $\lambda$, which is non-zero. Since $A$ is simple, it follows that $\fn=0$, and so $(,)_{\lambda}$ is perfect, as required.
\end{proof}

\begin{remark}
Suppose we are in the context of Proposition~\ref{prop:simple-Frob} and $k$ is algebraically closed. Then $\Gamma(A)=k$ by Proposition~\ref{prop:Gamma-field}, and so $\Hom_{\fT}(A, \bone)=k$ since $A$ is self-dual. It follows that the Frobenius algebra structure on $A$ is unique up to scaling.
\end{remark}

\begin{corollary} \label{cor:simple-Frob}
Suppose that $\fT$ is semi-simple and $A$ is a simple algebra in $\fT$. Then $A$ admits the structure of a Frobenius algebra.
\end{corollary}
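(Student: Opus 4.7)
The plan is to reduce to Proposition~\ref{prop:simple-Frob} by producing a nonzero map $\lambda\colon A\to\bone$. Given such a $\lambda$, the proposition immediately hands us a perfect pairing $(,)_{\lambda}$, hence a Frobenius algebra structure on $A$, which is all we need.

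To produce $\lambda$, I would work with the unit map of the algebra. Let $u\colon\bone\to A$ be the unit. Since $A$ is simple it is in particular nonzero, so $u$ cannot vanish (if $u=0$ then the identity $A\cong\bone\otimes A\xrightarrow{u\otimes\id}A\otimes A\xrightarrow{\mu}A$ is zero, forcing $A=0$). In a pre-Tannakian category $\bone$ is simple because $\End(\bone)=k$, so a nonzero morphism out of $\bone$ is automatically injective. Thus $u$ realizes $\bone$ as a subobject of $A$.

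Now I would invoke semi-simplicity of $\fT$: every short exact sequence splits, so the subobject inclusion $u\colon\bone\hookrightarrow A$ admits a retraction $\lambda\colon A\to\bone$ with $\lambda\circ u=\id_{\bone}$. In particular $\lambda\neq 0$. Applying Proposition~\ref{prop:simple-Frob} to this $\lambda$ shows that $(A,\lambda)$ is a Frobenius algebra, completing the proof.

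There is no real obstacle here; the only point to be careful about is the standard but sometimes implicit fact that $\bone$ is simple in a pre-Tannakian category, which is what lets us upgrade ``$u$ nonzero'' to ``$u$ a subobject inclusion'' before deploying semi-simplicity.
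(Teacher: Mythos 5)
Your proposal is correct and matches the paper's proof essentially verbatim: both split the unit morphism $\bone\to A$ using semi-simplicity to obtain a nonzero $\lambda\colon A\to\bone$, then invoke Proposition~\ref{prop:simple-Frob}. The only difference is that you spell out why $u$ is a monomorphism, a detail the paper leaves implicit.
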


\begin{proof}
The unit morphism $\bone \to A$ splits since $\fT$ is semi-simple, and so there exists some non-zero map $\lambda \colon A \to \bone$.
\end{proof}

\subsection{\'Etale algebras} \label{ss:etale}

Let $A$ be an algebra in $\fT$. Let $\ul{\End}(A)=A^{\vee} \otimes A$ be the internal endomorphism algebra of $A$. There is a natural map $A \to \ul{\End}(A)$, coming from the action of $A$ on itself. Composing this with the trace map $\ul{\End}(A)\to \bone$, we obtain a map
\begin{displaymath}
\epsilon_A \colon A \to \bone
\end{displaymath}
called the \defn{trace map} for $A$. The \defn{trace form} on $A$ is the symmetric bilinear form $(,)_{\epsilon_A}$. We say that $A$ is \defn{\'etale} if the trace form is a perfect pairing. In other words, $A$ is \'etale if it is a Frobenius algebra with respect to the trace map. In what follows, we recall some basic facts about \'etale algebras.

\textit{(a) Permanence properties.} The class of \'etale algebras is closed under direct products and tensor products \cite[\S 4.2]{discrete}. Any quotient of an \'etale algebra is again \'etale \cite[Corollary~5.2]{discrete}. In particular, a direct factor of an \'etale algebra is again \'etale.

\textit{(b) Invariant algebras.} If $A$ is an \'etale algebra in $\fT$ then $\Gamma(A)$ is a finite \'etale $k$-algebra \cite[Corollary~5.3]{discrete}; explicitly, this means that $\Gamma(A)$ is a finite product of finite separable extensions of $k$. Moreover, ideals of $\Gamma(A)$ correspond bijectively to ideals of $A$, via $I \mapsto IA$ \cite[Corollary~5.2]{discrete}. This has two notable consequences. First, an \'etale algebra $A$ is simple if and only if $\Gamma(A)$ is a field. And second, an \'etale algebra $A$ decomposes into a product of simple \'etale algebras; the simple factors of $A$ correspond to the primitive idempotents of $\Gamma(A)$. This decomposition is unique (up to permuting factors).

\textit{(c) Simple algebras.} Let $A$ be a simple algebra. Then $A$ is \'etale if and only if the trace map $\epsilon_A$ is not identically zero. Indeed, if $A$ is \'etale then clearly $\epsilon_A$ is non-zero; conversely, if $\epsilon_A$ is non-zero then $A$ is \'etale by Proposition~\ref{prop:simple-Frob}. In particular, if $\udim(A)$ is non-zero then $A$ is \'etale, as $\udim(A)=\epsilon_A(1)$.

The final statement has a partial converse: if $\fT$ is semi-simple, $k$ is algebraically closed, and $A$ is a simple \'etale algebra then $\udim(A)$ is non-zero. Indeed, the trivial representation appears in $A$ with multiplicity one (Corollary~\ref{cor:Gamma-field}), and so the trace map $\epsilon_A$ must be the left inverse of the unit, up to a non-zero scalar. Thus $\udim(A) = \epsilon_A(1) \ne 0$.

\textit{(d) External products.} Let $\fT'$ be a second pre-Tannakian category, and let $A$ be an algebra of $\fT$. Then $B=A \boxtimes \bone$ is an algebra of $\fT \boxtimes \fT'$. One easily sees that $\epsilon_B=\epsilon_A \boxtimes \id$, from which it follows that $A$ is \'etale if and only if $B$ is \'etale.

Let $A$ be an \'etale algebra of $\fT$, and let $A'$ be an \'etale algebra of $\fT'$. Then $A \boxtimes A'$ is \'etale, as it is the tensor product of the \'etale algebras $A \boxtimes \bone$ and $\bone \boxtimes A'$. We have
\begin{displaymath}
\Gamma(A \boxtimes A') = \Gamma(A) \otimes_k \Gamma(A')
\end{displaymath}
by \cite[Prop.~1.11.2]{EGNO}. In particular, if $A$ is absolutely simple, i.e., $\Gamma(A)=k$, and $A'$ is simple then $\Gamma(A \boxtimes A')$ is a field, and so $A \boxtimes A'$ is simple.

\subsection{Sub\'etale algebras} \label{ss:et-sub}

We say that an algebra in a pre-Tannakian category $\fT$ is \defn{sub\'etale} if it is isomorphic to a subalgebra of an \'etale algebra. In $\Vec$, a sub\'etale algebra is \'etale: indeed, an \'etale algebra is a product of finite separable extensions of $k$, and a subextension of a separable extension is separable. It follows that in classical representation categories, and their interpolation categories (such as Deligne's category), any sub\'etale algebra is \'etale\footnote{To see this in interpolation categories, it is easiest to use the ultraproduct approach \cite{Harman, interp}. For instance, $\uRep(\fS_t)$ can be realized as a subcategory of the ultraproduct of the categories $\Rep(\fS_n)$ as $n$ (and the coefficient field) varies. Suppose $A=(A_n)$ is an \'etale algebra in $\uRep(\fS_t)$, where each $A_n$ is an object of $\Rep(\fS_n)$, and suppose $B=(B_n)$ is a subalgebra of $A$. Then for all $n$ belonging to some set in the ultrafilter, $A_n$ is an \'etale algebra and $B_n$ is a subalgebra, and therefore \'etale; thus $B$ is \'etale.}. We will see, however, that the Delannoy category contains sub\'etale algebras that are not \'etale (Example~\ref{ex:subet}). We now give some criteria for showing that a sub\'etale algebra is in fact \'etale.

Let $B$ be an \'etale algebra of $\fT$, and let $A$ be a subalgebra of $B$. Consider the sequence
\begin{displaymath}
\xymatrix{
0 \ar[r] & A \ar[r]^-i & B \ar[r]^-j & B \otimes_A B, }
\end{displaymath}
where $i$ is the inclusion and $j(x)=x \otimes 1 - 1 \otimes x$. Clearly, $j \circ i=0$.

\begin{proposition} \label{prop:et-sub}
$A$ is \'etale if and only if this sequence is exact, i.e., $\im(i)=\ker(j)$.
\end{proposition}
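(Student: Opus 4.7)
The plan is to treat the two directions of the equivalence separately.

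For the forward direction, assume $A$ is \'etale. Then it admits a separability idempotent $e_A \in A \otimes A$ with $\mu_A(e_A) = 1$ and $(a \otimes 1)e_A = (1 \otimes a)e_A$ for all $a \in A$. Using $e_A$, one can construct explicit contracting homotopies on the Amitsur-type complexes $M \to M \otimes_A B \to M \otimes_A B \otimes_A B$ for $A$-modules $M$, showing that $A \hookrightarrow B$ is faithfully flat in the appropriate categorical sense (note the inclusion is honest, since $\bone \to A \to B$ is nonzero and $A$ is contained in $B$ by hypothesis). The exactness of $0 \to A \to B \to B \otimes_A B$ then follows from the standard descent argument.

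For the reverse direction, assume the sequence is exact. Exploit the self-duality $\Phi \colon B \xrightarrow{\sim} B^\vee$, $b \mapsto \epsilon_B(b \cdot -)$, coming from $B$ being \'etale (\S\ref{ss:etale}). Dualizing the exact sequence and using the identification $(B \otimes_A B)^\vee \cong \End_A(B)$ (via $B \cong B^\vee$) yields an exact sequence
\[
\End_A(B) \xrightarrow{j^\vee} B \longrightarrow A^\vee \longrightarrow 0,
\]
where the map $B \to A^\vee$ sends $b$ to the functional $a \mapsto \epsilon_B(ab)$. The composition $A \hookrightarrow B \twoheadrightarrow A^\vee$ then corresponds to the pairing $(a_1, a_2) \mapsto \epsilon_B(a_1 a_2)$ on $A$. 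A diagrammatic chase, leveraging the Frobenius coproduct on $B$ to analyze $\im(j^\vee)$, shows this composite is an isomorphism, so the restricted pairing is perfect and $A$ becomes a Frobenius algebra with trace $\lambda := \epsilon_B|_A$. Combining this with \S\ref{ss:etale}(c) (and reducing the general case to the simple case via the decomposition of $A$ into a product of simple algebras, which exists once a Frobenius structure is in hand) gives that $A$ is \'etale.

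The main obstacle is the reverse direction: one must carefully leverage the exactness hypothesis to establish the non-degeneracy of the restricted pairing on $A$, and then bridge from the induced Frobenius structure (with trace $\epsilon_B|_A$) to \'etaleness in the sense of the intrinsic trace $\epsilon_A$. The forward direction, by contrast, is standard descent once the separability idempotent is unpacked.
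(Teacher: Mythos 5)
Your forward direction is plausible in spirit (the paper simply cites the proof of \cite[Proposition~5.12]{discrete} for it), though your invocation of ``faithfully flat descent'' via a separability idempotent is vague — the cleaner route is that an \'etale algebra $A$ is self-dual hence self-injective as an $A$-module, so the inclusion $A \hookrightarrow B$ splits as $A$-modules, and then $\ker(j)=A$ follows exactly as in the proof of Proposition~\ref{prop:etale-summand}.

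The reverse direction, however, has a genuine gap. You claim that a diagram chase shows the composite $A \hookrightarrow B \twoheadrightarrow A^{\vee}$ (the restriction to $A$ of the trace pairing of $B$) is an isomorphism. This is false. Take $\fT = \Vec_k$ with $\operatorname{char}(k) = p > 0$, $B = k^p$, and $A = k$ embedded diagonally. The sequence $0 \to A \to B \to B\otimes_A B$ is exact (here $B\otimes_A B = B\otimes_k B$ and $\ker(j) = k = A$), and $A$ is of course \'etale, but $\epsilon_B\vert_A = p\cdot\id = 0$, so the composite $A \to A^{\vee}$ is the zero map. The underlying phenomenon is that for a subalgebra $A\subset B$ the restricted trace form picks up a factor of ``$[B:A]$,'' which can vanish in positive characteristic; thus $\epsilon_B\vert_A$ and $\epsilon_A$ are genuinely different, and your bridge from one to the other never gets off the ground. (The subsequent step — decomposing a Frobenius algebra into a product of simples — is also unjustified: $k[x]/x^2$ is Frobenius but not a product of simples.) The paper avoids all of this by appealing to the E-idempotent machinery of \cite[\S 5.3]{discrete}: the kernel of $j$ is identified with the canonical \'etale subalgebra $B^{\gamma}$ attached to the E-idempotent $\gamma$ corresponding to $\ker(B\otimes B \to B\otimes_A B)$, so exactness forces $A = B^{\gamma}$ and hence $A$ \'etale, with no characteristic assumption.
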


\begin{proof}
This is essentially proved in \cite[\S 5.3]{discrete}, as we now explain. First, if $A$ is \'etale then the sequence is shown to be exact in the proof of \cite[Proposition~5.12]{discrete}. We now prove the converse. Let $I$ be the kernel of $B \otimes B \to B \otimes_A B$. Let $\gamma$ be the unique idempotent of $\Gamma(B \otimes B)$ such that $\ker(\gamma)=I$, i.e., $1-\gamma$ generates $I$; this is an ``E-idempotent'' by \cite[Lemma~5.14]{discrete}. The kernel of $j$ is the subobject $B^{\gamma}$ of $B$ defined in \cite[\S 5.3]{discrete}. By \cite[Proposition~5.12]{discrete}, $B^{\gamma}$ is an \'etale subalgebra of $B$. If the sequence is exact, then $A=B^{\gamma}$ is \'etale.
\end{proof}

In the above proof, if we work in the category of vector spaces, then the vanishing locus $V(I) \subset \Spec(B) \times \Spec(B)$ is an equivalence relation on $\Spec(B)$, and the quotient space is $\Spec(B^{\gamma})$. Essentially the arguments in \cite{discrete} show that similar reasoning applies in general pre-Tannakian categories. We will revisit these ideas in more detail in \S \ref{ss:schwartz}.

\begin{proposition} \label{prop:etale-summand}
If $A$ is a summand of $B$ as an $A$-module then $A$ is \'etale.
\end{proposition}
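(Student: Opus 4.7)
The plan is to reduce the claim to the criterion furnished by Proposition~\ref{prop:et-sub}: it suffices to show that the sequence
\[
0 \to A \xrightarrow{\ i\ } B \xrightarrow{\ j\ } B \otimes_A B, \qquad j(x) = x \otimes 1 - 1 \otimes x,
\]
is exact, i.e.\ that $\ker(j) = A$.

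The hypothesis provides an $A$-linear retraction $\pi \colon B \to A$ with $\pi \circ i = \id_A$, so $B \cong A \oplus M$ as $A$-modules, where $M = \ker(\pi)$. Since the relative tensor product $\otimes_A$ is right exact (hence additive) in each variable, I can distribute it over this direct sum to obtain
\[
B \otimes_A B \;\cong\; (A \otimes_A A) \,\oplus\, (M \otimes_A A) \,\oplus\, (A \otimes_A M) \,\oplus\, (M \otimes_A M) \;\cong\; A \,\oplus\, M_L \,\oplus\, M_R \,\oplus\, (M \otimes_A M),
\]
where $M_L$ and $M_R$ are two copies of $M$ sitting in the middle two summands.

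Now I analyze the restriction of $j$ to each summand of $B = A \oplus M$. On the $A$ summand, $j$ vanishes identically, because the maps $x \mapsto x \otimes 1$ and $x \mapsto 1 \otimes x$ from $A$ into $B \otimes_A B$ agree (one is $A$-linear in the right factor and the other in the left, and both restrict to the unit of $B \otimes_A B$). On the $M$ summand, the two terms $m \otimes 1$ and $1 \otimes m$ land, respectively, in the summand $M_L = M \otimes_A A$ and in the summand $M_R = A \otimes_A M$, via the canonical identifications with $M$; thus the composite of $j|_M$ with the projection to $M_L$ (or to $M_R$) is an isomorphism. Hence $j|_M$ is a split mono, and combining with $j|_A = 0$ gives $\ker(j) = A$, as desired. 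Proposition~\ref{prop:et-sub} then yields that $A$ is \'etale.

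I do not expect any real obstacle here. The only point requiring mild care is the distributivity $\otimes_A$ over $\oplus$ of $A$-modules in an abstract abelian symmetric monoidal category, but this is immediate from right exactness in each variable. Once Proposition~\ref{prop:et-sub} is recognized as the correct tool, the rest is a direct computation in the split decomposition $B = A \oplus M$.
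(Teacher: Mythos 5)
Your proof is correct and follows essentially the same route as the paper: both decompose $B = A \oplus M$, expand $B \otimes_A B$ into the four summands, observe $j|_A = 0$ and that $j|_M$ has components $(\mathrm{id}_M, -\mathrm{id}_M)$ in the two middle summands, and conclude $\ker(j) = \im(i)$ so that Proposition~\ref{prop:et-sub} applies.
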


\begin{proof}
Write $B=A \oplus M$, where $M$ is an $A$-submodule of $B$. We have
\begin{displaymath}
B \otimes_A B = A \oplus M \oplus M \oplus (M \otimes_A M).
\end{displaymath}
In terms of this decomposition, we have $j \vert_M = (0, \id_M, -\id_M, 0)$. Thus $\ker(j \vert_M)=0$, and so $\ker(j)=\im(i)$, as required.
\end{proof}

\begin{proposition} \label{prop:simple-sub}
If $A$ is simple and there is a map $\lambda \colon B \to \bone$ in $\fT$ that has non-zero restriction to $A$ then $A$ is \'etale.
\end{proposition}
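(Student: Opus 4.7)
The plan is to use the existence of $\lambda$ to construct an $A$-linear retraction of the inclusion $i \colon A \hookrightarrow B$, and then invoke Proposition~\ref{prop:etale-summand}. The key input is that $A$, being simple, inherits a Frobenius structure from $\lambda$.

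First, set $\mu = \lambda \circ i \colon A \to \bone$, which is non-zero by hypothesis. Since $A$ is simple, Proposition~\ref{prop:simple-Frob} applies and $(A, \mu)$ is a Frobenius algebra; in particular, the pairing $(x,y)_\mu = \mu(xy)$ induces an isomorphism $\alpha \colon A \xrightarrow{\sim} A^\vee$.

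Next, using the $A$-module structure on $B$ coming from $i$, consider the pairing
\begin{displaymath}
B \otimes A \longrightarrow \bone, \qquad b \otimes a \longmapsto \lambda(i(a) \cdot b),
\end{displaymath}
and let $\psi \colon B \to A^\vee$ be the associated map. Define $\phi \colon B \to A$ as the composite $\alpha^{-1} \circ \psi$. I claim that $\phi$ is $A$-linear and satisfies $\phi \circ i = \id_A$. The retraction property follows because, for $a' \in A$, the image $\psi(i(a'))$ is the functional $a \mapsto \lambda(i(a a')) = \mu(a a')$, which is exactly $\alpha(a')$. For $A$-linearity, one compares $\phi \circ m_{i(a)}$ and $m_a \circ \phi$ after postcomposing with $\alpha$: both send $b$ to the functional $a' \mapsto \lambda(i(a'a) \cdot b)$, and since $\alpha$ is an isomorphism the two maps coincide.

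Thus $i$ admits an $A$-linear retraction, so $A$ is a direct summand of $B$ as an $A$-module, and Proposition~\ref{prop:etale-summand} yields that $A$ is \'etale. There is no real obstacle here beyond carefully packaging the pairing: the only delicate point is checking that the categorical construction of $\phi$ really gives an $A$-module map, but this is a direct calculation using the perfectness of $(,)_\mu$.
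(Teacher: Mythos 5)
Your argument is correct, and it takes a genuinely different route from the paper's. The paper first decomposes $B$ into simple \'etale factors and reduces to the case where $B$ is itself simple; it then invokes Proposition~\ref{prop:simple-Frob} \emph{twice} — once for $B$, to get perfectness of $(,)_\lambda$ on $B$, and once for $A$ — and uses the orthogonal complement decomposition $B = A \oplus A^\perp$. Your proof bypasses the reduction to simple $B$ entirely: you only need perfectness of the form on $A$ (to get the isomorphism $\alpha \colon A \to A^\vee$), and you build the $A$-linear retraction $\phi = \alpha^{-1} \circ \psi$ directly. The kernel of your $\phi$ is of course the orthogonal complement $A^\perp$, so the splittings produced are the same, but your route is more economical — it uses one fewer application of Proposition~\ref{prop:simple-Frob} and avoids the decomposition of $B$ into simple factors. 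Both arguments conclude via Proposition~\ref{prop:etale-summand}. Your element-based verification of $A$-linearity (comparing $\phi \circ m_{i(a)}$ and $m_a \circ \phi$ after postcomposing with $\alpha$) checks out: both composites give $b \mapsto (a' \mapsto \lambda(i(a'a)b))$, using $\mu(c\,\phi(b)) = \lambda(i(c)b)$, which follows directly from the definitions of $\alpha$ and $\psi$. The argument translates cleanly into the categorical setting by the usual formal manipulation of evaluation and coevaluation.
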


\begin{proof}
Let $B=\bigoplus_{i=1}^n B_i$ be the decomposition of $B$ into simple \'etale algebras, and let $\lambda_i$ be the restriction of $\lambda$ to $B_i$. The restriction of $\lambda_i$ along the projection map $A \to B_i$ is non-zero for some $i$. Moreover, the map $A \to B_i$ is injective since $A$ is simple. We may thus replace $B$ with this $B_i$ and thereby assume that $B$ is simple as well.

The pairing $(,)_{\lambda}$ on $B$ is perfect, and restricts to a perfect pairing on $A$; both claims follow from Proposition~\ref{prop:simple-Frob}. Thus $B=A \oplus A^{\perp}$, where $A^{\perp}$ is the orthogonal complement of $A$ under $(,)_{\lambda}$. Since this decomposition is one of $A$-modules, the result follows from Proposition~\ref{prop:etale-summand}.
\end{proof}

\begin{corollary} \label{cor:simple-sub}
If $A$ is simple and $\udim(B) \ne 0$ then $A$ is \'etale.
\end{corollary}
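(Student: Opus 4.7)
The plan is to apply Proposition~\ref{prop:simple-sub} with $\lambda = \epsilon_B$, the trace map of the \'etale algebra $B$. All the hypotheses of that proposition are in place except for the assertion that $\epsilon_B$ has non-zero restriction to $A$; this is the only point to verify.

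To verify it, I would consider the composition $\bone \to A \hookrightarrow B \xrightarrow{\epsilon_B} \bone$, where the first map is the unit of $A$ (which agrees with the unit of $B$ under the inclusion $A \hookrightarrow B$). By definition of the categorical dimension, this composite is $\epsilon_B(1) = \udim(B)$, which is non-zero by hypothesis. Hence the restriction $\epsilon_B\vert_A \colon A \to \bone$ cannot be zero, and Proposition~\ref{prop:simple-sub} yields that $A$ is \'etale.

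There is no real obstacle here; the corollary is essentially a direct specialization of the previous proposition, using the single observation that a non-vanishing dimension forces the trace map to be non-zero on any subalgebra containing the unit.
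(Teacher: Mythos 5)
Your proof is correct and is essentially identical to the paper's: both take $\lambda = \epsilon_B$ in Proposition~\ref{prop:simple-sub} and observe that $\lambda(1) = \udim(B) \ne 0$, so $\lambda$ restricts nontrivially to $A$.
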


\begin{proof}
Take $\lambda=\epsilon_B$. Since $\lambda(1)=\udim(B)$ is non-zero, $\lambda$ has non-zero restriction to $A$.
\end{proof}

\begin{corollary} \label{cor:simple-sub}
If $\fT$ is semi-simple and $A$ is simple then $A$ is \'etale.
\end{corollary}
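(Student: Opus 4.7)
The plan is to combine Corollary~\ref{cor:simple-Frob} with the characterization of \'etale simple algebras in part~(c) of \S\ref{ss:etale}. Since $\fT$ is semi-simple, Corollary~\ref{cor:simple-Frob} furnishes a non-zero morphism $\lambda\colon A\to\bone$ making $(A,\lambda)$ a Frobenius algebra. By \S\ref{ss:etale}(c), the simple algebra $A$ is \'etale exactly when its trace map $\epsilon_A\colon A\to\bone$ is non-zero, so the entire argument reduces to the single non-vanishing $\epsilon_A\ne 0$.

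To attack this, I would first reduce to the case where $k$ is algebraically closed. The base change of $A$ to $\bar k$ is a product of absolutely simple algebras in $\fT_{\bar k}$, the trace map is compatible with base change and with direct product decompositions, and \'etaleness descends; so we may assume $k=\bar k$ and $\Gamma(A)=k$. Under this assumption, Corollary~\ref{cor:Gamma-field} says that $\bone$ appears in $A$ with multiplicity one, so $\Hom(A,\bone)$ is one-dimensional. Both $\epsilon_A$ and $\lambda$ lie in this line, so they are proportional; once $\lambda$ is normalized so that $\lambda(1)=1$, the proportionality constant is $\udim(A)=\epsilon_A(1)$. Thus the corollary reduces to the statement $\udim(A)\ne 0$.

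The main obstacle is this final step: a simple commutative algebra in a semi-simple pre-Tannakian category over an algebraically closed field must have non-zero categorical dimension. I would attack this through the \emph{Casimir} element $c=m\circ\Delta(1)\in\Gamma(A)=k$ of the Frobenius structure, where $\Delta\colon A\to A\otimes A$ is the comultiplication dual to $m$ under the Frobenius self-duality $A\cong A^\vee$. A direct diagrammatic computation using the coform of $(,)_\lambda$ identifies $\epsilon_A(a)=\lambda(ac)$, and in particular $c=\udim(A)$ after normalization, so everything reduces to showing $c\ne 0$. Supposing for contradiction that $c=0$, the comultiplication $\Delta$ factors through $\ker(m)\subset A\otimes A$; combined with the self-duality $A\cong A^\vee$ and the splittings of exact sequences of $A$-modules forced by semi-simplicity of $\fT$, the plan is to extract from this a proper non-zero $A$-submodule of $A$, i.e.\ an ideal, contradicting simplicity. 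This last step is where the semi-simplicity hypothesis is essential, and it is where I would expect the technical difficulty to be concentrated.
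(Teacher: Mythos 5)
Your proposal misses the structural hypothesis that makes this corollary easy: throughout \S\ref{ss:et-sub}, $A$ is a subalgebra of a fixed \'etale algebra $B$ (this is why the remark immediately following rephrases the corollary as ``any simple \emph{sub\'etale} algebra is \'etale''). Your argument never uses $B$; you instead try to prove the much stronger claim that \emph{every} simple commutative algebra in a semi-simple pre-Tannakian category is \'etale, by reducing to $\udim(A) \ne 0$ and attacking that via a Casimir-element argument whose final step (``extract a proper nonzero ideal, contradicting simplicity'') you leave as a plan. That final step is a genuine gap, and I do not see how to fill it: the paper's own \S\ref{ss:simple-is-etale} proves exactly this stronger claim only under substantial extra hypotheses (characteristic~$0$ or~$2$, no nontrivial self-dual simples, no object of dimension $-\tfrac12$), which is strong evidence that the unqualified version of your reduction target is either false or well beyond reach. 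So even though your first three steps (reduce to $\bar k$, use Corollary~\ref{cor:Gamma-field} to pin $\Hom(A,\bone)$ to a line, identify $\epsilon_A$ as $\udim(A)\cdot\lambda$) are correct, the approach as a whole does not prove the statement.

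The paper's actual proof is two lines and uses $B$ in an essential way: by semi-simplicity the unit $\bone \to A$ admits a retraction $\lambda_0 \colon A \to \bone$, and by semi-simplicity again $\lambda_0$ extends along the inclusion $A \hookrightarrow B$ to a map $\lambda \colon B \to \bone$; since $\lambda|_A = \lambda_0$ is nonzero, Proposition~\ref{prop:simple-sub} applies. The point of Proposition~\ref{prop:simple-sub} (via Propositions~\ref{prop:etale-summand} and~\ref{prop:et-sub}) is precisely to convert the existence of such a $\lambda$ on the ambient \'etale algebra into an $A$-module splitting of $B$, hence into exactness of $0 \to A \to B \to B \otimes_A B$, without ever needing to compute $\udim(A)$ or $\epsilon_A$ directly. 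If you want a correct proof, start from the $B$ you were given.
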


\begin{proof}
Indeed, the unit $\bone \to A$ admits a right inverse $\lambda_0 \colon A \to \bone$, which extends to a map $\lambda \colon B \to \bone$; both claims follow from semi-simplicity.
\end{proof}

\begin{remark} \label{rmk:simple-subet}
We can rephrase the above corollary as follows: in a semi-simple pre-Tannakian category, any simple sub\'etale algebra is \'etale.
\end{remark}

If $\fT_1$ and $\fT_2$ are pre-Tannakian categories and $Y$ and $Z$ are objects of $\fT_1 \boxtimes \fT_2$, then we define the \defn{partial internal Hom}, denoted $\uHom_{\fT_1}(Y, Z)$ to be the object of $\fT_2$ characterized by the mapping property
\begin{displaymath}
\Hom_{\fT_2}(X, \uHom_{\fT_1}(Y, Z)) = \Hom_{\fT_1 \boxtimes \fT_2}(Y \otimes X, Z),
\end{displaymath}
where $X$ is an object of $\fT_2$. If $Z$ is a commutative algebra in $\fT_1 \boxtimes \fT_2$, then one easily sees that $\uHom_{\fT_1}(\bone, Z)$ is naturally a commutative algebra in $\fT_2$.

\begin{proposition} \label{prop:et-invar}
Let $\fT=\fT_1 \boxtimes \fT_2$, where $\fT_1$ and $\fT_2$ are pre-Tannakian and $\fT_1$ is semi-simple, and let $B$ be an \'etale algebra of $\fT$. Then $\uHom_{\fT_1}(\bone, B)$ is an \'etale algebra in $\fT_2$.
\end{proposition}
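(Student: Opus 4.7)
The plan is to realize $A$ as a subalgebra of $B$ of the form $\bone \boxtimes A \hookrightarrow B$ in $\fT$, show that this subalgebra is a direct summand of $B$ as an $(\bone \boxtimes A)$-module, conclude via Proposition~\ref{prop:etale-summand} that $\bone \boxtimes A$ is \'etale in $\fT$, and then deduce via the external products property \S\ref{ss:etale}(d) that $A$ itself is \'etale in $\fT_2$. The inclusion arises from the mapping property of the partial internal Hom: taking $\eta \colon \bone \boxtimes A \to B$ to be the morphism corresponding to $\id_A$ under $\Hom_{\fT_2}(A, A) = \Hom_\fT(\bone \boxtimes A, B)$ gives a canonical morphism that is injective and, by the way the algebra structure on $A$ is defined from that of $B$, an algebra homomorphism.

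The core of the argument is to use semi-simplicity of $\fT_1$ to decompose $B$ in $\fT$ as $B = (\bone \boxtimes A) \oplus C$, where $C$ collects the contributions from the simple objects of $\fT_1$ other than $\bone$, and to verify that $C$ is stable under multiplication by $\bone \boxtimes A$. The stability claim reduces to Schur's lemma: the restriction of the multiplication to $(\bone \boxtimes A) \otimes C \to B$ cannot land in the $\bone$-isotypic component of $B$, because $\Hom_{\fT_1}(L, \bone) = 0$ for any simple $L$ of $\fT_1$ not isomorphic to $\bone$, so its image must lie in $C$.

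Given the resulting $(\bone \boxtimes A)$-module decomposition $B = (\bone \boxtimes A) \oplus C$, Proposition~\ref{prop:etale-summand} applied to the \'etale algebra $B$ with subalgebra $\bone \boxtimes A$ yields that $\bone \boxtimes A$ is \'etale in $\fT$, and \S\ref{ss:etale}(d) then gives that $A$ is \'etale in $\fT_2$. The hard part will be rigorously setting up the $\fT_1$-isotypic decomposition of objects of $\fT_1 \boxtimes \fT_2$, since \S\ref{s:alg} does not assume $k$ algebraically closed and so the endomorphism algebras of simples of $\fT_1$ may be non-trivial division algebras over $k$; however, only the qualitative consequences are needed (that $\bone \boxtimes A$ is a direct summand of $B$ with a multiplication-stable complement), after which everything follows immediately from the results in \S\ref{s:alg}.
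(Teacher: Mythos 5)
Your proposal is correct and follows essentially the same route as the paper: decompose $B$ over the isotypic pieces of $\fT_1$ as $B = \bigoplus_i L_i \boxtimes B_i$, identify $A$ with the $\bone$-isotypic piece $\bone \boxtimes B_0$, note that each isotypic piece is an $A$-submodule (the paper states this directly; your Schur-lemma justification is the standard reason), and then invoke Proposition~\ref{prop:etale-summand} followed by \S\ref{ss:etale}(d). Your worry about division algebras over non-algebraically-closed $k$ is sensible caution, but it is harmless here: the Deligne tensor product with a semi-simple $\fT_1$ always yields the decomposition $B = \bigoplus_i L_i \boxtimes B_i$, and since $\End(\bone) = k$ in a pre-Tannakian category, $\uHom_{\fT_1}(\bone, B)$ is precisely $B_0$.
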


\begin{proof}
Let $\{L_i\}_{i \in I}$ be the simple objects of $\fT_1$, and let $0 \in I$ be the index such that $L_0=\bone$. Write $B=\bigoplus_{i \in I} L_i \boxtimes B_i$. Then $A=\bone \boxtimes B_0$ is a subalgebra of $B$, and each $L_i \boxtimes B_i$ is an $A$-submodule of $B$. It follows that $A$ is a summand of $B$ as an $A$-module, and so $A$ is \'etale by Proposition~\ref{prop:etale-summand}. It follows that $B_0$ is \'etale (\S \ref{ss:etale}(d)), which completes the proof.
\end{proof}

\section{The Delannoy category} \label{s:delannoy}

In this section we briefly recall the general theory of oligomorphic tensor categories from \cite{repst}, and then recall details about the Delannoy category from \cite{line}. We also prove a few simple new results. We do not require $k$ to be algebraically closed in \S \ref{s:delannoy}.

\subsection{Oligomorphic tensor categories} \label{ss:oligo}

A \defn{pro-oligomorphic group} is a topological group $G$ that is (a) non-archimedean (open subgroups form a neighborhood basis of the identity); (b) Roelcke pre-compact (if $U$ and $V$ are open subgroups then $U \backslash G/V$ is finite); and (c) Hausdorff. An \defn{oligomorphic permutation group} is a permutation group $(G, \Omega)$ such that $G$ has finitely many orbits on $\Omega^n$ for all $n$.

These two notions are closely related. Suppose $(G, \Omega)$ is an oligomorphic permutation group. For a finite subset $A \subset \Omega$, let $G(A)$ be the subgroup of $G$ fixing each element of $A$. Then the $G(A)$'s form a neighborhood basis of the identity for a pro-oligomorphic topology on $G$ \cite[Proposition~2.4]{repst}. Conversely, if $G$ is pro-oligomorphic then for every open subgroup $U$, the action of $G$ on $G/U$ is oligomorphic (meaning there are finitely many orbits on every cartesian power; see below), and this realizes $G$ as a dense subgroup of an inverse limit of oligomorphic permutation groups.

Let $G$ be a pro-oligomorphic group. We say that an action of $G$ on a set $X$ is \defn{smooth} if the stabilizer of any element of $X$ is an open subgroup of $G$, and \defn{finitary} if $G$ has finitely many orbits on $X$. We use the term ``$G$-set'' to mean ``set equipped with a finitary smooth action of $G$.'' We let $\bS(G)$ be the category of (finitary smooth) $G$-sets. It is closed under products \cite[Proposition~2.8]{repst}, from which it easily follows that $\bS(G)$ has all finite limits and finite co-limits (and they are computed on the underlying set). Additionally, we see that the action of $G$ on any $G$-set is oligomorphic.

A \defn{measure} for $G$ is a rule $\mu$ assigning to each morphism $f \colon Y \to X$ in $\bS(G)$, with $X$ transitive, a quantity $\mu(f)$ in $k$ such that some axioms hold; see \cite[\S 3.6]{repst}. Given a measure $\mu$, we define a tensor category $\uPerm(G, \mu)$ as follows. The objects are formal symbols\footnote{In \cite{repst}, we give a concrete interpretation of this category in which $\cC(X)$ is the vector space whose elements are ``Schwartz functions'' on $X$.} $\cC(X)$ where $X$ is a $G$-set; we call $\cC(X)$ the \defn{Schwartz space} on $X$. A morphism $\cC(X) \to \cC(Y)$ is a $G$-invariant $k$-valued function on $Y \times X$. Composition is given by convolution of functions, the definition of which involves $\mu$. We have
\begin{displaymath}
\cC(X) \oplus \cC(Y) = \cC(X \amalg Y), \qquad
\cC(X) \otimes \cC(Y) = \cC(X \times Y).
\end{displaymath}
Intuitively, $\cC(X)$ behaves like a permutation representation with basis $X$. The object $\cC(X)$ is rigid and self-dual, and its categorical dimension is $\mu(X)$, which is shorthand for $\mu(X \to \mathrm{pt})$. Moreover, $\cC(X)$ is naturally a commutative algebra, and as such is \'etale\footnote{We note that the definition of \'etale algebra makes sense in any tensor category.}; this follows from \cite[Proposition~9.6]{repst} and \cite[Proposition~4.11]{discrete}. We note that $\uPerm(G, \mu)$ is abelian if and only if $G$ is the trivial group. In particular, if $G$ is non-trivial then $\uPerm(G, \mu)$ is not a pre-Tannakian category. We refer to \cite[\S 8]{repst} for additional details about the category $\uPerm(G, \mu)$.

\begin{example}
Let $\fS$ be the group of all permutations of $\Omega=\{1,2,\ldots\}$. Then $(\fS, \Omega)$ is an oligomorphic permutation group. Suppose $k$ has characteristic~0. For each $t \in k$ there is a unique measure $\mu_t$ such that $\mu_t(\Omega)=t$, and these account for all the $k$-valued measures. The Karoubi envelope of $\uPerm(\fS, \mu_t)$ is equivalent to Deligne's interpolation category $\uRep(\fS_t)$ \cite{Deligne}. See \cite[\S 14]{repst} for proofs of these statements and additional details.
\end{example}

\subsection{The Delannoy group} \label{ss:delgp}

Let $\GG=\Aut(\bR,<)$ be the group of all order-preserving self-bijections of the real numbers. One easily sees that the action of $\GG$ on $\bR$ is oligomorphic. We now explain the structure of this group and the category $\bS(\GG)$ of $\GG$-sets.

Let $\bR^{(n)}$ be the subset of $\bR^n$ consisting of tuples $(x_1, \ldots, x_n)$ with $x_1<\cdots<x_n$. One easily sees that this is a transitive $\GG$-set. In fact, these account for all transitive $\GG$-sets, up to isomorphism \cite[Corollary~16.2]{repst}. One can think of $\bR^{(n)}$ as the space of all order-preserving injections $[n] \to \bR$, where $[n]=\{1, \ldots, n\}$. Thus if $i \colon [m] \to [n]$ is an order-preserving injection then there is an induced map $i^* \colon \bR^{(n)} \to \bR^{(m)}$ of $\GG$-sets. It is not difficult to see that these are the only $\GG$-maps between these two $\GG$-sets. (The key point is that the image $y$ of $x=(1, \ldots, n)$ must be fixed by the stabilizer of $x$, which implies that the coordinates of $y$ belong to $\{1, \ldots, n\}$.) Thus the category of transitive objects in $\bS(\GG)$ is anti-equivalent to the category of finite totally ordered sets, with order-preserving injections maps.

Recall that if $A$ is a finite subset of $\bR$ then $\GG(A)$ is the subgroup of $\GG$ consisting of elements fixing each point of $A$. By definition, a subgroup of $\GG$ is open if it contains some $\GG(A)$. In fact, every open subgroup of $\GG$ has the form $\GG(A)$ \cite[Proposition~16.1]{repst}; this is essentially equivalent to the classification of transitive sets discussed above. 

Let $A$ be an $n$-element subset of $\bR$. Then $\GG/\GG(A)$ is isomorphic to $\bR^{(n)}$. In particular, if $A'$ is another $n$-element subset of $\bR$ then $\GG(A)$ and $\GG(A')$ are conjugate subgroups of $\GG$. The set $\bR \setminus A$ consists of $n+1$ intervals, each of which is isomorphic to $\bR$ as a totally ordered set. From this, one sees that $\GG(A)$ is isomorphic to $\GG^{n+1}$. We write $\GG(0)$ in place of $\GG(\{0\})$; this group will occur frequently in what follows.

Let $A$ and $B$ be finite subsets of $\bR$. One easily sees that $\GG(B) \subset \GG(A)$ if and only if $A \subset B$. Suppose this is the case. We can then find a chain $A=B_0 \subset \cdots \subset B_r=B$ such that $B_{i+1}$ is obtained from $B_i$ by adding a single point. If $B_i$ has $m$ elements then $\GG(B_i)$ is isomorphic to $\GG^{m+1}$. Under this isomorphism, the subgroup $\GG(B_{i+1})$ is identified with $\GG^i \times \GG(0) \times \GG^{m-i}$ for some $i$ (determined by which interval of $\bR \setminus B_i$ the new point of $B_{i+1}$ belongs to). Thus, for many purposes, understanding containments between general open subgroups of $\GG$ reduces to the case of the containment $\GG(0) \subset \GG$.

\subsection{The Delannoy category} \label{ss:delcat}

The group $\GG$ has a unique $k$-valued measure $\mu$ satisfying $\mu(\bR^{(n)})=(-1)^n$ \cite[Theorem~16.7]{repst}; in total, $\GG$ has four $k$-valued measures, but the other three will not be relevant to us. We define the \defn{Delannoy category} $\uRep(\GG)$ to be the Karoubi envelope of $\uPerm(\GG, \mu)$. It is a semi-simple pre-Tannakian category \cite[\S 16.5]{repst}. It is remarkable that this category is semi-simple over any coefficient field.

More generally, for any open subgroup $U$ of $\GG$, we define $\uRep(U)$ to be the Karoubi envelope of $\uPerm(U, \mu)$, which is again a semi-simple pre-Tannakian category. If $U$ is isomorphic to $\GG^s$ then $\uRep(U)$ is equivalent to the Deligne tensor power $\uRep(\GG)^{\boxtimes s}$ of the Delannoy category. See \cite[\S 4.6]{EGNO} for background on this construction.

\subsection{Schwartz spaces} \label{ss:schwartz}

Let $X$ be a $\GG$-set. The Schwartz space $\cC(X)$ is then an \'etale algebra in the category $\uRep(\GG)$; indeed, in \cite[\S 9.6]{repst} it is shown that $\cC(X)$ has the structure of a special commutative Frobenius algebra, and such algebras are equivalent to \'etale algebras \cite[Proposition~4.11]{discrete}. The invariant algebra $\Gamma(\cC(X))$ consists of $\GG$-invariant $k$-valued functions on $X$. The algebra structure on this is given by pointwise multiplication of functions; this follows from the definition of the algebra structure on $\cC(X)$ given in \cite[Proposition~9.6]{repst}. In particular, $\cC(X)$ is a simple algebra if and only if $X$ is transitive. We thus have simple \'etale alebras $\cC(\bR^{(n)})$ for each $n \ge 0$.

Theorem~\ref{mainthm} asserts that these are the only simple algebras in $\uRep(\GG)$, up to isomorphism. The more substantial part of the proof (carried out in \S \ref{s:proof}) is showing that any simple algebra is a subalgebra of some $\cC(\bR^{(n)})$. Given this, we are reduced to showing that any simple subalgebra of $\cC(\bR^{(n)})$ is isomorphic to some $\cC(\bR^{(m)})$. We take care of this part now.

In \cite[\S 5.3]{discrete}, we classify the \'etale subalgebras of an \'etale algebra in an arbitrary pre-Tannakian category in terms of elements we call E-idempotents (these appeared in the proof of Proposition~\ref{prop:et-sub} above). We recall the definition. Let $B$ be an \'etale algebra in some pre-Tannakian category $\fT$. An \defn{E-idempotent} of $B$ is an idempotent $\gamma \in \Gamma(B \otimes B)$ satisfying the following conditions:
\begin{enumerate}
\item We have $\mu(\gamma)=1$, where $\mu \colon B \otimes B \to B$ is the multiplication map.
\item We have $\tau(\gamma)=\gamma$, where $\tau$ is the symmetry of $B \otimes B$.
\item For $1 \le i<j \le 3$, let $\gamma_{i,j}$ be the idempotent of $B \otimes B \otimes B$ that puts $\gamma$ in the $i$ and $j$ factors, and 1 in the remaining factor. Then $\gamma_{1,2} \gamma_{2,3}=\gamma_{1,2} \gamma_{1,3} = \gamma_{1,3} \gamma_{2,3}$.
\end{enumerate}
By \cite[Proposition~5.12]{discrete}, \'etale subalgebras of $B$ are in natural bijective correspondence with E-idempotents of $B$. The following proposition classifies E-idempotents on Schwartz spaces, and, at the same time, provides intuition for the definition.

\begin{proposition} \label{prop:E-idemp}
Let $X$ be a $\GG$-set. Then E-idempotents of $\cC(X)$ correspond bijectively to $\GG$-stable equivalence relations on $X$.
\end{proposition}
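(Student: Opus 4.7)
The plan is to unwind the definitions so that an E-idempotent becomes the indicator function of a $\GG$-stable subset of $X\times X$, and the three axioms become reflexivity, symmetry, and transitivity. The computation takes place entirely in the commutative algebra $\Gamma(\cC(X)\otimes\cC(X))$, so no delicate diagrammatic work is required.

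First, I would identify $\cC(X)\otimes\cC(X)=\cC(X\times X)$ using the definition in \S\ref{ss:oligo}, so that $\Gamma(\cC(X)\otimes\cC(X))$ is the space of $\GG$-invariant $k$-valued functions on $X\times X$. Since $\cC(X\times X)$ is \'etale, its algebra structure at the level of $\Gamma$ is pointwise multiplication of functions (\S\ref{ss:schwartz}). Therefore idempotents $\gamma$ in $\Gamma(\cC(X)\otimes\cC(X))$ are precisely the indicator functions $1_R$ of subsets $R\subset X\times X$, and the requirement that $\gamma$ be $\GG$-invariant is equivalent to $R$ being $\GG$-stable. In the same way, $\Gamma(\cC(X)^{\otimes 3})$ is the ring of $\GG$-invariant functions on $X^3$, and the idempotents $\gamma_{i,j}$ are the indicators of the $\GG$-stable subsets obtained by pulling $R$ back along the $(i,j)$ projection $X^3\to X^2$.

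Next, I would translate the three axioms:
\begin{enumerate}
\item The multiplication $\mu\colon\cC(X)\otimes\cC(X)\to\cC(X)$ is, by \cite[Proposition~9.6]{repst}, the morphism supported on the diagonal $\{(x,x,x)\}\subset X\times(X\times X)$; applied to a function $\gamma(x_1,x_2)$ it returns the restriction $x\mapsto\gamma(x,x)$. Hence $\mu(\gamma)=1$ says $1_R(x,x)=1$ for all $x$, i.e., $R$ is reflexive.
\item The symmetry $\tau$ on $\cC(X\times X)$ swaps the two factors of $X\times X$, so $\tau(\gamma)(x_1,x_2)=\gamma(x_2,x_1)$. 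Thus $\tau(\gamma)=\gamma$ says $R$ is symmetric.
\item The three products $\gamma_{i,j}\gamma_{k,l}$ are pointwise products of indicators on $X^3$, so the axiom reads
\begin{displaymath}
1_R(x_1,x_2)\,1_R(x_2,x_3)=1_R(x_1,x_2)\,1_R(x_1,x_3)=1_R(x_1,x_3)\,1_R(x_2,x_3)
\end{displaymath}
for all $(x_1,x_2,x_3)\in X^3$. Given reflexivity and symmetry, a standard check shows that this single condition is equivalent to: whenever any two of $(x_1,x_2)$, $(x_2,x_3)$, $(x_1,x_3)$ lie in $R$, so does the third. In particular, $R$ is transitive.
\end{enumerate}

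Conversely, if $R$ is a $\GG$-stable equivalence relation on $X$, then $\gamma=1_R\in\Gamma(\cC(X\times X))$ is a $\GG$-invariant idempotent satisfying the three conditions above by the same calculations. This produces mutually inverse bijections between E-idempotents of $\cC(X)$ and $\GG$-stable equivalence relations on $X$. The only step that requires any care is the identification of $\mu$ at the level of $\Gamma$ with restriction to the diagonal; once that is settled via \cite[Proposition~9.6]{repst}, the rest is a direct translation.
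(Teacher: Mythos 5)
Your proof is correct and follows essentially the same approach as the paper's: identify an E-idempotent as the indicator function of a $\GG$-stable subset $R\subset X\times X$ (using the pointwise-multiplication structure on $\Gamma(\cC(X\times X))$), then translate conditions (a), (b), (c) into reflexivity, symmetry, and transitivity of $R$, noting that the argument reverses. The only cosmetic difference is that you spell out explicitly that $\mu$ at the level of $\Gamma$ is restriction to the diagonal and that (c) amounts to the ``any two implies the third'' property, whereas the paper states these translations more tersely; the mathematical content is identical.
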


\begin{proof}
Let $\gamma$ be an E-idempotent of $\cC(X) \otimes \cC(X)=\cC(X \times X)$. Thus $\gamma$ is a $\GG$-invariant function on $X \times X$ that takes values in $\{0, 1\}$ (since it is idempotent). It is therefore the indicator function of a $\GG$-stable subset $R \subset X \times X$. Condition (a) implies that $R$ contains the diagonal, condition (b) implies that $R$ is symmetric, and condition (c) implies that $R$ is transitive. We elaborate on transitivity. The element $\gamma_{i,j}$ is the pull-back of $\gamma$ under the projection map $p_{i,j} \colon X^3 \to X^2$ on the $i$ and $j$ coordinates. It is the indicator function of $p_{i,j}^{-1}(R)$. Thus condition (c) translates to
\begin{displaymath}
p_{1,2}^{-1}(R) \cap p_{2,3}^{-1}(R) = p_{1,2}^{-1}(R) \cap p_{1,3}^{-1}(R) = p_{1,3}^{-1}(R) \cap p_{2,3}^{-1}(R),
\end{displaymath}
which is equivalent to transitivity. We thus see that an E-idempotent is the indicator function of an equivalence relation. This reasoning is reversible, which completes the proof.
\end{proof}

With the above proposition in hand, we can now classify the simple subalgebras of $\cC(\bR^{(n)})$.

\begin{proposition} \label{prop:schwartz-sub}
A simple subalgebra of $\cC(\bR^{(n)})$ is isomorphic to $\cC(\bR^{(m)})$ for some $m \le n$.
\end{proposition}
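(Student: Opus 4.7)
The plan is to translate the problem into combinatorics of $\GG$-sets via the E-idempotent machinery developed in \cite{discrete}, leveraging the fact that the ambient algebra $\cC(\bR^{(n)})$ is already \'etale and that the Delannoy category is semi-simple.

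First I would reduce to the \'etale case. Let $A$ be a simple subalgebra of $B := \cC(\bR^{(n)})$. Since $B$ is \'etale, $A$ is sub\'etale, and since $\uRep(\GG)$ is semi-simple (\S\ref{ss:delcat}), Remark~\ref{rmk:simple-subet} forces $A$ to be \'etale as well. (Alternatively, one could invoke Corollary~\ref{cor:simple-sub}, using $\udim(B) = \mu(\bR^{(n)}) = (-1)^n \neq 0$.)

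Next I would classify $A$ via E-idempotents. By \cite[Proposition~5.12]{discrete}, the \'etale subalgebras of $B$ are in natural bijection with the E-idempotents of $B$, and by Proposition~\ref{prop:E-idemp} these in turn correspond to $\GG$-stable equivalence relations $R$ on $\bR^{(n)}$. Under this dictionary, the subalgebra attached to $R$ is the ``pullback'' inclusion $\cC(\bR^{(n)}/R) \hookrightarrow \cC(\bR^{(n)})$.

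Finally I would identify the quotient set. Since $\bR^{(n)}$ is transitive, so is $\bR^{(n)}/R$, so by the classification of transitive $\GG$-sets (\S\ref{ss:delgp}) we have $\bR^{(n)}/R \cong \bR^{(m)}$ for some $m \ge 0$. The resulting surjective $\GG$-map $\bR^{(n)} \to \bR^{(m)}$ must be of the form $i^*$ for an order-preserving injection $i \colon [m] \hookrightarrow [n]$, which forces $m \le n$. Combining these steps gives $A \cong \cC(\bR^{(m)})$, as desired.

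The main (minor) subtlety is in the middle step: one should verify that the \'etale subalgebra of $B$ built from $R$ by the abstract machinery of \cite{discrete} genuinely matches the natural inclusion $\cC(\bR^{(n)}/R) \hookrightarrow \cC(\bR^{(n)})$. This is transparent in the vector-space model, where $R$-invariant functions on $\bR^{(n)}$ pull back from $\bR^{(n)}/R$, and it should follow in general from the categorical analog sketched at the end of \S\ref{ss:et-sub}. Beyond this bookkeeping, no real obstacle is anticipated.
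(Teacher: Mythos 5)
Your proof is correct and follows essentially the same route as the paper's: reduce to the \'etale case via semi-simplicity (Corollary~\ref{cor:simple-sub}), translate \'etale subalgebras of $\cC(\bR^{(n)})$ into $\GG$-stable equivalence relations via Proposition~\ref{prop:E-idemp} and the E-idempotent bijection from \cite{discrete}, and then invoke the classification of transitive $\GG$-sets. The subtlety you flag (matching the abstract E-idempotent subalgebra with the concrete pullback $\cC(\bR^{(n)}/R) \hookrightarrow \cC(\bR^{(n)})$) is acknowledged in the paper's proof as well, with the same level of hand-waving (``It is not difficult to see that\dots'').
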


\begin{proof}
Let $\gamma$ be an E-idempotent of $\cC(\bR^{(n)})$. By Proposition~\ref{prop:E-idemp}, $\gamma$ corresponds to a $\GG$-stable equivalence relation $R \subset \bR^{(n)} \times \bR^{(n)}$. Let $X=\bR^{(n)}/R$ be the quotient set, and let $p \colon \bR^{(n)} \to X$ be the quotient map. Then $p^* \colon \cC(X) \to \cC(\bR^{(n)})$ is an injective algebra homomorphism, and so we can identify $\cC(X)$ with a subalgebra of $\cC(\bR^{(n)})$. It is not difficult to see that this subalgebra corresponds to $\gamma$ under the bijection of \cite[Proposition~5.12]{discrete}, and so the $\cC(X)$ account for all \'etale subalgebras of $\cC(\bR^{(n)})$. By the classification of transitive $\GG$-sets (\S \ref{ss:delgp}), we have $X \cong \bR^{(m)}$ for some $m \le n$. Finally, any simple subalgebra of $\cC(\bR^{(n)})$ is \'etale by Corollary~\ref{cor:simple-sub}.
\end{proof}

\subsection{Simple objects} \label{ss:simple}

A \defn{weight} is a word in the two letter alphabet $\{\wa, \wb\}$. The simple objects of $\uRep(\GG)$ are indexed by weights \cite[\S 4.1]{line}. For a weight $\lambda$, we write $L_{\lambda}$ for the corresponding simple object; this is defined as a certain summand of $\cC(\bR^{(n)})$, where $n=\ell(\lambda)$ is the length of $\lambda$. If $\lambda$ is the empty word then $L_{\lambda}$ is the trivial representation $\bone$. Write $\lambda^{\vee}$ for the word obtained from $\lambda$ by changing all $\wa$ to $\wb$, and vice versa. The dual of $L_{\lambda}$ is $L_{\lambda^{\vee}}$ \cite[Proposition~4.16]{line}. The dimension of $L_{\lambda}$ is $(-1)^{\ell(\lambda)}$ \cite[Corollary~5.7]{line}.

The simple decomposition of Schwartz space is given in \cite[Theorem~4.7]{line}:
\begin{displaymath}
\cC(\bR^{(n)}) = \bigoplus_{\ell(\lambda) \le n} L_{\lambda}^{\oplus m(\lambda)}, \qquad m(\lambda) = \binom{n}{\ell(\lambda)},
\end{displaymath}
In particular, for $n=1$, we find
\begin{displaymath}
\cC(\bR) = L_{\wa} \oplus \bone \oplus L_{\wb}.
\end{displaymath}

In \cite[\S 7]{line}, the decomposition of the tensor product $L_{\lambda} \otimes L_{\mu}$ is described for arbitrary weights $\lambda$ and $\mu$. Only a few simple cases are relevant to us:
\begin{align*}
L_{\wa} \otimes L_{\wa} &= L_{\wa\wa}^{\oplus 2} \oplus L_{\wa} \\
L_{\wb} \otimes L_{\wb} &= L_{\wb\wb}^{\oplus 2} \oplus L_{\wb} \\
L_{\wa} \otimes L_{\wb} &= L_{\wa\wb} \oplus L_{\wb\wa} \oplus L_{\wa} \oplus L_{\wb} \oplus \bone.
\end{align*}
The following important example is an immediate consequence of this.

\begin{example} \label{ex:subet}
Since $L_{\wa} \otimes L_{\wa}$ does not contain $L_{\wb}$, it follows that $A=L_{\wa} \oplus \bone$ is a subalgebra of $\cC(\bR)$. Since $A$ is not self-dual as an object of $\uRep(\GG)$, it cannot be an \'etale algebra. Thus $A$ is a sub\'etale algebra that is not \'etale. This example resolves \cite[Question~5.15]{discrete}, which asked if sub\'etale implies \'etale. The existence of this algebra, combined with the discussion at the beginning of \S \ref{ss:et-sub}, implies that $\uRep(\GG)$ cannot be obtained by interpolating Tannakian categories; this was proved in \cite[\S 16]{repst} by other means. It is not difficult to give many other examples of sub\'etale algebras in $\uRep(\GG)$ that are not \'etale, using the same idea.
\end{example}

\subsection{Restriction} \label{ss:res}

For open subgroups $V \subset U$ of $\GG$, we have a restriction functor
\begin{displaymath}
\Res^U_V \colon \uRep(U) \to \uRep(V).
\end{displaymath}
The most important case for us is $U=\GG$ and $V=\GG(0)$. Restriction is then a functor
\begin{displaymath}
\uRep(\GG) \to \uRep(\GG(0)) \cong \uRep(\GG) \boxtimes \uRep(\GG).
\end{displaymath}
We now describe the explicit rule for decomposing the restriction of a simple object. Let $\lambda=\lambda_1 \cdots \lambda_n$ be a weight of length $n$. We let $\lambda[i,j]=\lambda_i \cdots \lambda_j$ and $\lambda[i,j) = \lambda_i \cdots \lambda_{j-1}$, and similarly define $\lambda(i,j]$; here we use the convention that $\lambda[i,j]=\emptyset$ if $j<i$ and $\lambda[i,j)=\emptyset$ if $j \le i$. We have
\begin{displaymath}
\Res^{\GG}_{\GG(0)}(L_{\lambda}) = \bigoplus_{i=0}^n \big( L_{\lambda[1,i]} \boxtimes L_{\lambda(i,n]} \big) \oplus \bigoplus_{i=1}^n \big( L_{\lambda[1,i)} \boxtimes L_{\lambda(i,n]} \big).
\end{displaymath}
In the first sum, we break $\lambda$ in two by cutting it between letters; in the second sum, we break $\lambda$ in two by deleting letters. The above formula is \cite[Theorem~6.9]{line}. Since every containment between open subgroups can be decomposed into a chain where consecutive inclusions look like $\GG(0) \subset \GG$ (see \S \ref{ss:delgp}), one can determine restrictions between any open subgroups from this rule.

\begin{proposition} \label{prop:G0-invar}
The invariant space $L_{\lambda}^{\GG(0)}$ is one-dimensional if $\ell(\lambda) \le 1$, and vanishes otherwise.
\end{proposition}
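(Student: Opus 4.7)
The plan is to compute $L_{\lambda}^{\GG(0)} = \Gamma\bigl(\Res^{\GG}_{\GG(0)}(L_{\lambda})\bigr)$ directly from the restriction formula recalled just above, combining it with the observation that $\Gamma$ kills every non-trivial simple object. Since $\uRep(\GG(0)) \cong \uRep(\GG) \boxtimes \uRep(\GG)$, and $\Gamma$ on such a Deligne tensor product factors as $\Gamma(L_{\mu} \boxtimes L_{\nu}) = \Gamma(L_{\mu}) \otimes \Gamma(L_{\nu})$, a summand $L_{\mu} \boxtimes L_{\nu}$ contributes to the invariants if and only if both $\mu$ and $\nu$ are the empty word (in which case the contribution is one-dimensional, because $L_{\emptyset} = \bone$ and $\End(\bone) = k$; for non-empty $\mu$ we have $\Gamma(L_{\mu}) = 0$ by semi-simplicity and Schur).

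So I would just count, for each $\lambda$ of length $n$, how many summands of the two sums in the restriction formula have both tensor factors equal to $\bone$. In the first sum $\bigoplus_{i=0}^{n} L_{\lambda[1,i]} \boxtimes L_{\lambda(i,n]}$, the condition forces $i = 0$ and $i = n$ simultaneously, which happens only when $n = 0$ (and then the unique $i=0$ contributes $\bone \boxtimes \bone$). In the second sum $\bigoplus_{i=1}^{n} L_{\lambda[1,i)} \boxtimes L_{\lambda(i,n]}$, the condition forces $i = 1$ and $i = n$ simultaneously, which happens only when $n = 1$ (the single $i=1$ term contributes $\bone \boxtimes \bone$). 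For $n \ge 2$ neither sum contains a trivial summand. Adding up, $\dim L_{\lambda}^{\GG(0)}$ equals $1$ when $\ell(\lambda) \in \{0,1\}$ and $0$ otherwise, as claimed.

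There is really no serious obstacle: the restriction formula from \cite[Theorem~6.9]{line} has already done all the work, and the proof is a bookkeeping exercise on how the prefixes $\lambda[1,i]$, $\lambda[1,i)$ and suffixes $\lambda(i,n]$ can simultaneously be empty. The only minor point to be careful about is to interpret $\Gamma$ correctly on $\uRep(\GG) \boxtimes \uRep(\GG)$; this is standard (see \cite[Prop.~1.11.2]{EGNO}, which was already invoked in \S\ref{ss:etale}(d)).
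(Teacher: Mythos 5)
Your proposal is correct and follows exactly the same route as the paper's proof: apply the explicit restriction formula from \cite[Theorem~6.9]{line} and count the occurrences of $\bone \boxtimes \bone$. You have simply spelled out the bookkeeping on prefixes and suffixes that the paper leaves implicit.
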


\begin{proof}
This follows from the restriction rule given above: $\bone \boxtimes \bone$ appears in the restriction of $L_{\lambda}$ to $\GG(0)$ with multiplicity one if $\ell(\lambda) \le 1$, and with multiplicity zero otherwise.
\end{proof}

\begin{example} \label{ex:res}
Here is a simple example (we omit the $\boxtimes$ symbols):
\begin{displaymath}
\Res^{\GG}_{\GG(0)}(L_{\wa\wb}) = L_{\wa\wb}L_{\emptyset} \oplus L_{\wa}L_{\wb} \oplus L_{\emptyset}L_{\wa\wb} \oplus L_{\wa}L_{\emptyset} \oplus L_{\emptyset}L_{\wb}
\end{displaymath}
\end{example}

\subsection{Algebra maps and adjunction}

Let $U$ be an open subgroup of $\GG$. There is then an induction functor
\begin{displaymath}
\Ind_U^{\GG} \colon \uRep(U) \to \uRep(\GG)
\end{displaymath}
that is both left and right adjoint to the restriction functor \cite[\S 2.5]{line}. This functor satisfies $\Ind_U^{\GG}(\bone)=\cC(\GG/U)$.

\begin{proposition} \label{prop:alg-adjunct-0}
Let $A$ be an algebra in $\uRep(\GG)$ and let $U$ be an open subgroup of $\GG$. Suppose we have an algebra homomorphism $f \colon \Res^{\GG}_U(A) \to \bone$ in $\uRep(U)$. Then there is an algebra homomorphism $g \colon A \to \cC(\GG/U)$ in $\uRep(\GG)$, uniquely characterized by the identity $f=\epsilon \circ g$, where $\epsilon \colon \cC(\GG/U) \to \bone$ evaluates a function at $1 \in \GG/U$.
\end{proposition}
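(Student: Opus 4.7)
The plan is to invoke the Frobenius reciprocity adjunction $\Res^{\GG}_U \dashv \Ind^{\GG}_U$. Since $\Ind^{\GG}_U(\bone) = \cC(\GG/U)$, the adjunction bijection produces, from $f \colon \Res^{\GG}_U(A) \to \bone$, a unique morphism $g \colon A \to \cC(\GG/U)$ in $\uRep(\GG)$ characterized by $f = \epsilon_{\bone} \circ \Res^{\GG}_U(g)$, where $\epsilon_{\bone}$ is the counit of the adjunction at $\bone$. The first thing to do is to identify this counit with the evaluation-at-$(1 \cdot U)$ map $\epsilon$ in the statement; this is a routine consequence of the Schwartz-space realization $\Ind^{\GG}_U(\bone) = \cC(\GG/U)$. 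Uniqueness of $g$ is then automatic from the adjunction.

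What remains is to verify that the $g$ produced by the adjunction is an algebra homomorphism, and not merely a morphism. The key preliminary lemma to record is that $\epsilon \colon \cC(\GG/U) \to \bone$ is itself an algebra homomorphism in $\uRep(U)$: the algebra structure on the Schwartz space $\cC(\GG/U)$ is pointwise multiplication (see \S \ref{ss:schwartz}), so evaluation at a single point is automatically multiplicative and unit-preserving, and $U$-equivariance is clear because $U$ stabilizes the coset $1 \cdot U$.

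Granted this, multiplicativity of $g$ follows by pushing the two candidate composites $g \circ m_A$ and $m_{\cC(\GG/U)} \circ (g \otimes g)$, both from $A \otimes A$ to $\cC(\GG/U)$, through the adjunction bijection $\Hom_{\GG}(A \otimes A, \cC(\GG/U)) \cong \Hom_U(\Res(A) \otimes \Res(A), \bone)$. Using that $\Res^{\GG}_U$ is strong symmetric monoidal and that $\epsilon$ is a ring homomorphism, they transport to $f \circ m_{\Res(A)}$ and $(\epsilon \otimes \epsilon) \circ (\Res(g) \otimes \Res(g)) = f \otimes f$ respectively, which coincide precisely because $f$ is assumed to be an algebra map. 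Unit preservation for $g$ is handled identically (and more easily), and uniqueness is built into the adjunction. I expect no substantive obstacle here: the statement is a formal consequence of Frobenius reciprocity once one observes that $\epsilon$ is multiplicative.
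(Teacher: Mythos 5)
Your argument is essentially the same as the paper's: both obtain $g$ from the $\Res \dashv \Ind$ adjunction and then check multiplicativity by transporting $g\circ m_A$ and $m_{\cC(\GG/U)}\circ(g\otimes g)$ across the adjunction, using that $\epsilon$ is an algebra map and that $f$ is an algebra map. The paper phrases the final check as a commutative diagram rather than as "transport through the bijection," but the content is identical (including the implicit use of $\epsilon$ being multiplicative, which the paper also takes for granted).
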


\begin{proof}
By adjunction, a unique map $g$ in $\uRep(\GG)$ exists satisfying $f=\epsilon \circ g$. We must show that $g$ is an algebra homomorphism. Compatibility with the unit follows since $\epsilon \circ g$ is compatible with the unit and $\epsilon \colon \Gamma(\cC(\GG/U)) \to \Gamma(\bone)$ is an isomorphism. We now check compatibility with multiplication. Let $g_1, g_2 \colon A \otimes A \to \cC(\GG/U)$ be the maps $g_1=g \circ \mu$ and $g_2 = \mu \circ (g \otimes g)$, where $\mu$ is the multiplication map in $A$ or $\cC(\GG/U)$. We must show $g_1=g_2$. It suffices, by adjunction, to show that the morphisms $\epsilon \circ g_1$ and $\epsilon \circ g_2$ in $\uRep(U)$ are equal. Consider the diagram in $\uRep(U)$
\begin{displaymath}
\xymatrix@C=3em{
A \otimes A \ar[r]^-{g \otimes g} \ar[d]^{\mu} & \cC(\GG/U) \otimes \cC(\GG/U) \ar[d]^{\mu} \ar[r]^-{\epsilon \otimes \epsilon} \ar[r] & \bone \ar@{=}[d] \\
A \ar[r]^g & \cC(\GG/U) \ar[r]^{\epsilon} & \bone }
\end{displaymath}
The right square commutes since $\epsilon$ is an algebra homomorphism, and the outside square commutes since $f=\epsilon \circ g$ is an algebra homomorphism. We thus see that $\epsilon \circ g_1=\epsilon \circ g_2$, as required.
\end{proof}

The above proposition has a geometric interpretation. The map $f$ corresponds to a $U$-fixed point of $\Spec(A)$. The action of $\GG$ on this point defines a map $\GG/U \to \Spec(A)$, which corresponds to the algebra map $g$. Instead of making these geometric ideas precise, we prefer to simply give the algebraic proof above. The following more general result is proved in the same manner.

\begin{proposition} \label{pro:alg-adjunct}
Let $\fT$ be a pre-Tannakian category, let $A$ be an algebra in $\uRep(\GG) \boxtimes \fT$, let $E$ be an algebra in $\fT$, and let $U$ be an open subgroup of $\GG$. Given an algebra homomorphism $A \to \bone \boxtimes E$ in $\uRep(U) \boxtimes \fT$, there is an algebra homomorphism $A \to \cC(\GG/U) \boxtimes E$ in $\uRep(\GG) \boxtimes \fT$, characterized as in Proposition~\ref{prop:alg-adjunct-0}.
\end{proposition}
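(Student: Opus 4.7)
The plan is to reproduce the argument of Proposition~\ref{prop:alg-adjunct-0} verbatim in the relative setting, with $\bone$ and $\cC(\GG/U)$ replaced by $\bone \boxtimes E$ and $\cC(\GG/U) \boxtimes E$ respectively. The one new structural input required is that the adjunction $(\Res^{\GG}_U, \Ind_U^{\GG})$ extends, after Deligne-tensoring with $\id_{\fT}$, to an adjunction between $\uRep(\GG) \boxtimes \fT$ and $\uRep(U) \boxtimes \fT$ (a standard property of the Deligne tensor product, cf.\ \cite[\S 4.6]{EGNO}), and that the extended right adjoint sends $\bone \boxtimes E$ to $\cC(\GG/U) \boxtimes E$.

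First I would produce $g \colon A \to \cC(\GG/U) \boxtimes E$ by applying this extended adjunction to the given $f$: it is the unique morphism in $\uRep(\GG) \boxtimes \fT$ satisfying $f = (\epsilon \boxtimes \id_E) \circ g$ (with $g$ on the right hand side implicitly restricted), where $\epsilon \colon \cC(\GG/U) \to \bone$ is the evaluation-at-$1$ map. Crucially, $\epsilon \boxtimes \id_E$ is itself an algebra homomorphism in $\uRep(U) \boxtimes \fT$, since $\epsilon$ is one in $\uRep(U)$ and $\id_E$ is one in $\fT$; the external tensor product of algebra homomorphisms is again an algebra homomorphism.

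Next I would verify that $g$ is a unital algebra homomorphism using the same diagram chase as in Proposition~\ref{prop:alg-adjunct-0}. Compatibility with the unit follows because $(\epsilon \boxtimes \id_E) \circ g$ equals $f$ after restriction, $f$ is unital, and $\epsilon \boxtimes \id_E$ induces a bijection on the relevant $\Hom$-groups via adjunction. For multiplicativity, define $g_1 = g \circ \mu_A$ and $g_2 = \mu \circ (g \otimes g)$ as maps $A \otimes A \to \cC(\GG/U) \boxtimes E$; by adjunction it suffices to check that their compositions with $\epsilon \boxtimes \id_E$ (after restriction) agree, and this follows from the commutative diagram in which the right square commutes because $\epsilon \boxtimes \id_E$ is an algebra homomorphism, and the outer square commutes because $f$ is.

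I do not anticipate any serious obstacle: the argument is essentially a mechanical transport of Proposition~\ref{prop:alg-adjunct-0} across the Deligne tensor product with $\fT$. The only point worth checking carefully is that the unit and counit of the extended adjunction are the external products of the original unit and counit, which follows from the universal property of the Deligne tensor product.
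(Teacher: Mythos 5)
Your proposal is correct and matches the paper's own proof, which simply states that Proposition~\ref{pro:alg-adjunct} ``is proved in the same manner'' as Proposition~\ref{prop:alg-adjunct-0}. You have correctly spelled out the implicit content of that remark: the adjunction extends across the Deligne tensor product, $\epsilon \boxtimes \id_E$ remains an algebra homomorphism, and the diagram chase transports verbatim.
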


\subsection{Simple algebras are \'etale} \label{ss:simple-is-etale}

Assume that $k$ is algebraically closed. The main result of this paper, Theorem~\ref{mainthm}, classifies the simple algebras in $\uRep(\GG)$. A consequence of this classification is that all simple algebras are \'etale. In fact, we can prove this more directly, at least in characteristics~0 and~2. This argument is not used to prove our main results, but we feel it is worth including nonetheless.

\begin{proposition} \label{prop:simple-etale-crit}
Let $\fT$ be a $k$-linear semi-simple pre-Tannakian category. Suppose that:
\begin{enumerate}
\item The field $k$ has characteristic~0 or~2.
\item The only self-dual simple object of $\fT$ is the monoidal unit $\bone$.
\item If $k$ has characteristic~0 then no object of $\fT$ has dimension $-\tfrac{1}{2}$.
\end{enumerate}
Then every simple algebra in $\fT$ is \'etale.
\end{proposition}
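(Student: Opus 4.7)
The plan is to prove that $\udim(A)$ is non-zero in $k$; once this is established, \S\ref{ss:etale}(c) concludes that $A$ is \'etale, because $\udim(A) = \epsilon_A(1)$ and a simple algebra whose trace map is non-zero is \'etale by Proposition~\ref{prop:simple-Frob}.

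First I would equip $A$ with a Frobenius structure via Corollary~\ref{cor:simple-Frob}, which in particular yields an isomorphism $A \cong A^{\vee}$ of objects of $\fT$. Using this self-duality, I decompose $A$ into simples. By hypothesis (b), every non-trivial simple $L$ of $\fT$ satisfies $L \not\cong L^{\vee}$, and the isomorphism $A \cong A^{\vee}$ forces the multiplicity of $L$ in $A$ to equal that of $L^{\vee}$. Since $k$ is algebraically closed and $A$ is simple, Corollary~\ref{cor:Gamma-field} shows that $\bone$ appears in $A$ with multiplicity one. Grouping the non-trivial simples into dual pairs and using $\udim(L) = \udim(L^{\vee})$, taking categorical dimensions gives
\begin{displaymath}
\udim(A) = 1 + 2 \sum_{\{L, L^{\vee}\}} m_L \udim(L),
\end{displaymath}
where the sum runs over one representative of each non-trivial dual pair $\{L, L^{\vee}\}$ and $m_L$ is the common multiplicity.

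The remainder is a case split on the characteristic, which is the role of hypothesis (a). If $k$ has characteristic $2$, the second term vanishes and $\udim(A) = 1 \ne 0$. If $k$ has characteristic $0$, assume toward contradiction that $\udim(A) = 0$; then $\sum m_L \udim(L) = -\tfrac{1}{2}$, but this expression is the categorical dimension of the object $M = \bigoplus L^{\oplus m_L}$ of $\fT$, contradicting hypothesis (c). In either case $\udim(A) \ne 0$, and so $A$ is \'etale.

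The main substantive step is the self-dual decomposition of $A$, which is a direct combination of the Frobenius structure on simple algebras in semi-simple categories and the restrictive form of the simples imposed by hypothesis (b); everything downstream is elementary arithmetic. The three hypotheses are visibly tailored to force the displayed identity to be non-zero, and I do not anticipate any serious obstacle beyond being careful with multiplicities and the coincidence $\udim(L) = \udim(L^{\vee})$.
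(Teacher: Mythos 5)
Your proof is correct and follows essentially the same route as the paper: both use Corollaries~\ref{cor:Gamma-field} and~\ref{cor:simple-Frob} to obtain the decomposition $A = \bone \oplus \bigoplus (L \oplus L^\vee)^{\oplus m_L}$, deduce $\udim(A) = 1 + 2\udim(X)$ with $X = \bigoplus L^{\oplus m_L}$, and conclude non-vanishing from hypotheses (a) and (c) before invoking \S\ref{ss:etale}(c).
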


\begin{proof}
By assumption, the non-trivial simples of $\fT$ come in dual pairs; enumerate them as $\{L_i, L_i^*\}_{i \in I}$. Let $A$ be a simple algebra. By Corollary~\ref{cor:Gamma-field}, $A$ contains the trivial representation with multiplicity one, and by Corollary~\ref{cor:simple-Frob}, $A$ is self-dual. We thus have a decomposition
\begin{displaymath}
A = \bone \oplus \bigoplus_{i \in I} \big( L_i^{\oplus m_i} \oplus (L_i^*)^{\oplus m_i} \big),
\end{displaymath}
for some multiplicities $m_i$. Put
\begin{displaymath}
X = \bigoplus_{i \in I} L_i^{\oplus m_i},
\end{displaymath}
so that
\begin{displaymath}
\udim(A) = 1+ 2 \udim(X).
\end{displaymath}
Thus $\udim(A)$ is non-zero; indeed, this is clear if $k$ has characteristic~2, and follows from (c) in characteristic~0. It follows that $A$ is \'etale (\S \ref{ss:etale}(c)).
\end{proof}

\begin{corollary}
If $k$ has characteristic~0 or~2 then every simple algebra in $\uRep(\GG)$ is \'etale.
\end{corollary}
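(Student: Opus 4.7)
The plan is to deduce this corollary directly from Proposition~\ref{prop:simple-etale-crit} applied to $\fT = \uRep(\GG)$, by verifying its three hypotheses. Hypothesis~(a) is given. The category $\uRep(\GG)$ is semi-simple by \S\ref{ss:delcat}, and $k$-linear pre-Tannakian, so the background assumptions hold as well.

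For hypothesis~(b), I would recall from \S\ref{ss:simple} that the simple objects of $\uRep(\GG)$ are the $L_\lambda$ indexed by weights, with $L_\lambda^\vee = L_{\lambda^\vee}$, where $\lambda^\vee$ is obtained from $\lambda$ by swapping every $\wa$ with $\wb$. Since this involution on weights changes every letter, the only fixed point is the empty word, giving $L_\emptyset = \bone$. Hence $\bone$ is the only self-dual simple.

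For hypothesis~(c), assume $k$ has characteristic~$0$. By \cite[Corollary~5.7]{line}, $\udim(L_\lambda) = (-1)^{\ell(\lambda)} \in \{\pm 1\}$, so every object $X$ of $\uRep(\GG)$ has $\udim(X) \in \bZ$. In particular, no object has dimension $-\tfrac{1}{2}$.

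All three hypotheses thus hold, and Proposition~\ref{prop:simple-etale-crit} yields the conclusion. The argument is essentially a one-line verification once the dimensions and duals of the simples $L_\lambda$ are in hand, so there is no real obstacle; the only subtlety is making sure the dimension-integrality is properly cited.
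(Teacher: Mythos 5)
Your proof is correct and follows exactly the paper's own approach: verifying the three hypotheses of Proposition~\ref{prop:simple-etale-crit} using the duality formula $L_\lambda^\vee = L_{\lambda^\vee}$ and the dimension formula $\udim(L_\lambda) = (-1)^{\ell(\lambda)}$ from \cite{line}.
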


\begin{proof}
We must verify the hypotheses. (b) The dual of $L_{\lambda}$ is $L_{\lambda^{\vee}}$, which is only isomorphic to $L_{\lambda}$ if $\lambda$ is the empty word. (c) The dimension $L_{\lambda}$ is $(-1)^{\ell(\lambda)}$, and so the dimension of every object of $\uRep(\GG)$ is an integer. Thus in characteristic~0, no dimension is equal to $-\tfrac{1}{2}$.
\end{proof}

\begin{remark}
Let $\fT$ be a $k$-linear semi-simple pre-Tannakian category, with $k$ algebraically closed and $\operatorname{char}(k) \ne 2$. Suppose that the second Adams operation $\psi^2$ is trivial (i.e., the identity) on $\rK(\fT)$. Then condition (b) of the proposition is fulfilled, i.e., $\bone$ is the only self-dual simple; see \cite[Proposition~2.1]{delchar}. We note that if $\fT=\uRep(\GG)$ then all Adams operations are trivial on $\rK(\fT)$ \cite[Theorem~8.2]{line}.
\end{remark}

\section{Theorems~\ref*{mainthm} and~\ref*{mainthm3}} \label{s:proof}

In this section we prove the two theorems in the title. We fix a pre-Tannakian category $\fT$ for the duration of \S \ref{s:proof}. Throughout \S \ref{s:proof} we assume that the field $k$ is algebraically closed.

\subsection{The key result}

The following is the key result needed to prove the main theorems.

\begin{theorem} \label{thm:key}
Let $A$ be a simple algebra in $\uRep(\GG) \boxtimes \fT$. Assume that $A$ is \'etale or $\fT$ is semi-simple. Then there exists an open subgroup $U$ of $\GG$ such that $\Res^{\GG}_U(A)$ has a quotient of the form $\bone \boxtimes E$, where $E$ is a simple algebra in $\fT$.
\end{theorem}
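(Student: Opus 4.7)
The plan is to prove the following strengthened statement by induction on a combinatorial complexity: for every open subgroup $U \subset \GG$ and every simple algebra $A$ in $\uRep(U) \boxtimes \fT$ satisfying the hypothesis of the theorem (\'etale, or $\fT$ semi-simple), there is an open subgroup $W \subset U$ and a non-zero algebra homomorphism $\Res^U_W(A) \to \bone \boxtimes E$ with $E$ simple in $\fT$. Every open $U \subset \GG$ is isomorphic to some $\GG^s$ (\S\ref{ss:delgp}), so $\uRep(U) \cong \uRep(\GG)^{\boxtimes s}$ has simples $L_{\vec\lambda}$ indexed by $s$-tuples $\vec\lambda = (\lambda_1, \dots, \lambda_s)$ of weights. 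By semi-simplicity of $\uRep(U)$, any $A$ in $\uRep(U) \boxtimes \fT$ decomposes as $A = \bigoplus_{\vec\lambda} L_{\vec\lambda} \boxtimes A_{\vec\lambda}$ with $A_{\vec\lambda} \in \fT$, and I take the complexity $N(A)$ to be the maximum of $\max_j \ell(\lambda_j)$ over $\vec\lambda$ in the finite support. Both hypotheses on $A$ are preserved under restriction and under passage to algebra quotients (using \S\ref{ss:etale}(a) in the \'etale case), so they survive the induction.

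The base case $N(A) = 0$ gives $A = \bone \boxtimes A_{\vec\emptyset}$, and simplicity of $A$ forces $A_{\vec\emptyset}$ to be simple in $\fT$, so take $W = U$ and $E = A_{\vec\emptyset}$. For the inductive step with $N(A) = N \geq 1$, pick some $\vec\lambda_0$ in the support and an index $i$ with $\ell(\lambda_{0,i}) = N$, and pass to the open subgroup $V \subset U$ obtained by conditioning at one additional point in the $i$-th $\bR$-factor, so $V \cong \GG^{s+1}$. Applying the restriction rule of \S\ref{ss:res} in the $i$-th coordinate, the constituents of $\Res^U_V(L_{\vec\lambda})$ of maximum factor length exactly $N$ are precisely the two ``boundary'' summands (where the split $(\mu, \nu)$ of $\lambda_i$ has $\mu = \emptyset$ or $\nu = \emptyset$ and $\ell(\lambda_i) = N$), together with any pieces arising from positions $j \neq i$ where $\ell(\lambda_j) = N$; every remaining summand has max factor length strictly less than $N$. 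The plan is then to produce a simple quotient $A'$ of $\Res^U_V(A)$ with $N(A') < N(A)$, and apply the induction hypothesis to $A'$, composing with the restriction of the quotient map to obtain the desired $\Res^U_W(A) \to \bone \boxtimes E$.

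The main obstacle is the construction of the simple quotient $A'$ of reduced complexity. The key tool is the Frobenius structure on $A$: in the \'etale case the non-vanishing of $\epsilon_A$ (\S\ref{ss:etale}(c)) gives a non-zero map $A \to \bone$; in the semi-simple case Corollary \ref{cor:simple-Frob} does. Either way, Proposition \ref{prop:simple-Frob} yields $A \cong A^\vee$ as $A$-modules, a self-duality that persists under restriction. In the \'etale case, $\Res^U_V(A)$ is itself \'etale and hence decomposes as a product of simple \'etale algebras (\S\ref{ss:etale}(b)); one would show that some simple factor of this decomposition has strictly smaller complexity, exploiting the explicit restriction formula and the self-duality to control which factors occur and how the length-$N$ contributions are distributed among them. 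The semi-simple case is handled analogously, using that $\uRep(U) \boxtimes \fT$ is itself semi-simple so that short exact sequences split. Verifying that a suitable simple quotient of reduced complexity always exists is the delicate combinatorial heart of the argument and may require a refined lexicographic complexity measure tracking both $N(A)$ and an auxiliary count of length-$N$ contributions; this is precisely where the explicit restriction rule of \cite{line} enters in an essential way.
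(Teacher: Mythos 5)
Your overall strategy — induct on a complexity bounding the lengths of the Delannoy simples appearing in $A$, restrict to a $\GG(0)$-type subgroup, and use the Frobenius/self-dual structure on $A$ to find a smaller simple quotient, with a refined secondary measure counting ``boundary'' contributions — is in fact the strategy of the paper. Your base case is right, your reading of the restriction rule (the only max-length-$N$ constituents of $\Res^{\GG}_{\GG(0)}(L_\lambda)$ are the two boundary summands) is right, and you are right to suspect that a lexicographic refinement is needed, since $N(A)$ by itself need not decrease after a single restriction. What is missing is precisely the ``delicate combinatorial heart'' you defer: the proposal asserts that ``one would show that some simple factor of this decomposition has strictly smaller complexity,'' but gives no mechanism, and in fact the naive version of that claim is false — in the hard case both candidate quotients can have the same $N$ and the same number of top-length simples as $A$.

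Concretely, here is what the argument actually requires and where your sketch stops short. Let $A'=\Res^{\GG}_{\GG(0)}(A)$, and let $\fp$, $\fq$ be the ideals of $A'$ generated by the summands $\bone\boxtimes L_\lambda\boxtimes(\cdot)$ and $L_\lambda\boxtimes\bone\boxtimes(\cdot)$ with $\ell(\lambda)=N$. A short calculation from the restriction rule shows $\fp\fq=0$ (since $\ell_{\rm tot}(A')=N$ forbids any length-$N$ tensor length-$N$ summand), so either $\fp+\fq$ is a proper ideal — the easy case, where $A'/(\fp+\fq)$ already has smaller $N$ — or, by the Chinese Remainder Theorem, $A'\cong A'/\fp\oplus A'/\fq$. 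In the latter case both factors $A'/\fp$ and $A'/\fq$ still have $\ell_{\rm tot}=N$ and the \emph{same} collection of top-length pieces as $A$, so your complexity $N(\cdot)$ does not drop and the only hope is to further split one of these factors and take the piece with fewer top-length simples (this is the $t_n$ count in the paper). Getting that further splitting is the real work: in the semi-simple case one must analyze $\Gamma(A')\cong V\oplus k\oplus V^*$ as a (finite-dimensional, Frobenius) $k$-algebra and prove it has at least three primitive idempotents (this uses that neither $V$ nor $V^*$ can lie in the nilradical, via the perfect pairing); in the étale case this argument is unavailable and one has to iterate the restriction, obtaining a chain of strictly growing simple étale subalgebras $B_i^{U_i}$ of $\fT$, and derive a contradiction from the fact that each injects into $\ul{\End}(T_n(A))$, which bounds the chain length. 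Your proposal does not contain the $\fp\fq=0$ observation, the Chinese Remainder dichotomy, the primitive-idempotent count, or the $\ul{\End}(T_n(A))$ bound; these are the steps that convert the outline into a proof, and without them the induction does not close.
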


Here, and in what follows, we write $\Res^{\GG}_U$ for the functor
\begin{displaymath}
\Res^{\GG}_U \boxtimes \id \colon \uRep(\GG) \boxtimes \fT \to \uRep(U) \boxtimes \fT.
\end{displaymath}
The proof of the theorem will occupy the remainder of \S \ref{s:proof}. For now, we deduce some corollaries, including Theorems~\ref{mainthm} and~\ref{mainthm3}.

\begin{corollary} \label{cor:key}
Let $A$ be a simple algebra in $\uRep(\GG) \boxtimes \fT$. Assume that $A$ is \'etale or $\fT$ is semi-simple. Then there is an injective algebra homomorphism
\begin{displaymath}
A \to \cC(\bR^{(n)}) \boxtimes E
\end{displaymath}
for some $n$ and some simple algebra $E$ in $\fT$. Moreover, if $A$ is \'etale then so is $E$.
\end{corollary}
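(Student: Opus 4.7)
The plan is to combine Theorem~\ref{thm:key} with the adjunction in Proposition~\ref{pro:alg-adjunct}, and then extract injectivity from the simplicity of $A$ by the standard ideal argument used in Proposition~\ref{prop:Gamma-field}.

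First I would apply Theorem~\ref{thm:key} (whose hypotheses hold by assumption) to produce an open subgroup $U \subset \GG$, a simple algebra $E$ of $\fT$, and a surjective algebra homomorphism $\Res^{\GG}_U(A) \to \bone \boxtimes E$ in $\uRep(U) \boxtimes \fT$. Proposition~\ref{pro:alg-adjunct} then supplies an algebra homomorphism
\[
\phi \colon A \to \cC(\GG/U) \boxtimes E
\]
in $\uRep(\GG) \boxtimes \fT$. By the classification of open subgroups of $\GG$ recalled in \S \ref{ss:delgp}, $U = \GG(B)$ for some finite $B \subset \bR$ with $|B| = n$, and $\GG/U \cong \bR^{(n)}$, so $\phi$ has the required form $A \to \cC(\bR^{(n)}) \boxtimes E$.

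For injectivity, I would note that the kernel $K$ of $\phi$ is an ideal of $A$ (a routine consequence of the fact that $\phi$ is a commutative algebra homomorphism, so the composite $A \otimes K \to A \otimes A \to A \to \cC(\bR^{(n)}) \boxtimes E$ vanishes and therefore $A \otimes K \to A$ factors through $K$), and that $K$ is proper because $\phi$ sends the unit of $A$ to the nonzero unit of the target. Simplicity of $A$ then forces $K = 0$, yielding the desired injection.

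For the final assertion, suppose $A$ is \'etale. The restriction functor $\Res^{\GG}_U$ is a tensor functor, and the \'etale property is preserved by tensor functors (they transport trace forms to trace forms, and send isomorphisms to isomorphisms), so $\Res^{\GG}_U(A)$ is \'etale. The quotient $\bone \boxtimes E$ of $\Res^{\GG}_U(A)$ is then \'etale by \S \ref{ss:etale}(a), which forces $E$ itself to be \'etale by \S \ref{ss:etale}(d). No step here looks like a serious obstacle: all of the genuine content has already been packaged into Theorem~\ref{thm:key}, and what remains is formal.
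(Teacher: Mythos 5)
Your proposal is correct and follows essentially the same route as the paper: apply Theorem~\ref{thm:key}, transfer to $\uRep(\GG)\boxtimes\fT$ via Proposition~\ref{pro:alg-adjunct}, identify $\GG/U$ with some $\bR^{(n)}$, and get injectivity from simplicity of $A$. The only addition you make beyond the paper's terse proof is explicitly invoking that restriction, being a tensor functor, preserves the \'etale property (the paper treats this as implicit when it calls $\bone\boxtimes E$ "a quotient of $A$"), which is a worthwhile clarification rather than a deviation.
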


\begin{proof}
This follows from the theorem and Proposition~\ref{pro:alg-adjunct}. A few things to note. First, the map is injective since $A$ is simple. Second, if $A$ is \'etale then so is $\bone \boxtimes E$, as it is a quotient of $A$, and thus so is $E$ (see \S \ref{ss:etale}). And third, $\GG/U$ is a transitive $\GG$-set, and thus isomorphic to $\bR^{(n)}$ for some $n$.
\end{proof}

The following corollary is Theorem~\ref{mainthm}.

\begin{corollary}
Any simple algebra in $\uRep(\GG)$ is isomorphic to some $\cC(\bR^{(n)})$.
\end{corollary}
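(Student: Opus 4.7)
The plan is to derive this corollary as a direct specialization of Corollary~\ref{cor:key} to the trivial auxiliary category $\fT = \Vec$. First I would observe that $\Vec$ is a (trivially) semi-simple pre-Tannakian category and that the Deligne product $\uRep(\GG) \boxtimes \Vec$ is canonically equivalent to $\uRep(\GG)$, so any simple algebra $A$ in $\uRep(\GG)$ qualifies as a simple algebra in $\uRep(\GG) \boxtimes \fT$ meeting the semi-simplicity hypothesis of Corollary~\ref{cor:key}.

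Next I apply Corollary~\ref{cor:key} to produce an injective algebra homomorphism $A \hookrightarrow \cC(\bR^{(n)}) \boxtimes E$ for some $n \ge 0$ and some simple commutative algebra $E$ in $\Vec$. Since $k$ is algebraically closed (the standing assumption in \S\ref{s:proof}), a simple commutative $k$-algebra is just $k$ itself, so $E = \bone$ and the injection becomes $A \hookrightarrow \cC(\bR^{(n)})$. In other words, $A$ is realized as a simple subalgebra of a Schwartz space.

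Finally, Proposition~\ref{prop:schwartz-sub} already classifies the simple subalgebras of $\cC(\bR^{(n)})$ as the Schwartz spaces $\cC(\bR^{(m)})$ with $m \le n$, which gives the desired isomorphism $A \cong \cC(\bR^{(m)})$. All of the real work has been packaged into Theorem~\ref{thm:key} (and hence Corollary~\ref{cor:key}) together with the classification of E-idempotents in Proposition~\ref{prop:E-idemp} used by Proposition~\ref{prop:schwartz-sub}; from the vantage point of this corollary, there is no obstacle — it is a routine two-line deduction, and the genuine difficulty lies upstream in proving Theorem~\ref{thm:key}, which the excerpt defers to the remainder of \S\ref{s:proof}.
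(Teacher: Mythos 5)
Your proposal is correct and matches the paper's own proof exactly: both derive the statement from Corollary~\ref{cor:key} (the paper leaving the specialization $\fT = \Vec$, $E = \bone$ implicit) and then invoke Proposition~\ref{prop:schwartz-sub} to identify the resulting simple subalgebra of $\cC(\bR^{(n)})$ with some $\cC(\bR^{(m)})$. You have simply made the instantiation explicit, which is a fine thing to do.
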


\begin{proof}
Let $A$ be a simple algebra in $\uRep(\GG)$. By Corollary~\ref{cor:key}, we see that $A$ is a subalgebra of $\cC(\bR^{(n)})$ for some $n$, and so $A$ is isomorphic to some $\cC(\bR^{(m)})$ (Proposition~\ref{prop:schwartz-sub}).
\end{proof}

The following corollary gives the primary statement in Theorem~\ref{mainthm3}; the secondary statement (giving the precise form of $A$) will be a consequence of Theorem~\ref{mainthm2}, proved in the subsequent section.

\begin{corollary}
Suppose $\fT$ is semi-simple and all simple algebras in $\fT$ are \'etale. Then any simple algebra $A$ in $\uRep(\GG) \boxtimes \fT$ is \'etale.
\end{corollary}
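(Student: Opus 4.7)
The plan is to deduce this as a direct consequence of Corollary~\ref{cor:key} combined with the criterion that, in a semi-simple pre-Tannakian category, every simple sub\'etale algebra is \'etale (Remark~\ref{rmk:simple-subet}). Since $\fT$ is semi-simple by hypothesis, the hypothesis ``$A$ is \'etale or $\fT$ is semi-simple'' of Corollary~\ref{cor:key} is satisfied, so I would immediately apply it to $A$, obtaining an injective algebra homomorphism $A \hookrightarrow \cC(\bR^{(n)}) \boxtimes E$ for some $n$ and some simple algebra $E$ in $\fT$.

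Next, I would upgrade $E$ from simple to \'etale using the standing assumption on $\fT$. Since $\cC(\bR^{(n)})$ is \'etale in $\uRep(\GG)$ (\S\ref{ss:schwartz}) and external products of \'etale algebras are \'etale (\S\ref{ss:etale}(d)), the algebra $\cC(\bR^{(n)}) \boxtimes E$ is \'etale in $\uRep(\GG) \boxtimes \fT$. This exhibits $A$ as a subalgebra of an \'etale algebra, i.e., $A$ is sub\'etale.

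Finally, the Deligne tensor product $\uRep(\GG) \boxtimes \fT$ is semi-simple, since both factors are. I would then invoke Remark~\ref{rmk:simple-subet}: any simple sub\'etale algebra in a semi-simple pre-Tannakian category is \'etale. Applied to $A$, this gives the conclusion. In this corollary the argument is essentially a bookkeeping matter after Corollary~\ref{cor:key} (whose proof in turn rests on Theorem~\ref{thm:key}); the only mild subtlety is checking that each piece has the form required to apply the semi-simple sub\'etale criterion, which is transparent from the construction.
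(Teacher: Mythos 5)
Your proof is correct and follows the paper's own argument: both deduce sub\'etaleness from Corollary~\ref{cor:key} (using the standing hypothesis on $\fT$ to promote $E$ to \'etale) and then conclude via the semi-simple sub\'etale criterion (Corollary~\ref{cor:simple-sub}/Remark~\ref{rmk:simple-subet}). You have simply spelled out the intermediate bookkeeping, including the needed observation that $\uRep(\GG) \boxtimes \fT$ is semi-simple, which the paper leaves implicit.
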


\begin{proof}
$A$ is sub\'etale (Corollary~\ref{cor:key}), and so $A$ is \'etale (Corollary~\ref{cor:simple-sub}).
\end{proof}

\subsection{Notation}

We now introduce some notation that will be used throughout \S \ref{s:proof}. Let $M$ be an object of $\uRep(\GG)^{\boxtimes s} \boxtimes \fT$. We then have a decomposition
\begin{equation} \label{eq:decomp}
M = \bigoplus_{\lambda_1, \ldots, \lambda_s} \big( L_{\lambda_1} \boxtimes \cdots \boxtimes L_{\lambda_s} \boxtimes M_{\lambda_1, \ldots, \lambda_s} \big),
\end{equation}
where each $\lambda_i$ is a weight and $M_{\lambda_1, \ldots, \lambda_s}$ is an object of $\fT$. We define $\ell_i(M)$ to be the maximum value of $\ell(\lambda_i)$ over tuples $(\lambda_1, \ldots, \lambda_s)$ for which $M_{\lambda_1, \ldots, \lambda_s}$ is non-zero. Similarly, we define $\ell_{\rm tot}(M)$ to be the maximum value of $\ell(\lambda_1)+\cdots+\ell(\lambda_s)$ over similar such tuples. When $s=1$, we write $\ell(M)$ for $\ell_1(M)=\ell_{\rm tot}(M)$. We define $T_n(M)$ to be the direct sum of the objects $M_{\lambda_1, \ldots, \lambda_s}$ over tuples $(\lambda_1, \ldots, \lambda_s)$ with $\ell(\lambda_i) \ge n$ for some $i$, and we let $t_n(M)$ be the length of the object $T_n(M)$. We only apply this when $n=\ell_{\rm tot}(M)$. In this case, if $\ell(\lambda_i)=n$ then the other $\lambda_j$'s are empty, and so one can think of $t_n(M)$ as counting the number of length $n$ Delannoy simples that appear in $M$.

Suppose $U$ is an open subgroup of $\GG$. We apply the same definitions above to objects of $\uRep(U) \boxtimes \fT$ via the identification $\uRep(U)=\uRep(\GG)^{\boxtimes s}$. If $M$ is an object of $\uRep(\GG) \boxtimes \fT$ and $M'$ is its restriction to $\uRep(U) \boxtimes \fT$ then the restriction rules for Delannoy show that $\ell_i(M')=\ell(M)$ for all $i$, and $\ell_{\rm tot}(M')=\ell(M)$.

\subsection{The general strategy} \label{ss:step1}

Let $A$ be a simple algebra in $\uRep(\GG) \boxtimes \fT$. We must eventually find an open subgroup $U$ of $\GG$ and a surjective algebra homomorphism $\Res^{\GG}_U(A) \to \bone \boxtimes E$, where $E$ is a simple algebra in $\fT$. Equivalently, we want a surjective algebra homomorphism $\Res^{\GG}_U(A) \to B$, where $B$ is a simple algebra in $\uRep(U) \boxtimes \fT$ with $\ell_{\rm tot}(B)=0$. Rather than finding such a $B$ right away, we first aim to simply find a surjective algebra homomorphism $\Res^{\GG}_U(A) \to B$, where $B$ is smaller than $A$, as measured by $\ell_i$ or $t_n$. By iterating this procedure, we will eventually find a quotient with $\ell_{\rm tot}=0$.

We now execute the first step in this plan. Put $n=\ell(A)$, and assume $n>0$. Let $A'$ be the restriction of $A$ to $\uRep(\GG(0)) \boxtimes \fT$, and decompose $A'$ as in \eqref{eq:decomp}. Let $\fp$, resp.\ $\fq$, be the ideal of $A'$ generated by the summands $\bone \boxtimes L_{\lambda} \boxtimes A'_{\emptyset,\lambda}$, resp.\ $L_{\lambda} \boxtimes \bone \boxtimes A'_{\lambda,\emptyset}$, with $\ell(\lambda)=n$. We have the following simple observation:

\begin{proposition} \label{prop:step1}
We have $\fp\fq=0$. Moreover, we are in one of the following cases:
\begin{enumerate}
\item $\fp+\fq$ is a proper ideal of $A'$.
\item The natural map $A' \to A'/\fp \oplus A'/\fq$ is an isomorphism.
\end{enumerate}
\end{proposition}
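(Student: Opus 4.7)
Put $P = \bigoplus_{\ell(\lambda) = n} \bone \boxtimes L_\lambda \boxtimes A'_{\emptyset, \lambda}$ and $Q = \bigoplus_{\ell(\mu) = n} L_\mu \boxtimes \bone \boxtimes A'_{\mu, \emptyset}$, so that $\fp$ and $\fq$ are the images of the multiplication maps $A' \otimes P \to A'$ and $A' \otimes Q \to A'$ respectively. The only input I will use is the invariance of $\ell_{\rm tot}$ under restriction (noted at the end of \S\ref{ss:step1}'s notational paragraph): this gives $\ell_{\rm tot}(A') = \ell_{\rm tot}(A) = n$, which forces $A'_{\mu, \lambda} = 0$ whenever $\ell(\mu) + \ell(\lambda) > n$, and in particular whenever $\ell(\mu) = \ell(\lambda) = n > 0$.

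To show $\fp\fq = 0$, I first establish that the smaller product $P \cdot Q$ already vanishes. As a subobject of $A' \otimes A'$, the tensor product decomposes as
\begin{displaymath}
P \otimes Q \;=\; \bigoplus_{\ell(\mu) = \ell(\lambda) = n} L_\mu \boxtimes L_\lambda \boxtimes (A'_{\emptyset, \lambda} \otimes A'_{\mu, \emptyset}),
\end{displaymath}
and therefore sits inside the isotypic components of $A' \otimes A'$ indexed by pairs $(\mu, \lambda)$ with $\ell(\mu) = \ell(\lambda) = n$. Since $\uRep(\GG)$ is semisimple, the multiplication $A' \otimes A' \to A'$ is $\uRep(\GG)^{\boxtimes 2}$-linear and hence maps each such isotypic component into the corresponding isotypic summand $L_\mu \boxtimes L_\lambda \boxtimes A'_{\mu, \lambda}$ of $A'$, which is zero by the previous paragraph. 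Thus $P \cdot Q = 0$, and then associativity and commutativity of the multiplication in $A'$ upgrade this to
\begin{displaymath}
\fp \cdot \fq \;=\; (A' \cdot P)(A' \cdot Q) \;=\; (A' \cdot A')(P \cdot Q) \;=\; 0.
\end{displaymath}

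For the dichotomy, suppose we are not in case (a), i.e.\ $\fp + \fq = A'$. The standard identity
\begin{displaymath}
\fp \cap \fq \;=\; (\fp + \fq)(\fp \cap \fq) \;\subseteq\; \fp \cdot \fq \;=\; 0
\end{displaymath}
then forces $\fp \cap \fq = 0$, so the natural map $A' \to A'/\fp \oplus A'/\fq$ is surjective (because $\fp + \fq = A'$) and injective (because $\fp \cap \fq = 0$), which is case (b). The one non-formal point in this whole argument is the vanishing $P \cdot Q = 0$, and this is immediate once one combines the $\ell_{\rm tot}$ bound on $A'$ with the fact that morphisms in $\uRep(\GG)^{\boxtimes 2} \boxtimes \fT$ preserve isotypic decompositions in the first two factors.
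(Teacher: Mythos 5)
Your proof is correct and follows essentially the same route as the paper: both reduce $\fp\fq=0$ to the vanishing $PQ=0$, which is established by observing that $P\otimes Q$ lives entirely in isotypic components indexed by pairs $(\mu,\lambda)$ with $\ell(\mu)=\ell(\lambda)=n$, while $A'_{\mu,\lambda}=0$ for all such pairs since $\ell_{\rm tot}(A')=n$. The Chinese-remainder step for the dichotomy is also the same, so there is nothing to add.
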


\begin{proof}
For subobjects $X$ and $Y$ of $A'$, let $XY$ be the image of $X \otimes Y$ under the multiplication map $A' \otimes A' \to A'$. Let $P$ be the sum of the objects $\bone \boxtimes L_{\lambda} \boxtimes A'_{\emptyset,\lambda}$ with $\ell(\lambda)=n$, so that $\fp = A' P$, and let $Q$ be defined analogously so that $\fq=A'Q$. Now, we have
\begin{displaymath}
P \otimes Q = \bigoplus \big( L_{\lambda} \boxtimes L_{\mu} \boxtimes (A'_{\lambda, \emptyset} \otimes A'_{\emptyset,\mu}) \big),
\end{displaymath}
where the sum is over weights $\lambda$ and $\mu$ of length $n$. Since $\ell_{\rm tot}(A')=n$, we have $A'_{\lambda,\mu}=0$ if $\ell(\lambda)=\ell(\mu)=n$, and so none of the summands above admit a non-zero map to $A'$. Thus $PQ=0$, and so $\fp\fq=0$. (Note that since $\fp$ and $\fq$ are non-zero, this shows that neither $\fp$ nor $\fq$ is the unit ideal.)

Suppose $\fp+\fq$ is the unit ideal. Since $\fp\fq=0$, the Chinese remainder theorem implies that (b) holds. We recall the proof. Multiplying the equation $A'=\fp+\fq$ by $\fp \cap \fq$, we find
\begin{displaymath}
\fp \cap \fq = \fp (\fp \cap \fq) + \fq (\fp \cap \fq) \subset \fp \fq
\end{displaymath}
and so $\fp \cap \fq=0$. Thus $A'=\fp \oplus \fq$, from which (b) follows.
\end{proof}

In case (a), $A'/(\fp+\fq)$ is a non-zero algebra that is smaller than $A$, in the sense that $\ell_i(A'/(\fp+\fq))<n$ for each $i=1,2$. We can take our $B$ to be any simple quotient of this algebra. Thus in this case, finding $B$ is quite easy.

In case (b), more work is required. Before starting on it, we make some general observations. Decompose $A$ as in \eqref{eq:decomp}. We have
\begin{displaymath}
A = \bigoplus_{\ell(\lambda)=n} \big(  L_{\lambda} \boxtimes A_{\lambda} \big) \oplus \cdots
\end{displaymath}
where the remaining terms only use simple Delannoy representations of length $<n$. By the restriction rule for Delannoy, we thus have
\begin{displaymath}
A' = \bigoplus_{\ell(\lambda)=n} \big( (L_{\lambda} \boxtimes \bone \boxtimes A_{\lambda}) \oplus (\bone \boxtimes L_{\lambda} \boxtimes A_{\lambda}) \big) \oplus \cdots
\end{displaymath}
where, again, the omitted terms only use simples of length $<n$. The first terms above all belong to $\fq$, and the second all belong to $\fp$. We thus have
\begin{displaymath}
A'/\fp = \bigoplus_{\ell(\lambda)=n} \big( L_{\lambda} \boxtimes \bone \boxtimes A_{\lambda} \big) \oplus \cdots
\end{displaymath}
and similarly for $A'/\fq$. We thus have
\begin{displaymath}
T_n(A'/\fp)=T_n(A'/\fq)=T_n(A) = \bigoplus_{\ell(\lambda)=n} A_{\lambda}.
\end{displaymath}
We also have $\ell_1(A'/\fp)=n$ and $\ell_2(A'/\fp)<n$, and similarly for $A'/\fq$. Thus, by our various measures, $A'/\fp$ and $A'/\fq$ have the same size as $A$. If we can decompose one of these algebras into a non-trivial direct product, some factor will be smaller than $A$, and we can take $B$ to be a simple quotient of that factor. We establish such a decomposition below.

\begin{remark}
If $A$ is the algebra $\cC(\bR^{(n)})$ in $\uRep(\GG)$ with $n \ge 1$ then we are in case~(a); this is easily seen from the explicit decomposition of $A'$ into simple \'etale algebras, which corresponds to the orbit decomposition of $\GG(0)$ on $\bR^{(n)}$. We expect that we are always in case~(a), but we cannot prove this in general.
\end{remark}

\subsection{The semi-simple case} \label{ss:proof-ss}

Throughout \S \ref{ss:proof-ss}, we assume that $\fT$ is semi-simple. We will show that in case~(b) of Proposition~\ref{prop:step1}, one of the algebras factors. This will yield the following result, which is really what we are after.

\begin{proposition} \label{prop:step2-ss}
Let $A$ be a simple algebra in $\uRep(\GG) \boxtimes \fT$ with $n=\ell(A)$ positive. Then $\Res^{\GG}_{\GG(0)}(A)$ has a simple quotient $B$ such that $\ell_1(B)<n$ or $\ell_2(B)<n$, and $t_n(B) < t_n(A)$.
\end{proposition}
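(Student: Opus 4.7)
My plan is to treat the two cases of Proposition~\ref{prop:step1} separately.

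In case~(a), where $\fp+\fq$ is a proper ideal of $A'$, I take $B$ to be any simple quotient of the non-zero algebra $A'/(\fp+\fq)$. From the restriction rule of \S\ref{ss:res}, the only summands of $A'$ with $\ell_1=n$ are of the form $L_\lambda\boxtimes\bone\boxtimes A_\lambda$ with $\ell(\lambda)=n$, which lie in $\fq$; symmetrically, those with $\ell_2=n$ lie in $\fp$. Thus $A'/(\fp+\fq)$ has $\ell_i<n$ for both $i$, hence so does $B$, giving $t_n(B)=0<t_n(A)$ (note $t_n(A)>0$ since $\ell(A)=n\geq 1$).

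In case~(b), where $A'\cong A'/\fp\oplus A'/\fq$ as algebras, the goal is to exhibit a non-trivial direct product decomposition of one of the two factor algebras. Given such a decomposition $A'/\fp=X_1\times X_2$ with both $X_i$ non-zero, the splitting $T_n(A'/\fp)=T_n(X_1)\oplus T_n(X_2)$ shows that some $X_i$ has $t_n(X_i)<t_n(A'/\fp)=t_n(A)$; any simple quotient $B$ of that $X_i$ then satisfies $t_n(B)\leq t_n(X_i)<t_n(A)$ and $\ell_2(B)\leq\ell_2(A'/\fp)<n$. The main tool for producing such a decomposition is the Frobenius structure: since $\fT$ is semi-simple and $A$ is simple, Corollary~\ref{cor:simple-Frob} gives a Frobenius counit, hence $A\cong A^\vee$ and so $A_\lambda\cong A_{\lambda^\vee}^\vee$ for every weight $\lambda$; in particular $\dim_k\Gamma(A_\wa)=\dim_k\Gamma(A_\wb)=:m$. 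Combining the restriction rule with Proposition~\ref{prop:G0-invar}, one computes $A'_{\emptyset,\emptyset}=A_\emptyset\oplus A_\wa\oplus A_\wb$, so $\dim_k\Gamma(A')=1+2m$, which is always odd. Since $\Gamma(A')\cong\Gamma(A'/\fp)\times\Gamma(A'/\fq)$ in case~(b), if both factors were simple then each invariant algebra would equal $k$ by Proposition~\ref{prop:Gamma-field} and algebraic closedness, giving $\dim_k\Gamma(A')=2$ and contradicting the parity. Hence at least one of $A'/\fp, A'/\fq$ is non-simple.

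The main obstacle will be to upgrade ``non-simple'' to ``non-trivial direct product,'' i.e., to rule out that the non-simple factor has a local (non-field) invariant algebra, in which case it would admit no orthogonal idempotent decomposition. I expect to handle this by exploiting the full Frobenius structure on $R:=\Gamma(A')$: it is itself a commutative Frobenius $k$-algebra whose counit is the projection onto the $\Gamma(A_\emptyset)=k$ summand (since $\lambda\colon A\to\bone$ factors through the $L_\emptyset$-isotypic piece of $A$), and the induced pairing between $\Gamma(A_\wa)$ and $\Gamma(A_\wb)$ is non-degenerate (the two pieces are self-orthogonal but paired with each other). Writing the idempotent $e\in R$ corresponding to $\fp$ as $e=1+e_\wa+e_\wb$ with $e_\wa\in\Gamma(A_\wa)$, $e_\wb\in\Gamma(A_\wb)$ and expanding $e^2=e$ produces orthogonality constraints that, combined with the hyperbolic pairing data, should force the existence of a further primitive idempotent --- giving the desired decomposition. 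This rigidity argument is where the main subtlety of the proof lies.
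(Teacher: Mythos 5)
Your case~(a) argument is correct and matches the paper. Your case~(b) setup is also on the right track and even includes a nice observation the paper does not use: that $\dim_k\Gamma(A')=1+2\dim_k V$ is odd, hence at least one of $\Gamma(A'/\fp),\Gamma(A'/\fq)$ is not $k$, so at least one factor is non-simple. However, as you yourself flag, this is strictly weaker than what is needed. A finite-dimensional commutative $k$-algebra can fail to be a field while still being local (e.g.\ $k[x]/(x^2)$), in which case it has no non-trivial idempotent and the corresponding factor of $A'$ does \emph{not} decompose as a product. You acknowledge this is the main subtlety and sketch a direction (``expanding $e^2=e$ produces orthogonality constraints that \ldots should force the existence of a further primitive idempotent''), but you do not actually carry it out, and it is not obvious that the naive expansion closes the gap: $\lambda$ is not a ring homomorphism on all of $R$ (it only becomes one after restricting to the subalgebra $k\oplus V^*$), so the constraints you get from $e^2=e$ mix the three graded pieces in a way that needs care.

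The paper closes this gap with Lemma~\ref{lem:step2-ss}, whose key steps are: the nilradical of the subalgebra $V\oplus k$ lies inside $V$ (because $\lambda\colon k\oplus V^*\to k$ \emph{is} an algebra map, by the multiplicativity $V^*\cdot V^*\subset V^*$); if $V$ were contained in the nilradical of $R$, then for $x\in V$, $y\in V^*$ the pairing $(x,y)_\lambda$ would be the $k$-component of the nilpotent element $xy$ and hence zero, contradicting perfectness of the $V$--$V^*$ pairing; therefore $V\oplus k$ modulo its nilradical has dimension $\ge 2$, producing a non-zero idempotent $x\in V$, and symmetrically a non-zero idempotent $y\in V^*$, which together force $R$ to have at least three primitive idempotents. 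This is precisely the ``rigidity'' you were hoping for, and it is genuinely more than a parity count. So: right framework, right ingredients, one correct simplification (parity) --- but the decisive step is missing, and the missing step is where the real work lives.
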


Fix the algebra $A$ for the duration of \S \ref{ss:proof-ss}, and let $A'$ be its restriction to $\GG(0)$. Note that $\Gamma(A)=k$ (Corollary~\ref{cor:Gamma-field}). Let $V$ be the multiplicity space of $L_{\wa} \boxtimes \bone$ in $A$, which is a finite dimensional vector space. Since $A$ is self-dual (Corollary~\ref{cor:simple-Frob}), the multiplicity space of $L_{\wb} \boxtimes \bone$ is naturally identified with the dual space $V^*$. Let $R=\Gamma(A')$ be the invariant algebra of $A'$. This is a finite dimensional commutative $k$-algebra, and decomposes as
\begin{displaymath}
R = V \oplus k \oplus V^*.
\end{displaymath}
by Proposition~\ref{prop:G0-invar}. The following lemma tells us about the structure of $R$.

\begin{lemma} \label{lem:step2-ss}
Suppose $V \ne 0$. We have the following:
\begin{enumerate}
\item The projection $\lambda \colon R \to k$ gives $R$ the structure of a Frobenius algebra.
\item The subspaces $V$ and $V^*$ of $R$ are closed under multiplication.
\item The subspaces $V$ and $V^*$ pair perfectly under the pairing $(,)_{\lambda}$.
\item Neither $V$ nor $V^*$ is contained in the nilradical of $R$.
\item The algebra $R$ has at least three primitive idempotents.
\end{enumerate}
\end{lemma}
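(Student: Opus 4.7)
My plan is to exploit (i) the Frobenius self-duality of $A$ guaranteed by Corollary~\ref{cor:simple-Frob}, since $\uRep(\GG) \boxtimes \fT$ is semisimple, and (ii) the Delannoy tensor rule $L_\wa \otimes L_\wa = L_{\wa\wa}^{\oplus 2} \oplus L_\wa$ from \S\ref{ss:simple} together with the restriction formula of \S\ref{ss:res}. Write $A = \bigoplus_\mu L_\mu \boxtimes A_\mu$ and fix $\lambda_A \colon A \to \bone$ to be a left inverse of the unit; since $\lambda_A$ must vanish on $L_\mu \boxtimes A_\mu$ for $\mu \ne \emptyset$, the induced map $\lambda \colon R \to k$ coincides with projection onto the middle summand $k = \Gamma(A_\emptyset)$. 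For (a), the pairing $A \otimes A \to \bone$ identifies $A \cong A^\vee$, and this iso restricts to $A' \cong (A')^\vee$; applying $\Gamma$ and the canonical identification $\Gamma(X^\vee) = \Gamma(X)^*$ yields an iso $R \to R^*$, which a diagram chase identifies with $r \mapsto \lambda(r \cdot -)$. For (b), unpack $m_A$ on $L_\wa \boxtimes A_\wa \otimes L_\wa \boxtimes A_\wa$: its image in $A$ lies in the $L_{\wa\wa}$- and $L_\wa$-summands, and after restriction to $\GG(0)$ and $\Gamma$ the $L_{\wa\wa}$-piece dies because $\Res L_{\wa\wa}$ contains no $\bone \boxtimes \bone$; only $L_\wa$ survives, so $V \cdot V \subseteq V$ and (symmetrically) $V^* \cdot V^* \subseteq V^*$. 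Combining (a) and (b): $\lambda$ vanishes on $V \cdot V$, $V \cdot k$, $V^* \cdot V^*$, $V^* \cdot k$, so the Frobenius pairing on $R = V \oplus k \oplus V^*$ splits into independent pairings on $(V, V^*)$ and $(k, k)$, both perfect by non-degeneracy, which is (c).

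For (d), I argue by contradiction. Assume $V \subseteq \mathrm{nil}(R)$ and choose $v \in V$ with $v^n = 0$ but $v^{n-1} \ne 0$; by (c), there is $w \in V^*$ with $q := \lambda(v^{n-1} w) \ne 0$. Decompose $v^{n-1} w = p + q + s$ with $p \in V$, $s \in V^*$, and expand the identity $0 = v \cdot (v^{n-1} w) = v^n w$: projecting onto the three summands of $R$ and using $V \cdot V \subseteq V$ gives $\lambda(vs) = 0$ and that the $V^*$-component of $vs$ vanishes. Iterating this analysis through the identities $v^k(v^{n-k} w) = 0$ for varying $k$ and combining with the perfect pairing from (c) eventually forces $q = 0$, a contradiction. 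The mechanism is clearest when $\dim V = 1$: then $V = kv$ with $v^2 = 0$ (forced by nilpotency and $V \cdot V \subseteq V$) and $vw = p v + q + s w$ for scalars $p, q, s$; expanding $0 = v(vw) = (q + sp) v + s q + s^2 w$ gives $s^2 = 0$, hence $s = 0$, hence $q = 0$. The higher-dimensional case runs the same logic in matrix form but with considerably more bookkeeping; this is the main obstacle of the proof.

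For (e), pass to $\pi \colon R \to R/\mathrm{nil}(R) \cong k^m$. The key observation is that $\pi(V) \not\subseteq k \cdot 1$: if $\pi(V) \subseteq k \cdot 1$, then since (d) gives $\pi(V) \ne 0$ there is $v_0 \in V$ with $\pi(v_0) = 1$, so $v_0 - 1 \in \mathrm{nil}(R)$; but in the direct-sum decomposition of $R$, the $k$-component of $(v_0 - 1)^N$ works out to $(-1)^N \ne 0$ (using that $v_0^j \in V$ for all $j \ge 1$ by (b)), contradicting $(v_0 - 1)^N = 0$. Likewise $\pi(V^*) \not\subseteq k \cdot 1$. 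Thus $\pi(V)$ and $\pi(V^*)$ are non-unital subalgebras of $k^m$ not contained in the diagonal $k \cdot 1 = \pi(k)$, and together with $\pi(k)$ they span $k^m$; this immediately rules out $m = 1$, while $m = 2$ is excluded by a case analysis of the non-unital subalgebras of $k^2$ combined with the perfect pairing from (c), so $m \ge 3$.
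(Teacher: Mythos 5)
Your items (a)--(c) are essentially correct. For (a) you invoke $\Gamma(X^\vee) = \Gamma(X)^*$ and restrict the self-duality of $A$; this works (using semi-simplicity of $\uRep(\GG(0))\boxtimes\fT$), and is a mild variant of the paper's argument, which instead shows directly that $R$ and its complement $X$ in $A'$ are orthogonal under the pairing inherited from $A$. Your (b) and (c) coincide with the paper's.

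The real issues are in (d) and (e). For (d), you verify only the case $\dim V = 1$ and explicitly defer ``the higher-dimensional case... with considerably more bookkeeping''; that is a genuine gap, and it is not clear that iterating $v^k(v^{n-k}w)=0$ in matrix form actually closes it, since one must track the $V$-component $p$ of $v^{n-1}w$ through products with $V^*$-elements, which land back in all of $R$. The paper sidesteps this entirely: by (b), $k\oplus V^*$ is a subalgebra in which $V^*$ is an ideal, so the restriction of $\lambda$ to $k\oplus V^*$ is a $k$-algebra homomorphism and hence kills nilpotents. Thus for \emph{any} $x\in V\subset\mathrm{nil}(R)$ and $y\in V^*$, writing $xy = a+b+c$ one sees $b+c = xy-a$ is nilpotent, so $b = \lambda(b+c) = 0$; hence $(\cdot,\cdot)_\lambda$ vanishes on $V\times V^*$, contradicting (c). No choice of a ``maximal'' nilpotent element is needed.

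For (e), your reduction to $\pi\colon R\to R/\mathrm{nil}(R)\cong k^m$ and the observation $\pi(V)\not\subseteq k\cdot 1$ are fine and correctly rule out $m=1$, but the elimination of $m=2$ is only asserted (``a case analysis... combined with the perfect pairing from (c)''), not carried out, and it is not a triviality: for instance, when $\pi(V)$ and $\pi(V^*)$ are both the same coordinate axis, or one of them is all of $k^2$, the perfectness of $\lambda$ on $R$ does not obviously descend to a contradiction mod nilradical, since $\lambda$ need not vanish on $\mathrm{nil}(R)$. The paper avoids this by producing idempotents explicitly: using that $\lambda|_{V\oplus k}$ is an algebra homomorphism, $\mathrm{nil}(V\oplus k)\subseteq V$, and by (d) it is a proper subspace of $V$, so $(V\oplus k)/\mathrm{nil}$ has dimension $\ge 2$ and hence $V\oplus k$ contains a nonzero idempotent $x$ with $\lambda(x)=0$, i.e.\ $x\in V$; likewise a nonzero idempotent $y\in V^*$. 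Since $y\notin\{0,1,x,1-x\}$, at least one of $xy$, $(1-x)y$ is a nontrivial idempotent in the corresponding factor of $R=xR\oplus(1-x)R$, giving a third primitive idempotent. You should either supply the missing $m=2$ case in full or adopt this idempotent-lifting route.
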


\begin{proof}
(a) Let $\mu \colon A \to \bone$ be the projection map in $\uRep(\GG) \boxtimes \fT$, i.e., the unique map that is left-inverse to the unit $\bone \to A$. The pairing $(,)_{\mu}$ on $A$ is perfect by Proposition~\ref{prop:simple-Frob}, and this remains the case for the same pairing on the object $A'$ in $\uRep(\GG(0)) \boxtimes \fT$. Now, $A'$ decomposes as $R \oplus X$, where $X$ is the sum of the non-trivial simples in $A'$. Since $X$ is contained in the kernel of $\mu$, and the multiplication map $R \otimes X \to A'$ lands in $X$, it follows that $R$ and $X$ are orthogonal under $(,)_{\mu}$. Thus $(,)_{\mu}$ restricts to a perfect pairing on $R$. Since $\mu$ restricts to $\lambda$ on $R$, the result follows.

(b) The statement about $V$ follows since $L_{\wa} \otimes L_{\wa}$ does not contain $\bone$ or $L_{\wb}$ as a summand (\S \ref{ss:simple}). The statement about $V^*$ is similar.

(c) If $x \in V$ and $y \in V \oplus k$ then $xy \in V$ by (b), and so $(x,y)_{\lambda}=0$. Thus $V$ is orthogonal to $V \oplus k$, and so must pair perfectly with $V^*$.

(d) Suppose $V$ is contained in the nilradical. Let $x \in V$ and let $y \in V^*$. Then $xy$ decomposes as $a+b+c$, where $a \in V$, $b \in k$, and $c \in V^*$. Since $xy$ and $a$ are nilpotent, so is $b+c$. But $b+c$ lives in $k \oplus V^*$, which is a subalgebra by (b). Since $V^*$ is an ideal of this subalgebra, again by (b), it follows that the map $\lambda \colon k \oplus V^* \to k$ is an algebra homomorphism, and so $b=\lambda(b+c)$ is nilpotent, and thus vanishes. Hence $(x,y)_{\lambda}=b=0$. Since $x$ and $y$ are arbitrary, this shows that $(,)_{\lambda}$ induces the zero pairing between $V$ and $V^*$. This contradicts (c), since $V$ is non-zero.

(e) The nilradical of $V \oplus k$ is contained in $V$; indeed, if $a+b$ is nilpotent then $b=\lambda(a+b)$ vanishes, as in the previous paragraph. We thus see that the quotient of the algebra $V \oplus k$ by its nilradical has dimension at least two by (d). It follows that there is a non-zero idempotent $x \in V$. Similarly, there is a non-zero idempotent $y \in V^*$. Now, $x$ gives a non-trivial decomposition $R=xR \oplus (1-x)R$. Since $y$ is not equal to 0, 1, $x$, or $1-x$, it follows that $xy$ gives a non-trivial decomposition of $xR$, or $(1-x)y$ gives a non-trivial decomposition of $(1-x)R$, or both. Thus $R$ decomposes into a product with at least three non-trivial factors, and so the result follows.
\end{proof}

With the above result in hand, we can now prove the proposition.

\begin{proof}[Proof of Proposition~\ref{prop:step2-ss}]
Let $\fp$ and $\fq$ be the ideals of $A'$ as in \S \ref{ss:step1}. If $\fp+\fq$ is not the unit ideal then we can take $B$ to be any simple quotient of $A'/(\fp+\fq)$. Since $\ell_i(B)<n$ for $i=1,2$, we have $t_n(B)=0$, and so $t_n(B)<t_n(A)$, as required.

Now suppose $\fp+\fq$ is the unit ideal, and so $A'=A'/\fp \oplus A'/\fq$ by Proposition~\ref{prop:step1}. We have
\begin{displaymath}
\Gamma(A') = \Gamma(A'/\fp) \oplus \Gamma(A'/\fq).
\end{displaymath}
Thus $\Gamma(A')$ is at least two dimensional, and so $V \ne 0$. Since $\Gamma(A')$ has at least three primitive idempotents (Lemma~\ref{lem:step2-ss}), it follows that at least one of $\Gamma(A'/\fp)$ or $\Gamma(A'/\fq)$ has a non-trivial idempotent; without loss of generality, suppose that $A'/\fp$ does. Then $A'/\fp$ factors as $B_1 \oplus B_2$, where $B_1$ and $B_2$ are non-zero algebras. We have $T_n(A'/\fp)=T_n(B_1) \oplus T_n(B_2)$, and so one of $t_n(B_1)$ or $t_n(B_2)$ is strictly less than $t_n(A'/\fp)=t_n(A)$; without loss of generality, say $t_n(B_1)<t_n(A)$. We have $\ell_i(B_1) \le \ell_i(A'/\fp)$, and so $\ell_1(B_1) \le n$ and $\ell_2(B_1)<n$. We can therefore take $B$ to be any simple quotient of $B_1$.
\end{proof}

\subsection{The \'etale case} \label{ss:proof-etale}

Throughout \S \ref{ss:proof-etale}, we work in the \'etale case, and do not assume that $\fT$ is semi-simple. Once again, our goal is to factor one of the algebras in case~(b) of Proposition~\ref{prop:step1}. However, this will not be as straightforward as in the semi-simple case: we must restrict to a small open subgroup to obtain the factorization. This yields the following proposition, which is what we are really after.

\begin{proposition} \label{prop:step2-etale}
Let $A$ be a simple \'etale algebra in $\uRep(\GG) \boxtimes \fT$ with $n=\ell(A)$ positive. Then there is an open subgroup $U \cong \GG^s$ of $\GG$ such that the restriction of $A$ to $\uRep(U) \boxtimes \fT$ has a simple quotient algebra $B$ such that $\ell_i(B)<n$ for all but at most one value of $i \in [s]$, and $t_n(B)<t_n(A)$.
\end{proposition}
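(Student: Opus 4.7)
The plan is to mimic the strategy of \S \ref{ss:step1} and \S \ref{ss:proof-ss}, but to replace the Frobenius-form argument (Lemma~\ref{lem:step2-ss}), whose proof used semi-simplicity of $\fT$, by an argument based on the étale structure of partial invariants coming from Proposition~\ref{prop:et-invar}. First I would apply \S \ref{ss:step1} at the open subgroup $\GG(0)$. In case~(a), any simple quotient $B$ of $A'/(\fp+\fq)$ satisfies the conclusion with $s=2$: both $\ell_1(B)$ and $\ell_2(B)$ are less than $n$, and $t_n(B)=0$. The interesting case is~(b), where $A'=A'/\fp \oplus A'/\fq$. Set $A^\circ := A'/\fp$; it is étale in $\uRep(\GG(0)) \boxtimes \fT$ (a quotient of the étale algebra $A'$, by \S \ref{ss:etale}(a)) and satisfies $\ell_1(A^\circ) = n$, $\ell_2(A^\circ) < n$, and $t_n(A^\circ) = t_n(A)$.

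To further decompose $A^\circ$ I would pass to a smaller open subgroup. For any open $U' \subset \GG(0)$ of the form $\GG^s$, consider
\[ R_{U'} := \uHom_{\uRep(U')}\bigl(\bone,\, \Res^{\GG(0)}_{U'}(A^\circ)\bigr). \]
Iterating Proposition~\ref{prop:et-invar} (peeling off the semi-simple factors of $\uRep(U') \cong \uRep(\GG)^{\boxtimes s}$ one at a time) shows that $R_{U'}$ is an étale algebra in $\fT$. Hence $\Gamma(R_{U'})$ is a finite étale $k$-algebra, and since $k$ is algebraically closed it is isomorphic to $k^{m(U')}$. The $m(U')$ primitive idempotents of $\Gamma(R_{U'})$, regarded as idempotents in the invariants of $\Res^{\GG(0)}_{U'}(A^\circ)$, give an algebra decomposition $\Res^{\GG(0)}_{U'}(A^\circ) = \bigoplus_{j=1}^{m(U')} C_j$. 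A simple quotient $B$ of any $C_j$ automatically satisfies $\ell_i(B) < n$ for every coordinate $i$ descending from position~$2$ of $\GG(0)$, and by the restriction rule of \S \ref{ss:res}, among the coordinates descending from position~$1$ at most one can carry a weight of length~$n$. Thus such a $B$ already satisfies the ``at most one index'' condition.

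The hard part is producing $U'$ and $C_j$ so that $t_n(B) < t_n(A)$. The first sub-task is to show that $m(U') \geq 2$ for $U'$ sufficiently deep; via the adjunction $\Gamma(R_{U'}) \cong \Hom\bigl(\cC(\GG(0)/U'),\, A^\circ\bigr)$ and the fact that $\cC(\GG(0)/U')$ eventually contains every simple of $\uRep(\GG(0))$ with positive multiplicity, $\Gamma(R_{U'})$ strictly grows once $U'$ is small enough to pick up some non-trivial simple appearing in $A^\circ$ (which exists because $\ell_{\mathrm{tot}}(A^\circ) \geq n > 0$). The second, more delicate sub-task, which I expect to be the main obstacle, is to arrange that either some $C_j$ has $T_n(C_j) = 0$ or the $T_n$-pieces are genuinely split across at least two factors, so that $t_n(B) < t_n(A)$ can be achieved by a careful choice of $C_j$. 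Here I would adapt the reasoning of Lemma~\ref{lem:step2-ss}(b)--(e), using the tensor rules $L_\wa \otimes L_\wa = L_{\wa\wa}^{\oplus 2} \oplus L_\wa$ and $L_\wa \otimes L_\wb = L_{\wa\wb} \oplus L_{\wb\wa} \oplus L_\wa \oplus L_\wb \oplus \bone$ of \S \ref{ss:simple} to restrict the multiplicative structure on $\Gamma(R_{U'})$ and rule out the pathology in which all top-length content is concentrated at a single primitive idempotent once $U'$ is taken sufficiently deep.
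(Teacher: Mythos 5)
Your setup is sound up to the point you flag as the ``main obstacle'': the case (a)/(b) dichotomy, passing to $A^\circ = A'/\fp$, and the construction of $R_{U'} = \uHom_{\uRep(U')}(\bone, \Res^{\GG(0)}_{U'}(A^\circ))$ as an \'etale algebra in $\fT$ (via iterated Proposition~\ref{prop:et-invar}) are all correct. But there are two genuine gaps, and the second is exactly where the paper's proof takes a different route.

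First, the claim that $\dim_k \Gamma(R_{U'}) \geq 2$ for $U'$ deep enough is not justified. By adjunction one has $\Gamma(R_{U'}) = \bigoplus_\mu \Hom_{\uRep(\GG(0))}(\cC(\GG(0)/U'), L_\mu) \otimes \Gamma_\fT(A^\circ_\mu)$, where $A^\circ = \bigoplus_\mu L_\mu \boxtimes A^\circ_\mu$. You correctly observe that deep $U'$ makes the first tensor factor non-zero for any $\mu$ appearing in $A^\circ$, but you also need $\Gamma_\fT(A^\circ_\mu) \neq 0$ for some non-trivial $\mu$ --- and that is precisely the kind of statement that fails when $\fT$ is not semi-simple (a non-zero object of $\fT$ can have $\Gamma = 0$). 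The paper sidesteps this by tracking the length of the \'etale algebra $A^U$ as an object of $\fT$, not the dimension of its invariants.

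Second, even granting a non-trivial \'etale decomposition $\Res^{\GG(0)}_{U'}(A^\circ) = \bigoplus_j C_j$, concluding $t_n(C_j) < t_n(A)$ for some $j$ is not automatic, for the reason you suspect: restriction duplicates length-$n$ content. Already $\Res^{\GG}_{\GG(0)}(L_\lambda)$ contains both $L_\lambda \boxtimes \bone$ and $\bone \boxtimes L_\lambda$, and iterating this to a deep $U'$ multiplies $T_n$ by the number of coordinates descending from position~1, so $t_n(\Res^{\GG(0)}_{U'}(A^\circ))$ can be far larger than $t_n(A)$ and the factors $C_j$ can individually have $t_n \geq t_n(A)$. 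Your proposed fix --- adapt Lemma~\ref{lem:step2-ss}(b)--(e) --- does not transfer, because that argument is grounded in the Frobenius structure $\lambda \colon R \to k$ obtained from the splitting $\bone \to A$, which uses the semi-simplicity of $\fT$ in an essential way. The paper's actual mechanism (Lemma~\ref{lem:etale-quotient}) is structurally different: at each step it restricts only one level down to $\GG(0)$ and immediately quotients by $\fp$, which kills the duplicated copy and keeps $T_n(B) = T_n(A)$ fixed, while forcing the partial invariant algebra $A^U \subset B^V$ in $\fT$ to grow strictly in length. Termination then follows because $A^U$ is a simple \'etale algebra acting on $T_n(A)$, hence embeds in $\ul{\End}(T_n(A))$, which caps its length. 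That bookkeeping --- quotient at every step, track length of $A^U$ rather than $\Gamma$, and terminate via $\ul{\End}(T_n(A))$ --- is the missing idea.
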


Before proving this, we require a lemma. In what follows, for an object $M$ of $\uRep(U) \boxtimes \fT$, we write $M^U$ for the object $M_{\emptyset,\ldots,\emptyset}$ in the notation of \eqref{eq:decomp}.

\begin{lemma} \label{lem:etale-quotient}
Let $U \cong \GG^s$ be an open subgroup of $G$, and let $A$ be a simple \'etale algebra in $\uRep(U) \boxtimes \fT$ with $n=\ell_{\rm tot}(A)$ positive and $\ell_i(A)<n$ for all but one $i \in [s]$. Then there is an open subgroup $V\cong \GG^{s+1}$ of $U$ and a simple quotient $B$ of $\Res^U_V(A)$ such that one of the following two cases hold:
\begin{enumerate}
\item We have $\ell_i(B)<n$ for all but at most one $i \in [s+1]$, and $t_n(B)<t_n(A)$.
\item We have $\ell_i(B)<n$ for all but one $i \in [s+1]$, we have $T_n(B)=T_n(A)$, and the natural map $A^U \to B^V$ is a strict inclusion.
\end{enumerate}
\end{lemma}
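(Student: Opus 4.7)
The plan is to mimic the ideal construction of \S \ref{ss:step1} in the relative setting, obtained by splitting the distinguished $\GG$-factor of $U$. After reindexing, assume $\ell_1(A) = n$ and $\ell_j(A) < n$ for $j > 1$. Take $V = \GG(0) \times \GG^{s-1}$ inside $U = \GG \times \GG^{s-1}$, and identify $V \cong \GG^{s+1}$ so that the new first and second coordinates arise from splitting the old first coordinate. Let $A' = \Res^U_V(A)$; since restriction is a symmetric monoidal functor, $A'$ remains \'etale in $\uRep(V) \boxtimes \fT$. The restriction rule of \S \ref{ss:res} gives $\ell_j(A') = \ell_{j-1}(A) < n$ for $j \geq 3$, and $\ell_1(A') + \ell_2(A') \leq n$, since a length-$n$ simple splits into two new pieces whose lengths sum to at most $n$.

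Define ideals $\fp, \fq \subset A'$ by the same recipe as \S \ref{ss:step1}, applied to the first two new coordinates. The tensor-product calculation in the proof of Proposition~\ref{prop:step1} goes through verbatim to give $\fp\fq = 0$, leaving two cases: either (i) $\fp + \fq \subsetneq A'$, or (ii) $A' \xrightarrow{\sim} A'/\fp \oplus A'/\fq$ as algebras. In case~(i), any simple \'etale factor $B$ of $A'/(\fp + \fq)$ satisfies $\ell_j(B) < n$ for all $j \in [s+1]$, so $t_n(B) = 0 < t_n(A)$, yielding conclusion~(a).

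In case~(ii), I would decompose $A'/\fp$ and $A'/\fq$ into simple \'etale factors. From the restriction rule, $T_n(A'/\fp) = T_n(A'/\fq) = T_n(A)$, the length-$n$ content of $A$ being pushed into the new first coordinate of $A'/\fp$ and the new second coordinate of $A'/\fq$ with the same multiplicities. If either quotient has more than one simple \'etale factor, at least one such factor must have $T_n$ strictly smaller than $T_n(A)$; taking $B$ to be such a factor gives conclusion~(a). So we may assume both $A'/\fp$ and $A'/\fq$ are already simple \'etale. Set $B = A'/\fp$: it is a simple \'etale quotient with $\ell_j(B) < n$ for $j \neq 1$ and $T_n(B) = T_n(A)$, satisfying the first two requirements of conclusion~(b).

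The main obstacle is verifying the strict containment $A^U \subsetneq B^V$. By Proposition~\ref{prop:et-invar}, $A^U$ and $B^V$ are \'etale algebras in $\fT$, both absolutely simple since $\Gamma(A) = \Gamma(B) = k$ by Proposition~\ref{prop:Gamma-field}. The canonical restriction-and-projection map $A^U \to B^V$ is an algebra map sending $1_A$ to $1_B \neq 0$; its kernel, being an ideal of the simple algebra $A^U$ distinct from the unit ideal, vanishes, so the map is injective. For strictness, I would combine the decomposition ${A'}^V \cong A^U \oplus A_{\wa, \emptyset, \ldots} \oplus A_{\wb, \emptyset, \ldots}$ coming from the restriction rule for the trivial representation (Proposition~\ref{prop:G0-invar}) with the \'etale splitting ${A'}^V = B^V \oplus C^V$ (where $C = A'/\fq$), and the self-duality $A_{\wa, \emptyset, \ldots} \cong (A_{\wb, \emptyset, \ldots})^{\vee}$ that follows from $A$ being \'etale. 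Choosing $B$ judiciously between $A'/\fp$ and $A'/\fq$, one of the two quotients should have $B^V$ strictly larger than $A^U$; otherwise ${A'}^V$ would split as two copies of $A^U$, which combined with the displayed decomposition and the self-duality would force a numerical or structural incompatibility. This last step is the most delicate part of the argument.
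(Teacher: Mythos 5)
Your argument follows the paper's proof very closely in setup and structure: same choice of $V \cong \GG(0) \times \GG^{s-1}$, same ideals $\fp$, $\fq$, same case split via Proposition~\ref{prop:step1}, same reduction to the situation where $A'/\fp$ and $A'/\fq$ are both simple. The one genuine gap is in the verification that the containment $A^U \subset B^V$ is strict, which you leave as a heuristic (``should have $B^V$ strictly larger... would force a numerical or structural incompatibility'') rather than carrying out. You have the right ingredients, namely the decomposition ${(A')}^V = A^U \oplus M \oplus N$ from Proposition~\ref{prop:G0-invar} (where $M = A_{\wa,\emptyset^{s-1}}$, $N = A_{\wb,\emptyset^{s-1}}$), the splitting ${(A')}^V = (A'/\fp)^V \oplus (A'/\fq)^V$, and the duality $M \cong N^\vee$ coming from self-duality of $A$. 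What is missing is the observation that makes the contradiction bite: one should compare \emph{lengths as $A^U$-modules}, using the fact that $A^U$ is a simple algebra (so it has length one as a module over itself, by $\Gamma(A^U)=k$). Then: (i) both $(A'/\fp)^V$ and $(A'/\fq)^V$ are nonzero $A^U$-modules, so the right side has $A^U$-length $\geq 2$; (ii) hence $M \oplus N \neq 0$; (iii) duality forces both $M$ and $N$ nonzero, so the right side has $A^U$-length $\geq 3$; (iv) therefore one of $(A'/\fp)^V$, $(A'/\fq)^V$ has $A^U$-length $\geq 2$, which is exactly the strictness. Without passing to $A^U$-module lengths (rather than, say, dimension in $\fT$ or naive counting), the ``numerical incompatibility'' you gesture at does not materialize — $M$ and $N$ could a priori both be zero, or $A^U$ could have length $>1$ in some other sense.

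One further small correction: the claim ``$\ell_1(A') + \ell_2(A') \leq n$'' is false as written, since $\ell_1$ and $\ell_2$ are maxima taken independently over the simple constituents, and the restriction of a length-$n$ simple $L_\lambda$ contains both $L_\lambda \boxtimes \bone$ and $\bone \boxtimes L_\lambda$, so $\ell_1(A') = \ell_2(A') = n$. What is true (and what you presumably intended) is that $\ell_{\rm tot}(A') = n$, i.e.\ for each individual simple appearing in $A'$ the lengths of the two new factors sum to at most $n$. This does not affect the rest of the argument since you do not use the displayed inequality.
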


\begin{proof}
Without loss of generality, suppose $\ell_1(A)=n$ and $\ell_i(A)<n$ for $2 \le i \le s$. We take $V$ to be the open subgroup of $U$ that corresponds to $\GG(0) \times \GG^{s-1}$ under the identification $U \cong \GG^s$. Identify $\uRep(U) \boxtimes \fT$ with $\uRep(\GG) \boxtimes \fT'$, where $\fT'=\uRep(\GG^{s-1}) \boxtimes \fT$, and similarly identify $\uRep(V) \boxtimes \fT$ with $\uRep(\GG(0)) \boxtimes \fT'$.

Let $A'=\Res^U_V(A)$, and let $\fp$ and $\fq$ be the ideals of $A'$ defined in \S \ref{ss:step1}. If we are in case~(a) of Proposition~\ref{prop:step1}, then we can take $B$ to be any simple quotient of $A'/(\fp+\fq)$ and we are in case (a) of the present proposition, with $t_n(B)=0$. Thus suppose we are in case~(b) of Proposition~\ref{prop:step1}, so that $A'=A'/\fp \oplus A'/\fq$. If either $A'/\fp$ or $A'/\fq$ factors as a non-trivial direct product then we can take $B$ to be an appropriate simple quotient of one of the factors (exactly as in the proof of Proposition~\ref{prop:step2-ss}), and we are again in case (a) of the present proposition. Thus suppose that $A'/\fp$ and $A'/\fq$ do not factor, which means they are simple (since they are \'etale).

Decompose $A$ as in \eqref{eq:decomp}. Note that $A_{\emptyset^s}=A^U$ is an \'etale algebra in $\fT$ by Proposition~\ref{prop:et-invar}; here $\emptyset^s$ is the length $s$ tuple $(\emptyset, \ldots, \emptyset)$. It satisfies $\Gamma(A^U)=\Gamma(A)=k$, and is thus a simple algebra. Put $M=A_{\bb,\emptyset^{s-1}}$ and $N=A_{\ww,\emptyset^{s-1}}$. These are $A^U$-modules and dual to each other (as objects of $\fT$) since $A$ is self-dual (being \'etale). We have
\begin{displaymath}
(A'/\fp)^V \oplus (A'/\fq)^V = (A')^V = M \oplus A^U \oplus N
\end{displaymath}
by Proposition~\ref{prop:G0-invar}. This isomorphism holds as $A^U$-modules. Both terms on the left are non-zero, since $A'/\fp$ and $A'/\fq$ are non-zero algebras. Thus the left side has length at least two as an $A^U$-module. Since $A^U$ is simple as a module over itself, it follows that at least one of $M$ or $N$ is non-zero, and so both are (since they are dual to each other). Thus the right side above has length at least three as an $A^U$-module. It follows that one of the terms on the left has length at least two as an $A^U$-module. We take $B$ to be this factor of $A'$. This belongs to case (b) of the present proposition. Indeed, we have already seen that the first two conditions hold (see the discussion following Proposition~\ref{prop:step1}), and the containment $A^U \subset B^V$ is strict by construction. (Note that the map $A^U \to B^V$ is indeed injective, since $A^U$ is a simple algebra.)
\end{proof}

\begin{proof}[Proof of Proposition~\ref{prop:step2-etale}]
Put $B_1=A$ and $U_1=\GG$. Apply Lemma~\ref{lem:etale-quotient} to $B_1$, and let $B_2$ be the resulting simple quotient of the restriction of $B_1$ in $\uRep(U_2) \boxtimes \fT$, where $U_2 \cong \GG^2$. In case~(a), we are done, so suppose we are in case~(b). Apply the lemma again. Let $B_3$ be the resulting simple quotient of the restriction of $B_2$ in $\uRep(U_3) \boxtimes \fT$, where $U_3 \cong \GG^3$. In case~(a), we are done, so suppose we are in case~(b).

Continue in this manner. If we ever reach case~(a) then we are done. Thus suppose this never happens; we will obtain a contradiction. We thus obtain a chain of open subgroups $U_1 \supset U_2 \supset \cdots$ of $\GG$, and simple \'etale algebras $B_i$ in $\uRep(U_i) \boxtimes \fT$ such that:
\begin{itemize}
\item $B_{i+1}$ is a quotient of the restriction of $B_i$,
\item the inclusion $B_i^{U_i} \subset B_{i+1}^{U_{i+1}}$ is strict, and
\item $T_n(B_i)=T_n(A)$ for all $i$.
\end{itemize}
Each $B_i^{U_i}$ is a simple \'etale algebra in $\fT$, using the argument from the proof of Lemma~\ref{lem:etale-quotient}. Moreover, $T_n(B_i)=T_n(A)$ is a module over it. The map $B_i^{U_i} \to \ul{\End}(T_n(A))$ is injective since $B_i^{U_i}$ is simple. However, this gives an upper bound on the length of $B_i^{U_i}$, and so we have a contradiction.
\end{proof}

\subsection{Completion of proof}

We now complete the proof of Theorem~\ref{thm:key}. Suppose $A$ is a simple algebra in $\uRep(\GG) \boxtimes \fT$ with $\ell(A)=n$. We have just seen that (under the hypotheses of Theorem~\ref{thm:key}), the restriction of $A$ to some open subgroup $U \cong \GG^s$ admits a simple quotient in which only one $\GG$ has any length $n$ simples, and the number of such simples is smaller than the corresponding number in $A$ (as measured by $t_n)$. By iterating this construction, we can find a quotient that has no length $n$ simples. This is the content of the following lemma.

\begin{lemma} \label{lem:key-1}
Let $A$ be a simple algebra in $\uRep(\GG) \boxtimes \fT$ with $n=\ell(A)$ positive. Assume $A$ is \'etale or $\fT$ is semi-simple. Then there is an open subgroup $U \cong \GG^s$ of $\GG$ such that the restriction of $A$ to $\uRep(U) \boxtimes \fT$ admits a simple quotient $B$ with $\ell_i(B)<n$ for all $1 \le i \le s$.
\end{lemma}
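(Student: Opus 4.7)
The plan is to prove the lemma by (strong) induction on $t_n(A)$, using Proposition~\ref{prop:step2-ss} or Proposition~\ref{prop:step2-etale} as the decrement step. Since $\ell(A) = n > 0$, some length-$n$ Delannoy simple appears in $A$, so $t_n(A) \ge 1$. Each iteration will strictly decrease $t_n$, and once $t_n$ drops to $0$ the conclusion of the lemma is immediate (every $\ell_i$ is then automatically less than $n$).

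For the inductive step, apply Proposition~\ref{prop:step2-ss} (in the semi-simple case) or Proposition~\ref{prop:step2-etale} (in the \'etale case) to obtain an open subgroup $U_1 \cong \GG^{s_1}$ and a simple quotient $B_1$ of $\Res^{\GG}_{U_1}(A)$ in $\uRep(U_1) \boxtimes \fT$ satisfying $\ell_i(B_1) < n$ for all but at most one $i \in [s_1]$ and $t_n(B_1) < t_n(A)$. If $t_n(B_1) = 0$, every $\ell_i(B_1) < n$ and we are done by setting $U := U_1$, $B := B_1$. Otherwise there is a unique index $i_0 \in [s_1]$ with $\ell_{i_0}(B_1) = n$. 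Re-identify $\uRep(U_1) \boxtimes \fT \cong \uRep(\GG) \boxtimes \fT'$, where $\fT' := \uRep(\GG)^{\boxtimes (s_1 - 1)} \boxtimes \fT$ and the distinguished $\GG$ factor corresponds to the $i_0$-th coordinate. Under this re-identification, $B_1$ becomes a simple algebra in $\uRep(\GG) \boxtimes \fT'$ with $\ell(B_1) = n$, and the hypotheses of the lemma are preserved: $\fT'$ is semi-simple whenever $\fT$ is (Deligne tensor products of semi-simple pre-Tannakian categories are semi-simple), and $B_1$ is \'etale whenever $A$ is (quotients of \'etale algebras are \'etale). Since $t_n(B_1) < t_n(A)$, the inductive hypothesis applied to $B_1$ in $\uRep(\GG) \boxtimes \fT'$ yields an open $V \cong \GG^r$ of $\GG$ and a simple quotient $B$ of $\Res^{\GG}_V(B_1)$ in $\uRep(V) \boxtimes \fT' \cong \uRep(V \times \GG^{s_1-1}) \boxtimes \fT$ with $\ell_i(B) < n$ for every $i \in [r]$.

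Setting $U := V \times \GG^{s_1 - 1} \cong \GG^{r + s_1 - 1}$, the algebra $B$ is a simple quotient of $\Res^{\GG}_U(A)$. In the first $r$ coordinates of $U$ the bound $\ell_i(B) < n$ is given by the inductive hypothesis; in the remaining $s_1 - 1$ coordinates, which correspond to the non-$i_0$ factors of $U_1$, we have $\ell_j(B) \le \ell_j(B_1) < n$ because restricting only in the $i_0$-th factor and passing to a further quotient cannot increase the multiplicities of Delannoy simples in the other coordinates. The main obstacle is not a hard computation but this bookkeeping: we must check that the lemma's hypothesis (semi-simple $\fT$ or \'etale $A$) is preserved when we reinterpret $\uRep(U_1) \boxtimes \fT$ as $\uRep(\GG) \boxtimes \fT'$, and that the lengths in the non-distinguished Delannoy factors are controlled across restriction and quotient. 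Both points follow from routine properties of Deligne tensor products, restriction functors, and quotients of \'etale algebras.
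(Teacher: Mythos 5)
Your proposal is correct and follows essentially the same route as the paper: induction on $t_n(A)$, with Propositions~\ref{prop:step2-ss} and~\ref{prop:step2-etale} supplying the decrement, followed by re-identifying $\uRep(U_1)\boxtimes\fT$ as $\uRep(\GG)\boxtimes\fT'$ (absorbing the extra Delannoy factors into the auxiliary category) in order to invoke the inductive hypothesis. The bookkeeping points you flag — that $\fT'$ remains semi-simple, that quotients of \'etale algebras are \'etale, that $t_n$ is unchanged under re-identification, and that restriction in the distinguished factor and passage to a quotient do not raise $\ell_j$ in the other factors — are exactly the routine checks the paper also relies on.
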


\begin{proof}
We proceed by induction on $t_n(A)$. By Proposition~\ref{prop:step2-ss} (if $\fT$ is semi-simple) or Proposition~\ref{prop:step2-etale} (if $A$ is \'etale), there is an open subgroup $V \cong \GG^p$ of $\GG$ such that $\Res^{\GG}_V(A)$ admits a simple quotient $C$ satisfying $\ell_i(C)<n$ for all but at most one $i \in [p]$, and $t_n(C)<t_n(A)$. Note that if $\fT$ is semi-simple then we actually get $p=2$ here. For notational simplicity, suppose $\ell_i(C)<n$ for all $2 \le i \le p$. If $\ell_1(C)<n$ then we can simply take $U=V$ and $B=C$. We thus assume $\ell_1(C)=n$ in what follows.

Identify $\uRep(V) \boxtimes \fT$ with $\uRep(\GG) \boxtimes \fT'$, where $\fT'=\uRep(\GG)^{\boxtimes (p-1)} \boxtimes \fT$. Let $C'$ be $C$, regarded in $\uRep(\GG) \boxtimes \fT'$. We have $\ell(C)=n$. Also,  $T_n(C')=\bone^{\boxtimes (p-1)} \boxtimes T_n(C)$, and so $t_n(C')=t_n(C)$. Thus, by induction, there is an open subgroup $U_0 \cong \GG^q$ of $\GG$ such that $\Res^{\GG}_{U_0}(C')$ admits a simple quotient $B'$ with $\ell_i(B')<n$ for all $1 \le i \le q$. Now, under the isomorphism $V \cong \GG^p$, there is an open subgroup $U$ of $V$ that corresponds to $U_0 \times \GG^{p-1}$; note that $U \cong \GG^{p+q-1}=\GG^s$. We have an identification $\uRep(U) \boxtimes \fT \cong \uRep(U_0) \boxtimes \fT'$. Let $B$ be the algebra of $\uRep(U) \boxtimes \fT$ corresponding to $B'$. Then $B$ is a quotient of $\Res^{\GG}_U(A)$, and $\ell_i(B)<n$ for all $1 \le i \le s$, as required.
\end{proof}

By iterating the above lemma, we eventually obtain a simple quotient of a restriction of $A$ in which no Delannoy simples except the trivial representation appear, as we now explain:

\begin{proof}[Proof of Theorem~\ref{thm:key}]
We proceed by induction on $\ell(A)$. If $\ell(A)=0$, there is nothing to prove. Suppose now that $n=\ell(A)$ is positive. By Lemma~\ref{lem:key-1}, there is an open subgroup $V \cong \GG^s$ of $\GG$ such that $\Res^{\GG}_V(A)$ admits a simple quotient $B$ with $\ell_i(B)<n$ for all $1 \le i \le s$. Concretely, this means for each copy of $\GG$, only simples of length $<n$ appear in $B$. Note that if $A$ is \'etale then so is $B$, since a quotient of an \'etale algebra is \'etale.

Put $\fT_i=\uRep(\GG)^{\boxtimes i} \boxtimes \fT$. Regard $B=B_s$ as an object of $\uRep(\GG) \boxtimes \fT_{s-1}$. We have $\ell(B)<n$. By induction, there is an open subgroup $U_1$ of $\GG$ and a quotient of $\Res^{\GG}_{U_1}(B_s)$ of the form $\bone \boxtimes B_{s-1}$, where $B_{s-1}$ is a simple algebra of $\fT_{s-1}$. Again, if $B_s$ is \'etale then so is $B_{s-1}$.

Regard $B_{s-1}$ as an object of $\uRep(\GG) \boxtimes \fT_{s-2}$. Since $\bone \boxtimes B_{s-1}$ is a quotient of $\Res^{\GG}_{U_1}(B)$, any Delannoy simple appearing in $B_{s-1}$ (for any copy of $\GG$) has length $<n$. Thus $\ell(B_{s-1})<n$. As above, there is an open subgroup $U_2$ of $\GG$ such that $\Res^{\GG}_{U_2}(B_{s-1})$ has a quotient of the form $\bone \boxtimes B_{s-2}$, where $B_{s-2}$ is a simple algebra of $\fT_{s-2}$.

Continuing in this manner, we obtain open subgroups $U_1, \ldots, U_s$ of $\GG$ such that the restriction of $B$ from $\GG^s$ to $U_1 \times \cdots \times U_s$ admits a quotient of the form $\bone^{\boxtimes s} \boxtimes E$, where $E$ is a simple algebra of $\fT$. Under the identification $V \cong \GG^s$, there is an open subgroup $U$ of $V$ that corresponds to $U_1 \times \cdots \times U_s$. We thus see that $\Res^{\GG}_U(B)$, regarded as an object of $\uRep(U) \boxtimes \fT$, has $\bone \boxtimes E$ as a quotient, as desired.
\end{proof}

\section{Theorem~\ref*{mainthm2}} \label{s:proof2}

\subsection{Overview}

Let $\fT$ be a pre-Tannakian category and let $A$ be a simple \'etale algebra in $\uRep(\GG) \boxtimes \fT$. We must show that $A$ is isomorphic to $\cC(\bR^{(n)}) \boxtimes E$ for some $n$ and some simple \'etale algebra $E$ of $\fT$; this is what Theorem~\ref{mainthm2} asserts. So far, we have seen that $A$ is a subalgebra of such an algebra (Corollary~\ref{cor:key}). Now, the theory developed in \cite{pregalois, discrete} shows that the category of \'etale algebras in a pre-Tannakian category is equivalent to a category of the form $\bS(G)$ for some pro-oligomorphic group $G$. This allows us to translate our problem into one about $G$-sets, which is more elementary; the proof of Proposition~\ref{prop:schwartz-sub} is another example of this method. Ultimately, we deduce what we want from a special property of the group $\GG$, namely, that it is split (\S \ref{ss:split}).

In this section, we first discuss split oligomorphic groups. We then recall the material from \cite{pregalois} and \cite{discrete}. Finally, we apply all of this in the case of interest to prove Theorem~\ref{mainthm2}.

\subsection{Split oligomorphic groups} \label{ss:split}

Theorem~\ref{mainthm2} states that any simple \'etale algebra in $\uRep(\GG) \boxtimes \fT$ factors into a tensor product. The following definition gives a combinatorial analog of this property.

\begin{definition}
A pro-oligomorphic group $G$ is \defn{split} if for any pro-oligomorphic group $H$, any transitive $G \times H$ set is isomorphic to one of the form $X \times Y$, where $X$ is a transitive $G$-set and $Y$ is a transitive $H$-set.
\end{definition}

The following proposition gives several equivalent conditions for being split.

\begin{proposition} \label{prop:split}
Let $G$ be a pro-oligomorphic group. The following are equivalent:
\begin{enumerate}
\item $G$ is split.
\item For any pro-oligomorphic group $H$, any open subgroup of $G \times H$ is of the form $U \times V$, where $U$ is an open subgroup of $G$ and $V$ is an open subgroup of $H$.
\item Every open subgroup of $G$ is its own normalizer.
\item The automorphism group of any transitive $G$-set is trivial.
\item Whenever $V \subset U$ is a finite index containment of open subgroups, we have $U=V$.
\end{enumerate}
Moreover, if $G$ is split then any open subgroup of $G$ is split.
\end{proposition}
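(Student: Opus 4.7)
The plan is to establish the cyclic equivalence (a) $\Leftrightarrow$ (b) $\Leftrightarrow$ (c) $\Leftrightarrow$ (d) $\Leftrightarrow$ (e) and then deduce the ``moreover'' clause from (c). I would front-load the dictionary-style pieces. The equivalence (a) $\Leftrightarrow$ (b) comes from identifying a transitive $G \times H$-set with a conjugacy class of open subgroups of $G \times H$, together with the observation that any conjugate of a product subgroup is still a product (of the conjugates); so ``is of this form'' and ``is of this form up to conjugacy'' coincide here. The equivalence (c) $\Leftrightarrow$ (d) is the classical identification $\Aut_G(G/U) = N_G(U)/U$.

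For (b) $\Leftrightarrow$ (c) I would use Goursat's lemma. Given an open $W \subset G \times H$, Goursat supplies $W_G := W \cap (G \times \{1\}) \trianglelefteq \pi_G(W)$, both open in $G$; hypothesis (c) then forces $\pi_G(W) \subset N_G(W_G) = W_G$, and similarly on the $H$-side, whence $W = W_G \times W_H$. For the converse, if some open $U \subset G$ satisfies $N_G(U) \supsetneq U$, then $W := \{(x,y) \in N_G(U) \times N_G(U) : xU = yU\}$ is an open subgroup of $G \times G$ that strictly contains $U \times U$ but whose intersection with $G \times \{1\}$ is $U \times \{1\}$, so $W$ is not a product of subgroups, violating (b).

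The heart of the proof is (c) $\Leftrightarrow$ (e). The direction (e) $\Rightarrow$ (c) is quick: for any open $V$, the map $N_G(V)/V \to V \backslash G/V$ sending $gV \mapsto VgV$ is injective, so Roelcke pre-compactness forces $[N_G(V):V] < \infty$, and (e) then gives $N_G(V) = V$. The harder direction (c) $\Rightarrow$ (e) is the main obstacle, and I would attack it by contradiction. Suppose $V \subsetneq U$ with $[U:V] < \infty$, and form the core $V_0 = \bigcap_{u \in U} uVu^{-1}$. Since $V$ has finitely many conjugates in $U$, $V_0$ is a finite intersection of open subgroups, hence open and normal of finite index in $U$. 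For every open $H$ with $V_0 \subset H \subset U$, hypothesis (c) gives $N_U(H) \subset N_G(H) = H$, so every subgroup of the finite group $\bar U := U/V_0$ is self-normalizing. A short self-contained finish then does it: $Z(\bar U)$ is normal, hence self-normalizing, forcing $\bar U = Z(\bar U)$ to be abelian; in an abelian group every subgroup is normal, so being self-normalizing makes every subgroup equal to $\bar U$, and taking the trivial subgroup forces $\bar U = 1$. Hence $U = V_0 \subset V$, contradicting $V \subsetneq U$.

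Finally the ``moreover'' is immediate from (c): if $U \subset G$ is open and $V \subset U$ is an open subgroup, then $V$ is open in $G$, and $N_U(V) \subset N_G(V) = V$, so $U$ satisfies (c) and is therefore split. The only genuinely non-formal step in the whole argument is the finite-group lemma inside (c) $\Rightarrow$ (e); everything else is structural bookkeeping powered by Corollary~\ref{cor:normalizer} and Roelcke pre-compactness.
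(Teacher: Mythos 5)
Your proof is correct, and the overall cycle matches the paper's, but two steps take genuinely different routes. For (c) $\Rightarrow$ (b), you invoke Goursat's lemma: the normality $W_G \trianglelefteq \pi_G(W)$ is automatic, so $\pi_G(W) \subset N_G(W_G) = W_G$ follows directly from (c). The paper instead computes $gUg^{-1} \subset U$ for $(g,h) \in W$ and then needs Corollary~\ref{cor:normalizer} (Roelcke pre-compactness) to upgrade that one-sided containment to $g \in N_G(U)$; your Goursat framing sidesteps that appeal entirely, which is a small gain in self-containment. For (c) $\Rightarrow$ (e), both you and the paper form the normal core $V_0 = \bigcap_{u \in U} uVu^{-1}$, but the paper then applies (c) just once: $V_0 \trianglelefteq U$ gives $U \subset N_G(V_0) = V_0$, hence $U = V_0 \subset V$, done. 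You instead apply (c) to every intermediate open $H$ with $V_0 \subset H \subset U$, pass to the finite quotient $\bar U = U/V_0$, and run a finite-group lemma (all subgroups self-normalizing $\Rightarrow$ center is everything $\Rightarrow$ abelian $\Rightarrow$ trivial). That lemma is correct, but it is unnecessary overhead: the single application of (c) to $V_0$ that you already have in hand (since $\{1\} = V_0/V_0$ is a subgroup of $\bar U$, so $N_U(V_0) = V_0$, i.e.\ $U = V_0$) closes the argument immediately. The rest — (a) $\Leftrightarrow$ (b) via conjugates of products, (c) $\Leftrightarrow$ (d) via $\Aut(G/U) \cong N_G(U)/U$, (e) $\Rightarrow$ (c) via the injection $N_G(V)/V \hookrightarrow V\backslash G/V$, and the ``moreover'' from (c) — is exactly the paper's reasoning.
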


\begin{proof}
(b) $\Rightarrow$ (a) is clear.

(a) $\Rightarrow$ (b) If $W$ is an open subgroup of $G \times H$, then $(G \times H)/W$ is isomorphic to $(G/U') \times (H/V')$ for some $U' \subset G$ and $V' \subset H$, and so $W$ is conjugate in $G \times H$ to $U' \times V'$, and thus of the form $U \times V$.

(c) $\Rightarrow$ (b) Let $H$ be a pro-oligomorphic group and let $W$ be an open subgroup of $G \times H$. Let $U$ be the inverse image of $W$ under the inclusion $G \to G \times H$, and similarly define $V \subset H$. We claim that the containment $U \times V \subset W$ is an equality. Let $(g,h) \in W$ be given. For any $u \in U$, the element
\begin{displaymath}
(g,h)(u,1)(g,h)^{-1} = (gug^{-1}, 1)
\end{displaymath}
belongs to $W$, and so $gug^{-1} \in U$. We thus find $gUg^{-1} \subset U$. Applying the same reasoning to $(g^{-1}, h^{-1}) \in W$, we find\footnote{We thank the referee for pointing out this simplification.} $g^{-1}Ug \subset U$. Thus $gUg^{-1}=U$, and so $g$ belongs to the normalizer of $U$, which is $U$ by our assumption. Since $(g,1) \in W$, it follows that $(1,h) \in W$, and so $h \in V$. Thus the claim holds, and so (b) holds.

(b) $\Rightarrow$ (c) Let $U$ be an open subgroup of $G$. Consider the group homomorphism
\begin{displaymath}
\rN_G(U) \times \rN_G(U) \to \rN_G(U)/U  \times \rN_G(U)/U.
\end{displaymath}
Let $W$ be the inverse image of the diagonal subgroup of the target. This contains $U \times U$, and is thus open. By assumption, we have $W=W_1 \times W_2$ for open subgroups $W_i \subset G$. Clearly, we have
\begin{displaymath}
W_1 \times 1 = W \cap (G \times 1)=U \times 1, \qquad
1 \times W_2 = W \cap (1 \times G)=1 \times U.
\end{displaymath}
Thus $W=U \times U$, and so $\rN_G(U)=U$, as required.

(c) $\Leftrightarrow$ (d) If $X=G/U$ is a transitive $G$-set then the automorphism group of $X$ is $\rN_G(U)/U$, and so $\Aut(X)$ is trivial if and only if $U=\rN_G(U)$.

(e) $\Rightarrow$ (c) If $U$ is an open subgroup then $U$ has finite index in $\rN_G(U)$, as $U\backslash \rN_G(U)/U = \rN_G(U)/U$ is finite. Thus $U=\rN_G(U)$.

(c) $\Rightarrow$ (e) Let $V \subset U$ be a finite index containment. Let $W$ be the intersection of the groups $gVg^{-1}$ with $g \in U$. This is a finite intersection, and so $W$ is an open subgroup of finite index in $U$. Since $W$ is normal in $U$, we have $U \subset \rN_G(W)$, and so $U \subset W$ by our assumption. Thus $V=U$, as required.

Finally, if $G$ satisfies (e) then clearly any open subgroup of $G$ does as well.
\end{proof}

\begin{remark}
Let $\cF$ be a Fra\"iss\'e class with limit $\Omega$, and let $G=\Aut(\Omega)$, which acts oligomorphically on $\Omega$; see \cite[\S 6.2]{repst} for background. If $X$ is a member of $\cF$ then the set $\Omega^{[X]}$ of embeddings $X \to \Omega$ is a transitive $G$-set, and its automorphism group contains $\Aut(X)$ (with equality if $X$ is definably closed). Thus a necessary condition for $G$ to be split is that all members of $\cF$ have trivial automorphism group. We do not know if this is a sufficient condition.
\end{remark}

\subsection{A property of split groups}

Let $G$ and $K$ be pro-oligomorphic groups, and let $\Phi \colon \bS(G) \to \bS(K)$ be a functor. We make the following definitions:
\begin{itemize}
\item $\Phi$ is \defn{left exact} if it commutes with finite limits.
\item $\Phi$ is \defn{right exact} if it commutes with finite co-limits.
\item $\Phi$ is \defn{exact} if it is both left and right exact.
\item $\Phi$ is \defn{additive} if it commutes with finite co-products.
\item $\Phi$ is \defn{atomic} if it maps transitive sets to transitive sets.
\end{itemize}
If $\Phi$ is left exact and additive then the following conditions are equivalent: (i) $\Phi$ is exact; (ii) $\Phi$ preserves quotients by equivalence relations; and (iii) $\Phi$ preserves surjections. If $\Phi$ is additive and atomic then it preserves surjections. If $\Phi$ is an exact atomic functor then it is fully faithful (look at graphs of morphisms), and so $\bS(G)$ can be identified with a full subcategory of $\bS(K)$ that is closed under finite (co-)limits.

We require the following property of split groups.

\begin{proposition} \label{prop:split-quot}
Let $G$, $H$, and $K$ be pro-oligomorphic groups, with $G$ split, and let
\begin{displaymath}
\Phi \colon \bS(G) \to \bS(K), \qquad \Psi \colon \bS(H) \to \bS(K)
\end{displaymath}
be exact functors. Suppose the following condition holds:
\begin{enumerate}
\item[($\ast$)] $\Phi(X) \times \Psi(Y)$ is transitive, whenever $X \in \bS(G)$ and $Y \in \bS(H)$ are transitive.
\end{enumerate}
Let $X$ be a transitive $G$-set, let $Y$ be a transitive $H$-set, and let $Z$ be a quotient of the $K$-set $\Phi(X) \times \Psi(Y)$. Then there exist quotients $X'$ of $X$ and $Y'$ of $Y$ such that $Z \cong \Phi(X') \times \Psi(Y')$.
\end{proposition}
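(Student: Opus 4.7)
The plan is to package $\Phi$ and $\Psi$ into a single functor $\Theta \colon \bS(G \times H) \to \bS(K)$, use its full faithfulness to transfer the quotient description to $\bS(G \times H)$, and then invoke the split property. Since $G$ is split, every transitive object of $\bS(G \times H)$ factors canonically as $X \times Y$ with $X$ transitive in $\bS(G)$ and $Y$ transitive in $\bS(H)$, the factors being recovered as the images under the projection functors $\bS(G \times H) \to \bS(G)$ and $\bS(G \times H) \to \bS(H)$. Moreover, any $G \times H$-map $X \times Y \to X_1 \times Y_1$ is a pair $(f,g)$ with $f \colon X \to X_1$ a $G$-map and $g \colon Y \to Y_1$ an $H$-map, since composition with each projection lands in a set with trivial action of the other factor. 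I therefore set $\Theta(X \times Y) := \Phi(X) \times \Psi(Y)$ on transitive objects, $\Theta(f,g) := (\Phi(f), \Psi(g))$ on morphisms, and extend to all of $\bS(G \times H)$ by coproducts. Exactness of $\Theta$ follows from exactness of $\Phi, \Psi$ combined with orbit decomposition (for products, via $(X_1 \times Y_1) \times (X_2 \times Y_2) \cong (X_1 \times X_2) \times (Y_1 \times Y_2)$, decomposition into orbits, and application of $\Phi, \Psi$); atomicity is exactly hypothesis $(\ast)$. By the remark preceding the proposition, $\Theta$ is fully faithful.

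Now let $Z$ be a quotient of $\Theta(X \times Y)$, corresponding to a $K$-stable equivalence relation $R \subset \Theta((X \times Y)^2)$. Because $\Theta$ preserves coproducts and sends transitive objects to transitive ones, $K$-subobjects of $\Theta(A)$ correspond bijectively to $G \times H$-subobjects of $A$ for any $A$, so $R = \Theta(R')$ for a unique $G \times H$-stable $R' \subset (X \times Y)^2$. The reflexivity, symmetry, and transitivity axioms transfer between $R$ and $R'$ since they are formulated using finite limits and the diagonal, which $\Theta$ preserves and reflects. Hence $T := (X \times Y)/R'$ is a transitive $G \times H$-set, and right exactness of $\Theta$ gives $Z = \Theta(T)$.

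Finally, the split property applied to $T$ yields an isomorphism $T \cong X' \times Y'$ with $X'$ transitive in $\bS(G)$ and $Y'$ transitive in $\bS(H)$. The quotient map $X \times Y \to X' \times Y'$ is a pair $(\alpha, \beta)$ of a $G$-map and an $H$-map by the morphism factorization noted above, and its surjectivity forces $\alpha, \beta$ to be surjective (compose with each projection). Therefore $X'$ and $Y'$ are quotients of $X$ and $Y$, and $Z = \Theta(X' \times Y') = \Phi(X') \times \Psi(Y')$, as required. The main technical hurdle is setting up $\Theta$ and verifying its exactness on non-transitive objects; once $\Theta$ is in hand, the rest is a formal transfer argument, with split used a second time to factor the transitive quotient $T$.
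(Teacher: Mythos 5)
Your proof is correct and follows essentially the same route as the paper: you package $\Phi$ and $\Psi$ into an exact atomic functor $\Theta \colon \bS(G \times H) \to \bS(K)$, transfer the equivalence relation along $\Theta$ to realize $Z$ as $\Theta$ of a quotient of $X \times Y$, and then invoke the split property to factor that quotient. The paper isolates the two transfer steps as Lemmas~\ref{lem:split-quot-1} and~\ref{lem:split-quot-2}, but the content is the same; the extra detail you give about morphisms between products and the surjectivity of the component maps is left implicit in the paper.
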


We require some lemmas before proving the proposition.

\begin{lemma} \label{lem:split-quot-1}
In the setting of Proposition~\ref{prop:split-quot}, there exists an exact atomic functor
\begin{displaymath}
\Theta \colon \bS(G \times H) \to \bS(K)
\end{displaymath}
with a functorial isomorphism
\begin{displaymath}
\Theta(X \times Y) = \Phi(X) \times \Psi(Y)
\end{displaymath}
for $X \in \bS(G)$ and $Y \in \bS(H)$.
\end{lemma}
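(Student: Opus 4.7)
The plan is to exploit splitness of $G$ to put every transitive $G \times H$-set canonically into the form $X \times Y$, define $\Theta$ on transitives by the required formula, and then extend additively. Concretely, for a transitive $Z \in \bS(G \times H)$, set $X(Z) := H \backslash Z$ (a transitive $G$-set) and $Y(Z) := G \backslash Z$ (a transitive $H$-set), and consider the tautological $(G \times H)$-equivariant map $\pi_Z \colon Z \to X(Z) \times Y(Z)$. Writing $Z = (G \times H)/W$ for an open subgroup $W$, condition~(b) of Proposition~\ref{prop:split} gives $W = U \times V$, so $X(Z) \cong G/U$, $Y(Z) \cong H/V$, and $\pi_Z$ is an isomorphism. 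Functoriality of $X(-)$ and $Y(-)$ further shows that every $(G \times H)$-equivariant map $f \colon Z_1 \to Z_2$ of transitive sets is the product $f = f_G \times f_H$ of induced maps $f_G \colon X(Z_1) \to X(Z_2)$ and $f_H \colon Y(Z_1) \to Y(Z_2)$ under the identifications $\pi_{Z_i}$.

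I would then define $\Theta$ on transitive objects by $\Theta(Z) := \Phi(X(Z)) \times \Psi(Y(Z))$ and on morphisms of transitives by $\Theta(f) := \Phi(f_G) \times \Psi(f_H)$, then extend additively to $\bS(G \times H)$: every object is a finite disjoint union of transitives, and a morphism out of a transitive source lands in a single orbit of the target, so this prescription gives a well-defined functor. By hypothesis $(\ast)$, $\Theta$ sends transitives to transitives, so it is atomic; it is additive by construction. The advertised natural isomorphism $\Theta(X \times Y) \cong \Phi(X) \times \Psi(Y)$ holds on transitive $X, Y$ by definition, and extends to arbitrary $X, Y$ because $\Phi$ and $\Psi$ commute with disjoint unions.

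It remains to verify exactness. By the criterion recorded just before the proposition, a left exact additive functor is exact iff it preserves surjections, and additive atomic functors preserve surjections automatically; so it suffices to show $\Theta$ is left exact. Preservation of the terminal object is immediate from left exactness of $\Phi$ and $\Psi$. For fibre products, I would reduce to the transitive case $Z_1 \to Z_3 \leftarrow Z_2$: using the product structure of objects and morphisms from the first step we get $Z_1 \times_{Z_3} Z_2 = (X_1 \times_{X_3} X_2) \times (Y_1 \times_{Y_3} Y_2)$ as $(G \times H)$-sets, and applying $\Theta$ while invoking additivity together with left exactness of $\Phi$ and $\Psi$ yields $\Theta(Z_1 \times_{Z_3} Z_2) \cong \Theta(Z_1) \times_{\Theta(Z_3)} \Theta(Z_2)$; additivity extends this to all objects. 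The only real content is the appeal to splitness in the first step; once every transitive object and every morphism between transitives is canonically a product, the remaining verifications are formal consequences of additivity and left exactness of $\Phi$ and $\Psi$.
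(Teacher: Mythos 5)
Your proof is correct and follows essentially the same path as the paper: use splitness of $G$ to identify every transitive $(G\times H)$-set with a product $X \times Y$, define $\Theta$ on transitives by $\Phi(X) \times \Psi(Y)$, extend additively, deduce atomicity from~$(\ast)$, and check left exactness on products of fibre products before invoking additivity. The only cosmetic difference is that you construct the inverse of the equivalence $\bT(G) \times \bT(H) \to \bT(G\times H)$ explicitly via the quotient maps $Z \mapsto (H\backslash Z, G\backslash Z)$, where the paper simply asserts the equivalence.
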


\begin{proof}
For a pro-oligomorphic group $L$, let $\bT(L)$ be the full subcategory of $\bS(L)$ spanned by the transitive $L$-sets. Since $G$ is split, we have an equivalence
\begin{displaymath}
\bT(G) \times \bT(H) \to \bT(G \times H), \qquad (X, Y) \mapsto X \times Y.
\end{displaymath}
We therefore have a functor
\begin{displaymath}
\Theta_0 \colon \bT(G \times H) \to \bS(K), \qquad \Theta_0(X \times Y) = \Phi(X) \times \Psi(Y),
\end{displaymath}
which admits a unique additive extension $\Theta$ to all of $\bS(G \times H)$. We clearly have a functorial isomorphism as stated, as one sees by decomposing $X$ and $Y$ into orbits, and it follows from $(\ast)$ that $\Theta$ is atomic. It remains to show that $\Theta$ is exact.

Suppose we have maps of $G$-sets $X_1, X_2 \to X$ and maps of $H$-sets $Y_1,Y_2 \to Y$. Then we have a natural isomorphism
\begin{displaymath}
(X_1 \times Y_1) \times_{X \times Y} (X_2 \times Y_2) = (X_1 \times_X X_2) \times (Y_1 \times_Y Y_2).
\end{displaymath}
Applying $\Theta$, we obtain natural isomorphisms
\begin{align*}
\Theta((X_1 \times Y_1) \times_{X \times Y} (X_2 \times Y_2))
&= \Phi(X_1 \times_X X_2) \times \Psi(Y_1 \times_Y Y_2) \\
&= (\Phi(X_1) \times_{\Phi(X)} \Phi(X_2)) \times (\Psi(Y_1) \times_{\Psi(Y)} \Psi(Y_2)) \\
&= (\Phi(X_1) \times \Psi(Y_1)) \times_{\Phi(X) \times \Psi(Y)} (\Phi(X_2) \times \Psi(Y_2)) \\
&= \Theta(X_1 \times Y_1) \times_{\Theta(X \times Y)} \Theta(X_2 \times Y_2),
\end{align*}
where we have made use of the left exactness of $\Phi$ and $\Psi$. We thus see that $\Theta$ preserves fiber products, at least for $(G \times H)$-sets that are products. The general case follows from decomposing into orbits (which are always products), and using the fact that $\Theta$ is additive. Since $\Theta$ also preserves final objects, it is left exact. Finally, since $\Theta$ is atomic, it preserves surjections, and is thus exact.
\end{proof}

\begin{lemma} \label{lem:split-quot-2}
Let $L$ and $K$ be pro-oligomorphic groups, let $\Theta \colon \bS(L) \to \bS(K)$ be an exact atomic functor, and let $X$ be an $L$-set. Then any quotient of $\Theta(X)$ has the form $\Theta(Y)$, for some quotient $Y$ of $X$.
\end{lemma}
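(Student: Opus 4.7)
My plan is to describe a quotient of $\Theta(X)$ via its kernel-pair equivalence relation, show that this equivalence relation descends from an equivalence relation on $X$, and then use right exactness to conclude. So let $q \colon \Theta(X) \to Z$ be a surjection of $K$-sets. Its kernel pair $R' = \Theta(X) \times_Z \Theta(X)$ is an equivalence relation on $\Theta(X)$, and since $\bS(K)$ behaves like ordinary sets, $Z$ is the coequalizer of the two projections $R' \rightrightarrows \Theta(X)$. Since $\Theta$ is left exact, $\Theta(X) \times \Theta(X) = \Theta(X \times X)$, so I can regard $R'$ as a sub-$K$-set of $\Theta(X \times X)$.

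The main step is to produce a sub-$L$-set $R \subset X \times X$ with $\Theta(R) = R'$. For this I use the listed combination of atomicity and additivity (the latter coming from exactness). Decompose $X \times X$ into its finitely many $L$-orbits, $X \times X = \coprod_{i \in I} O_i$. Since $\Theta$ is additive and atomic, $\Theta(X \times X) = \coprod_{i \in I} \Theta(O_i)$ is precisely the $K$-orbit decomposition, so every sub-$K$-set of $\Theta(X \times X)$ has the form $\Theta(\coprod_{i \in S} O_i)$ for some $S \subset I$. Taking $R = \coprod_{i \in S} O_i$ for the $S$ corresponding to $R'$, I next verify that $R$ is an equivalence relation on $X$. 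Each defining condition (reflexivity, symmetry, transitivity) is expressible via the diagonal $\Delta_X \hookrightarrow X \times X$, the swap $\sigma \colon X \times X \to X \times X$, and the composition $R \times_X R \to X \times X$; these constructions are all preserved by the left exact functor $\Theta$, so they are preserved by passage between $R$ and $R' = \Theta(R)$. Since $\Theta$ is fully faithful (an exact atomic functor automatically is, as stated at the top of \S\ref{ss:split}), inclusions in $\bS(K)$ of the form $\Theta(A) \subset \Theta(B)$ are reflected to inclusions $A \subset B$, and similarly for factorizations of morphisms. Thus reflexivity, symmetry, and transitivity for $R'$ each transfer back to $R$.

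Finally, let $Y = X/R$ be the coequalizer of $R \rightrightarrows X$ in $\bS(L)$. Since $\Theta$ is right exact, applying $\Theta$ produces $\Theta(Y)$ as the coequalizer of $\Theta(R) \rightrightarrows \Theta(X)$, which is exactly $Z$ by construction. Hence $Z \cong \Theta(Y)$ with $Y$ a quotient of $X$, completing the proof. The main obstacle I anticipate is the bookkeeping in the middle paragraph: carefully articulating that the subobject lattice of $\Theta(X \times X)$ in $\bS(K)$ matches that of $X \times X$ in $\bS(L)$ under $\Theta$, and that the three equivalence-relation conditions both push forward along and are reflected by $\Theta$. Everything else is formal from the properties of $\Theta$ already established.
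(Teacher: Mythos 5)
Your proof is correct and follows essentially the same route as the paper: express $R'$ as a sub-$K$-set of $\Theta(X \times X) = \Theta(X) \times \Theta(X)$, use atomicity plus additivity to descend it to a unique sub-$L$-set $R \subset X \times X$, check $R$ is an equivalence relation, and then apply exactness of $\Theta$ to the quotient. You have simply expanded the two steps the paper leaves to the reader (the subobject bijection via orbit decomposition, and the verification that $R$ inherits the equivalence-relation axioms via full faithfulness).
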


\begin{proof}
A quotient of $\Theta(X)$ has the form $\Theta(X)/R'$, where $R'$ is an equivalence relation on $\Theta(X)$. Since $\Theta$ is atomic, it induces a bijection between $L$-subsets of $X \times X$ and $K$-subsets of $\Theta(X \times X) = \Theta(X) \times \Theta(X)$. Thus $R'$ has the form $\Theta(R)$, for a unique $L$-subset $R$ of $X \times X$. One readily verifies that $R$ is an equivalence relation on $X$. Since $\Theta$ is exact, we have $\Theta(X/R)=\Theta(X)/\Theta(R)$, as required.
\end{proof}

We are now ready to prove the proposition.

\begin{proof}[Proof of Proposition~\ref{prop:split-quot}]
Let  $\Theta \colon \bS(G \times H) \to \bS(K)$ be the exact atomic functor provided by Lemma~\ref{lem:split-quot-1}. Then $Z$ is a quotient of $\Theta(X \times Y)$. By Lemma~\ref{lem:split-quot-2}, we see that $Z=\Theta(Z')$, where $Z'$ is a quotient of $X \times Y$ as a $(G \times H)$-set. Since $G$ is split, $Z'$ has the form $X' \times Y'$, where $X'$ is a quotient of $X$ and $Y'$ is a quotient of $Y$. We thus have
\begin{displaymath}
Z = \Theta(X' \times Y') = \Phi(X') \times \Psi(Y'),
\end{displaymath}
as required.
\end{proof}

\subsection{The oligomorphic fundamental group} \label{ss:oligo-fund}

Let $\fT$ be a pre-Tannakian category. We assume that $k$ is algebraically closed for the remainder of the paper. Let $\Et(\fT)$ be the category of \'etale algebras in $\fT$ (with morphisms being algebra homomorphisms). By \cite[Theorem~6.1]{discrete}, $\Et(\fT)^{\op}$ is a pre-Galois category, and so by the main theorem of \cite{pregalois}, it has the form $\bS(G)$ for some pro-oligomorphic group $G$.

\begin{definition}
The \defn{oligomorphic fundamental group} of $\fT$ is any pro-oligomorphic group $G$ such that $\bS(G)$ is equivalent to $\Et(\fT)^{\op}$.
\end{definition}

\begin{remark}
The group $G$ above is not uniquely defined. For instance, let $\fS^{(1)}$ be the group of all permutations of a countably infinite set, let $\fS^{(2)}$ be the subgroup consisting of finitary permutations, and let $\fS^{(3)}$ be the group of all permutations of an uncountable set. These three groups are pairwise non-isomorphic, but the three categories $\bS(\fS^{(i)})$ are all equivalent. This non-uniqueness will not cause an issue in what follows.
\end{remark}

\begin{remark}
Suppose $\fT$ has the form $\uRep(G, \mu)$ for some pro-oligomorphic group $G$ with measure $\mu$. Then every $G$-set $X$ gives an \'etale algebra $\cC(X)$ in $\fT$, and so there is a functor $\bS(G) \to \Et(\fT)^{\op}$. This functor is often an equivalence, e.g., it is for the Delannoy category by Theorem~\ref{mainthm}. However, it can fail to be an equivalence. An example is provided by the infinite general linear group over a finite field equipped with the parabolic topology; see \cite{interp} for details.
\end{remark}

Fix $\fT$ and let $G$ be its oligomorphic fundamental group. For a $G$-set $X$, we write $\cA(X)$ for the corresponding \'etale algebra of $\fT$. The functor $X \mapsto \cA(X)$ is contravariant and fully faithful, by definition. A map $Y \to X$ of $G$-sets is surjective (resp.\ injective) if and only if the corresponding map $\cA(X) \to \cA(Y)$ is injective (resp.\ surjective) in the abelian category $\fT$ (i.e., has vanishing kernel or co-kernel). For $G$-sets $X$ and $Y$, we have natural algebra isomorphisms
\begin{displaymath}
\cA(X) \oplus \cA(Y) = \cA(X \amalg Y), \qquad
\cA(X) \otimes \cA(Y) = \cA(X \times Y).
\end{displaymath}
The algebra $\cA(X)$ is simple if and only if the $G$-set $X$ is transitive. More generally, the invariant algebra of $\cA(X)$ has the form $k^n$, where $n$ is the number of $G$-orbits on $X$. These statements all follow from the construction and the basic properties of \'etale algebras in \S \ref{ss:etale}.

\subsection{Back to Delannoy}

We now return to the Delannoy category $\uRep(\GG)$. We begin with a group-theoretic observation.

\begin{proposition} \label{prop:del-split}
The group $\GG=\Aut(\bR, <)$ is split in the sense of \S \ref{ss:split}.
\end{proposition}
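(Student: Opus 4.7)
The plan is to verify one of the equivalent conditions of Proposition~\ref{prop:split}. The cleanest route is condition (c): every open subgroup of $\GG$ equals its own normalizer. All the necessary ingredients have already been assembled in \S \ref{ss:delgp}.

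First I would recall from \S \ref{ss:delgp} that the open subgroups of $\GG$ are exactly the pointwise stabilizers $\GG(A)$ of finite subsets $A \subset \bR$, and that $\GG(A) = \GG(B)$ forces $A = B$ (since $\GG(A) \subset \GG(B)$ is equivalent to $B \subset A$). Next, for any $g \in \GG$ and finite $A \subset \bR$, conjugation gives
\begin{displaymath}
g\, \GG(A)\, g^{-1} = \GG(g(A)),
\end{displaymath}
because an element fixes each point of $A$ if and only if its conjugate by $g$ fixes each point of $g(A)$.

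Then I would take $g \in \rN_{\GG}(\GG(A))$. The identity above gives $\GG(g(A)) = \GG(A)$, whence $g(A) = A$ as sets. But $g$ is order-preserving and $A$ is finite, so $g$ must fix each element of $A$, i.e., $g \in \GG(A)$. Hence $\rN_{\GG}(\GG(A)) = \GG(A)$, and condition (c) of Proposition~\ref{prop:split} is verified, so $\GG$ is split.

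There is no real obstacle: the argument is immediate from the classification of open subgroups. As a sanity check, one could equally well verify condition (d), since the already-noted fact in \S \ref{ss:delgp} that the only $\GG$-equivariant maps $\bR^{(n)} \to \bR^{(n)}$ come from order-preserving self-injections of $\{1,\ldots,n\}$ — of which there is only the identity — says precisely that $\Aut(\bR^{(n)})$ is trivial for every $n$.
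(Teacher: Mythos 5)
Your proof is correct. You verify condition (c) of Proposition~\ref{prop:split} directly, using the classification of open subgroups from \S\ref{ss:delgp}: every open subgroup is $\GG(A)$, conjugation sends $\GG(A)$ to $\GG(g(A))$, and an order-preserving bijection of $\bR$ that permutes a finite set $A$ must fix it pointwise. The paper instead verifies condition (d): it observes that $(1,\ldots,n)$ is the \emph{unique} fixed point of $\GG(\{1,\ldots,n\})$ acting on $\bR^{(n)}$, so any $\GG$-automorphism of $\bR^{(n)}$ fixes it and is therefore the identity. These are two phrasings of essentially the same observation (a $\GG$-map $\bR^{(n)}\to\bR^{(n)}$ corresponds to a monotone self-injection of $[n]$, which must be the identity), translated through Proposition~\ref{prop:split}; your route is slightly more group-theoretic, the paper's slightly more combinatorial, and you note the paper's route yourself in your final paragraph. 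Neither buys anything over the other here.
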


\begin{proof}
Consider the point $x=(1,\ldots,n)$ in $\bR^{(n)}$. The stabilizer of $x$ is $\GG(A)$, where $A=\{1,\ldots,n\}$. The point $x$ is the only fixed point of $\GG(A)$ on $\bR^{(n)}$. It follows that any $\GG$-map $\bR^{(n)} \to \bR^{(n)}$ maps $x$ to itself, and is therefore the identity; hence the automorphism group of $\bR^{(n)}$ is trivial. Since these account for all transitive $\GG$-sets, it follows that $\GG$ is split (Proposition~\ref{prop:split}).
\end{proof}

We next translate the above result to an analogous result about \'etale algebras. Let $\fT$ be a pre-Tannakian category.

\begin{proposition} \label{prop:split-etale-alg}
Let $A$ be a simple \'etale algebra in $\uRep(\GG) \boxtimes \fT$. Suppose there is an algebra homomorphism $ A \to \cC(\bR^{(n)}) \boxtimes E$, where $E$ is a simple \'etale algebra in $\fT$. Then $A$ is isomorphic to $\cC(\bR^{(m)}) \boxtimes E'$ for some $m \le n$ and simple \'etale subalgebra $E'$ of $E$.
\end{proposition}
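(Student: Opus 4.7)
The first move is to observe that the given algebra homomorphism $A \to B := \cC(\bR^{(n)}) \boxtimes E$ must be injective, since $A$ is simple and the map is unital. So $A$ sits as a subalgebra of $B$, and moreover $B$ is itself a simple étale algebra: it is étale as a tensor product of étale algebras, and \S\ref{ss:etale}(d) gives $\Gamma(B) = \Gamma(\cC(\bR^{(n)})) \otimes_k \Gamma(E) = k$ (using that $\cC(\bR^{(n)})$ is absolutely simple, since $\GG$ is transitive on $\bR^{(n)}$).

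The plan is to translate this subalgebra situation into a quotient problem for $L$-sets and then invoke Proposition~\ref{prop:split-quot}, with splitness supplied by Proposition~\ref{prop:del-split}. Let $L$ and $K$ denote the oligomorphic fundamental groups of $\uRep(\GG) \boxtimes \fT$ and of $\fT$, with corresponding equivalences $\cA_L$ and $\cA_K$. Using Theorem~\ref{mainthm}, which identifies the oligomorphic fundamental group of $\uRep(\GG)$ with $\GG$ itself (under the equivalence $X \mapsto \cC(X)$), I would check that the tensor functors $-\boxtimes \bone$ and $\bone \boxtimes -$ induce covariant, exact, additive functors
\[
\Phi\colon \bS(\GG) \to \bS(L), \qquad \Psi\colon \bS(K) \to \bS(L),
\]
characterized by $\cA_L(\Phi(X)) = \cC(X) \boxtimes \bone$ and $\cA_L(\Psi(Y)) = \bone \boxtimes \cA_K(Y)$. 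Exactness would follow from the fact that external product preserves direct products of algebras (hence disjoint unions of $G$-sets), tensor products of algebras (hence products of $G$-sets), and subalgebras (hence quotient $G$-sets).

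Next I would verify condition~$(\ast)$ of Proposition~\ref{prop:split-quot}: for transitive $X \in \bS(\GG)$ and transitive $Y \in \bS(K)$, the algebra $\cC(X) \boxtimes \cA_K(Y)$ is simple by \S\ref{ss:etale}(d), using that $\cC(X)$ is absolutely simple (it equals some $\cC(\bR^{(m)})$ by Theorem~\ref{mainthm}, whose invariant ring is $k$) and that $\cA_K(Y)$ is simple. Hence $\Phi(X) \times \Psi(Y)$ is a transitive $L$-set, as required. Now let $Y_0$ be the transitive $K$-set with $\cA_K(Y_0) = E$, so that $B$ corresponds to $S := \Phi(\bR^{(n)}) \times \Psi(Y_0)$, and let $Z$ be the transitive $L$-set with $\cA_L(Z) = A$. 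The inclusion $A \hookrightarrow B$ corresponds to a surjection $S \twoheadrightarrow Z$ of $L$-sets, and Proposition~\ref{prop:split-quot} then produces quotients $X'$ of $\bR^{(n)}$ in $\bS(\GG)$ and $Y'$ of $Y_0$ in $\bS(K)$ with $Z \cong \Phi(X') \times \Psi(Y')$. Since every $\GG$-quotient of $\bR^{(n)}$ is some $\bR^{(m)}$ with $m \le n$ (by the classification of transitive $\GG$-sets, as in the proof of Proposition~\ref{prop:schwartz-sub}), translating back yields $A \cong \cC(\bR^{(m)}) \boxtimes E'$, where $E' := \cA_K(Y')$ is a simple étale subalgebra of $E$.

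The main obstacle here is the bookkeeping around the comparison functors $\Phi$ and $\Psi$ between the algebra side and the $G$-set side, and in particular verifying condition~$(\ast)$; once these pieces are in place, the result is essentially a direct application of Proposition~\ref{prop:split-quot} together with the splitness of $\GG$.
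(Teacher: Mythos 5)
Your proposal is correct and follows essentially the same route as the paper: pass to oligomorphic fundamental groups via Theorem~\ref{mainthm}, set up the exact functors $\Phi$ and $\Psi$ on categories of $G$-sets using the external product of \'etale algebras, verify condition~$(\ast)$ via \S\ref{ss:etale}(d), and invoke Proposition~\ref{prop:split-quot} with splitness of $\GG$ supplied by Proposition~\ref{prop:del-split}. The only differences are notational (your $L,K$ versus the paper's $K,H$) and a harmless extra observation that $B$ itself is simple \'etale.
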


\begin{proof}
Let $H$ and $K$ be the oligomorphic fundamental groups of $\fT$ and $\uRep(\GG) \boxtimes \fT$. Note that $\GG$ is the oligomorphic fundamental group of $\uRep(\GG)$ by Theorem~\ref{mainthm}; for a $\GG$-set $X$, we write $\cC(X)$ in place of $\cA(X)$. If $E$ is an \'etale algebra in $\fT$ then $\bone \boxtimes E$ is an \'etale algebra in $\uRep(\GG) \boxtimes \fT$. Moreover, the functor $E \mapsto \bone \boxtimes E$ preserves injections, surjections, and simple algebras. It follows that we have an exact atomic functor
\begin{displaymath}
\Psi \colon \bS(H) \to \bS(K)
\end{displaymath}
satisfying $\cA(\Psi(Y)) = \bone \boxtimes \cA(Y)$. Similarly, we have an exact atomic functor
\begin{displaymath}
\Phi \colon \bS(\GG) \to \bS(K)
\end{displaymath}
satisfying $\cA(\Phi(X)) = \cC(X) \boxtimes \bone$. If $E$ is a simple \'etale algebra in $\fT$ then $\cC(\bR^{(n)}) \boxtimes E$ is a simple \'etale algebra in $\uRep(\GG) \boxtimes \fT$ by \S \ref{ss:etale}(d). It follows that if $X$ is a transitive $\GG$-set and $Y$ is a transitive $H$-set then $\Phi(X) \times \Psi(Y)$ is a transitive $K$-set.

Put $X=\bR^{(n)}$, let $Y$ be the transitive $H$-set with $E=\cA(Y)$, and let $Z$ be the transitive $K$-set with $A=\cA(Z)$. The injection $A \to \cC(\bR^{(n)}) \boxtimes E$ translates to a surjection $\Phi(X) \times \Psi(Y) \to Z$ of $K$-sets. By Proposition~\ref{prop:split-quot}, we have $Z=\Phi(X') \times \Psi(Y')$, where $X'$ is a quotient of $X$ and $Y'$ is a quotient of $Y$. Thus $A=\cC(X') \boxtimes E'$, where $E'=\cA(Y')$. Note that $X'$ has the form $\bR^{(m)}$ for some $m \le n$, by the classification of transitive $\GG$-sets. The result thus follows.
\end{proof}

Theorem~\ref{mainthm2} now follows easily. Indeed, let $A$ be a simple \'etale algebra in $\uRep(\GG) \boxtimes \fT$. By Corollary~\ref{cor:key}, $A$ is a subalgebra of some $\cC(\bR^{(n)}) \boxtimes E$, where $E$ is a simple \'etale algebra in $\fT$. Thus, by Proposition~\ref{prop:split-etale-alg}, $A$ is isomorphic to $\cC(\bR^{(m)}) \boxtimes E'$, as required.

\end{document}